\documentclass[letterpaper,11pt, leqno]{amsart}

\usepackage[margin=1.2in, marginparwidth=1.5cm, marginparsep=0.5cm]{geometry}
\usepackage{amsmath,amssymb,amsthm,mathtools}
\usepackage{xcolor}
\usepackage{needspace}

\usepackage{tikz}
\usetikzlibrary{decorations.pathreplacing,arrows.meta}

\usepackage[implicit=true,colorlinks=true,
  linkcolor=blue,citecolor=blue,urlcolor=blue]{hyperref}
\hypersetup{
  pdftitle={Lp maximal inequalities for Gaussian subordination
    with applications to Breuer--Major--Donsker principles},
  pdfauthor={Dionysis Milesis and Guangqu Zheng}
}

\allowdisplaybreaks[2]
\theoremstyle{plain}
\newtheorem{theorem}{Theorem}[section]
\newtheorem{lemma}[theorem]{Lemma}
\newtheorem{proposition}[theorem]{Proposition}
\newtheorem{corollary}[theorem]{Corollary}
\theoremstyle{remark}
\newtheorem{remark}[theorem]{Remark}

\newcommand{\E}{\mathbb{E}}
\newcommand{\PP}{\mathbb{P}}
\newcommand{\R}{\mathbb{R}}
\newcommand{\Z}{\mathbb{Z}}

\newcommand{\cF}{\mathcal{F}}
\newcommand{\cN}{\mathcal{N}}
\newcommand{\cW}{\mathcal{W}}
\newcommand{\ind}{\mathbf{1}}

\newcommand{\norm}[1]{\|#1\|}
\newcommand{\al}{\alpha}
\newcommand{\be}{\beta}
\newcommand{\dl}{\delta}
\newcommand{\dr}{\theta}
\newcommand{\eps}{\varepsilon}
\newcommand{\ga}{\gamma}
\newcommand{\ld}{\lambda}
\newcommand{\s}{\sigma}
\newcommand{\wt}{\widehat}

\newcommand{\LRA}{\Longrightarrow}

\newcommand{\cZ}{\mathcal Z}

\DeclareMathOperator{\Var}{Var}

\newcommand{\noi}{\noindent}

\newcommand{\AND}{\quad{\rm and}\quad}

\newcounter{pfstep}
\renewcommand{\thepfstep}{S\arabic{pfstep}}
\newcommand{\pstep}[2]{%
  \par\smallskip\Needspace{3\baselineskip}%
  \refstepcounter{pfstep}\label{#1}%
  \noindent\textbf{(\thepfstep) #2.}\ }

\numberwithin{equation}{section}

 \makeatletter
\@namedef{subjclassname@2020}{\textup{2020 Mathematics Subject
Classification}}
\makeatother

\subjclass[2020]{Primary 60E15, 60F17;
Secondary 60G15, 60G10, 60G17}
\keywords{maximal inequality,  Breuer--Major theorem, tightness, Donsker invariance principle,
stationary Gaussian process, Hermite rank,
 diagram formula, self-similar Gaussian
process}

\begin{document}

\title[Maximal inequalities and BMD principles]
{$L^p$ maximal inequalities for Gaussian subordination\\
with applications to Breuer--Major--Donsker principles}

\author[Dionysis Milesis]{Dionysis Milesis}
\address{Department of Mathematics and Statistics, Boston University,
665 Commonwealth Avenue, Boston, MA 02215, USA}
\email{dmilesis@bu.edu}

\author[Guangqu Zheng]{Guangqu Zheng}
\address{Department of Mathematics and Statistics, Boston University,
665 Commonwealth Avenue, Boston, MA 02215, USA}
\email{gzheng90@bu.edu}

\date{\today}

\begin{abstract}
In this paper, we establish an   $L^2$ maximal inequality for
partial sums of Gaussian-subordinated random variables.
More precisely, let $f:\mathbb{R}\to\mathbb{R}$ belong to 
$L^2(\gamma)$ and have Hermite rank at least $d$,
where $\gamma$ denotes the standard Gaussian measure.
If the covariance matrix of the underlying Gaussian family,
with standard normal marginals, satisfies a uniform
$d$th-power row bound $K$, then the $L^2$ norm of the maximal
partial sum is bounded by $C_{d,K}\sqrt n\,\|f\|_{L^2(\gamma)}$.
The constant $C_{d,K}$ is independent of $(f,n)$ and depends
on the covariance structure only through $K$.

The proof combines rank-preserving truncation, separated diagram
estimates, a low--high chaos decomposition of the squared
truncation, and a key adjacent-block maximal lemma.  
 We also give the
corresponding $L^p$ inequalities for $2<p<\infty$, using the
Malliavin--Meyer argument of Nourdin and Nualart
(Probab. Theory Related Fields, 2020) and dyadic summation.
The $L^2$ case is critical for this dyadic argument: the geometric
decay available for $p>2$ disappears at $p=2$, where standard
dyadic summation produces a logarithmic loss.

As a consequence, our  $L^2$ maximal inequality controls Hermite
tails uniformly in the path norm.  In discrete time, this yields
the Breuer--Major--Donsker principle under the sole finite-variance
assumption, removing both the additional $L^{2+}$-integrability
assumption of Nourdin and Nualart
(Probab. Theory Related Fields, 2020) and the prediction-theoretic
or decimation assumptions of Mansanarez, Poly, and Zheng
(arXiv:2607.11469).  In continuous time, we likewise obtain the
Breuer--Major--Donsker principle under finite variance alone,
removing the additional moment assumption of Campese, Nourdin,
and Nualart (Ann. Probab., 2020).  For the nonstationary
self-similar setting considered there, we remove the same extra
moment assumption in the subcritical regime, while a separate
leading-chaos argument yields the corresponding logarithmically
normalized critical limit. 
\end{abstract}

\maketitle

\tableofcontents

\section{Introduction and main results}
\label{intro}

This paper investigates $L^p$ maximal inequalities for partial-sum
processes of dependent random variables obtained by Gaussian
subordination:  $X_i= f(G_i)$, $i\in\mathcal I$, where
$(G_i)_{i\in\mathcal I}$ is a jointly Gaussian family and $f$
is measurable.  Our main contribution is the critical case
$p=2$, where we establish a finite-variance maximal inequality that removes the
logarithmic loss arising in the usual dyadic argument and retains  the
optimal $\sqrt n$ scale.  This estimate controls Hermite tails
uniformly in the path norm and yields the
Breuer--Major--Donsker (BMD) principles under the finite-variance
assumption alone.  In the corresponding stationary BMD results,
this removes the extra Hermite-coefficient assumptions of
Ben Hariz \cite{BH02}, the moment assumptions of Nourdin and
Nualart \cite{NN20} and Campese, Nourdin, and Nualart
\cite{CNN20}, and the prediction-theoretic or decimation
assumptions of Mansanarez, Poly, and the second author
\cite{MPZ26}.  We also treat the self-similar setting of \cite{CNN20},
removing the additional moment assumption there as well. 
 The higher-moment inequalities for $p>2$
complete our treatment of the maximal inequalities in this setting.

We work on a probability space $(\Omega,\cF,\PP)$ and write
$\E$ for expectation.  For $X\in L^p(\Omega)$ with $p\in[1,\infty)$,
we write
$\|X\|_p=(\E|X|^p)^{1/p}$.
Let $\ga$ denote the standard Gaussian measure on $\R$ and write
$\|f\|_p=\|f\|_{L^p(\ga)}$ when there is no ambiguity.
We use $\cZ\sim\cN(0,1)$ and $\cW$ for standard Brownian motion.
The letter $C$ denotes a positive finite constant
whose value may change from line to line; relevant parameter
dependence is specified by subscripts.
For a finite set $I$, we write $|I|$ for its cardinality.
The symbol $\ind_E$ denotes an indicator.

\subsection{\texorpdfstring{$L^p$}{Lp} maximal inequalities
for Gaussian subordination}
\label{max_result}

Maximal inequalities are fundamental tools in probability theory,
with important applications to laws of large numbers, tightness,
and uniform approximation of stochastic processes.
  For independent centered square-integrable random
variables $X_1,\ldots,X_n$, put $S_m=\sum_{k=1}^mX_k$ and
$S_n^*=\max_{0\le m\le n}|S_m|$.  Kolmogorov's maximal inequality
gives the tail bound
\begin{align*}
 \PP(S_n^*>\ld)
 \le\frac{1}{\ld^2}\sum_{k=1}^n\E[X_k^2],\quad \ld>0,
\end{align*}
while Doob's inequality, applied to the partial-sum martingale,
gives the strong second-moment estimate
\begin{align*}
 \E[(S_n^*)^2]\le4\E[S_n^2]
 =4\sum_{k=1}^n\E[X_k^2].
\end{align*}
For Kolmogorov's maximal inequality and Doob's $L^p$ maximal
inequality, see \cite[Theorems~5.3.1 and~9.5.4]{Chu01},
respectively.
See also the survey \cite{KS23} of
K\"uhn and Schilling  
for a  broad overview of
maximal inequalities and their applications.

More generally, for a martingale $(M_k)_{k=0}^n$ with
$M_n\in L^p(\Omega)$, Doob's $L^p$ inequality states that
 $\|\max_{0\le k\le n}|M_k| \|_p
 \le\frac{p}{p-1}\|M_n\|_p$, $1<p<\infty.$
For independent centered $X_k\in L^p(\Omega)$ and $p>2$,
Rosenthal's inequality \cite{Ros70} gives
\begin{align*}
 \left\|\sum_{k=1}^n X_k\right\|_p
 \le C_p\left[
 \left(\sum_{k=1}^n\|X_k\|_2^2\right)^{\frac12}
 +
 \left(\sum_{k=1}^n\|X_k\|_p^p\right)^{\frac1p}
 \right].
\end{align*}
Applying Doob's inequality to the partial-sum martingale then yields
the bound for 
 $\|S_n^*\|_p$.
In particular, for identically distributed summands this is bounded
by $C_p\sqrt n\,\|X_1\|_p$.  Thus finite variance is sufficient
for the natural $\sqrt n$   scale at $p=2$, while higher
maximal moments are controlled by the corresponding moments of the
summands.

For dependent sums, maximal estimates reflect both the moment
assumptions and the dependence structure.
M\'oricz \cite{Mor76} converts uniform block moment bounds with
a strictly superlinear length exponent into maximal bounds of
the same order.  At the second-moment boundary, a direct dyadic
argument based on  $\E|S_I|^2\le C|I|$,
where $S_I = \sum_{i\in I} X_i$,
incurs a logarithmic
loss in the maximal $L^2$ norm.
 The same logarithm 
appears in the orthogonal-series inequalities
of Menchoff and Rademacher \cite{Men23,Rad22}
and   Salem's argument \cite{Sal41}; 
see also  \cite[pp.~193--194]{Zyg59}.

A different approach is the strong $L^2$ projective inequality
of Peligrad and Utev \cite[Proposition~2.3]{PU05}.
For a centered stationary sequence with partial sums $S_m$,
their bound involves the projective quantity
\begin{align*}
 \Delta_r
 =
 \sum_{j=0}^{r-1}
 2^{-j/2}
 \|\E[S_{2^j}  | \mathcal{F}_0]\|_2,
 \quad 2^{r-1}<n\le2^r,
\end{align*}
and gives
$
 \left\|\max_{k\le n}|S_k|\right\|_2
 \le
 3\sqrt{n}  (\|X_1\|_2+\Delta_r ).
$
If one uses the linear block-variance bound
$\|S_m\|_2^2\le Cm$, then
$2^{-j/2}  \|\E[S_{2^j} | \mathcal F_0]\|_2 \leq C^{1/2}$
at every dyadic scale.  Hence
$\Delta_r\lesssim r\asymp\log n$, yielding only
\begin{align*}
 \left\|\max_{k\le n}|S_k|\right\|_2
 \lesssim \sqrt n\,\log(2n).
\end{align*}
One may remove the logarithm-term in  the projective inequality 
only when the conditional block sums satisfy additional summable decay;
such decay does not follow from the covariance row bound  alone.
For a systematic treatment of such methods for
dependent sequences, see Merlev\`ede, Peligrad, and Utev
\cite{MPU19}.

Gaussian subordination offers a particularly concrete description
of dependence through summability or row-wise control of covariance
powers, with the relevant power dictated by the Hermite rank.
Bardet and Surgailis \cite{BS13a} establish moment bounds for
nonstationary Gaussian-subordinated arrays under covariance-power
row conditions.  The Malliavin--Meyer argument of Nourdin and
Nualart \cite{NN20} controls increments under an $L^p(\ga)$
assumption with $p>2$, and 
Ben Hariz and Bouzebda \cite{BHB25}
prove a Rosenthal-type maximal inequality for stationary Gaussian
subordination, with a refined $L^4$ estimate separating second-
and fourth-moment contributions.
These results inspire the moment and
truncation methods used below.

\medskip
\noi
$\bullet$ {\bf Our framework.}
We now specify the framework, with particular emphasis on the
finite-variance endpoint $p=2$.
Since Gaussian subordination is
naturally analyzed through the orthogonal decomposition of
$L^2(\ga)$ into Hermite chaoses, Hermite expansions will play a
fundamental role throughout the paper.  Recall that the Hermite
polynomials are defined by Rodrigues' formula

\noi
\begin{align*}
 H_q(x)=(-1)^q e^{x^2/2}
 \frac{d^q}{dx^q}\bigl(e^{-x^2/2}\bigr),\quad q\ge0.
\end{align*}
Thus $H_0(x)=1$, $H_1(x)=x$, $H_2(x)=x^2-1$, and
$H_{q+1}(x)=xH_q(x)-qH_{q-1}(x)$ for $q\ge1$.
The normalized polynomials
\begin{align}\label{def_hq}
 h_q=\frac{H_q}{\sqrt{q!}},\quad q\ge0,
\end{align}
form an orthonormal basis of $L^2(\ga)$; see
\cite[Section~1.4]{NP12}.  A function $\varphi\in L^2(\ga)$
has Hermite rank $d\ge1$ if
\begin{align}\label{d_phi}
 \varphi=\sum_{q=d}^{\infty}c_qH_q,
 \quad c_d\ne0,
 \quad \|\varphi\|_2^2
 =\sum_{q=d}^{\infty}q!c_q^2<\infty.
\end{align}
In particular, $\varphi$ is centered under $\ga$.
We say that $\varphi$ has Hermite rank at least $d$ if its
coefficients of orders $0,\ldots,d-1$ vanish; this convention
includes the zero function.
For jointly Gaussian $X,Y$ with standard normal marginals and
$\E[XY]=\rho$, we will use the orthogonality relation
\begin{align}\label{OL_pq}
 \E[H_p(X)H_q(Y)]
 =p!\,\rho^p\,\ind_{\{p=q\}};
\end{align}
see \cite[Proposition~2.7.5 and Theorem~2.7.7]{NP12}.

\medskip

We now state our main result, whose proof is given in
Sections~\ref{lp_sec} and \ref{ns_sec}.

\begin{theorem}[\textsf{$L^2$ endpoint and $L^p$ maximal inequalities}]
\label{ns_max}
Fix an integer $d\ge1$.
Let $Z_1,\ldots,Z_n$ be jointly Gaussian with standard normal
marginals, and suppose that
\begin{align}\label{ns_rows}
 \max_{1\le i\le n}
 \sum_{j=1}^n|\E[Z_iZ_j]|^d\le K<\infty.
\end{align}
For every $f\in L^2(\ga)$ with Hermite rank at least $d$,
\begin{align}\label{ns_l2}
 \E\left[\max_{0\le m\le n}
 \left|\sum_{k=1}^m f(Z_k)\right|^2\right]
 \le C_{d,K}n\|f\|_2^2.
\end{align}
More generally, for every $2<p<\infty$ and every
$f\in L^p(\ga)$ with Hermite rank at least $d$,
\begin{align}\label{ns_bdd}
 \left\|\max_{0\le m\le n}
 \left|\sum_{k=1}^m f(Z_k)\right|\right\|_p
 \le C_{p,d,K}\sqrt n\,\|f\|_p.
\end{align}

\end{theorem}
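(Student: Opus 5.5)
We would treat the two inequalities separately, starting with the simpler case $p>2$ and then handling the critical endpoint $p=2$.

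\textbf{The case $2<p<\infty$.} Here we plan to first prove a block moment bound: for every interval $I\subset\{1,\dots,n\}$,
\begin{align*}
\Bigl\|\sum_{k\in I}f(Z_k)\Bigr\|_p\le C_{p,d,K}\,|I|^{1/2}\|f\|_p .
\end{align*}
We would get it from the Malliavin--Meyer argument of Nourdin and Nualart. Write $f=\delta^d(T f)$, using that $f$ has Hermite rank at least $d$. Meyer's inequalities then reduce the $L^p$ norm of the block sum to the $L^p$ norm of a contraction-type quantity. The covariance row bound \eqref{ns_rows} controls that quantity by $|I|^{1/2}$, and the operator norm of the shift on $L^p(\ga)$ controls it by $\|f\|_p$. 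Since $p>2$, we would interpolate between this bound and a bound with a slightly better exponent. Concretely, we take Hermite truncation $f=f_{\le N}+f_{>N}$. For the tail, hypercontractivity and rank at least $N$ give extra decay, because $\sum_j|\rho_{ij}|^N$ is smaller for long blocks. The aim is an estimate summable over dyadic scales, i.e.\ a geometric gain $2^{-\eps j}$ at scale $2^j$. Dyadic (M\'oricz-type) chaining then gives \eqref{ns_bdd} without any logarithm.

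\textbf{The endpoint $p=2$.} Here the block variance bound $\E|S_I|^2\le K\,d!\cdots|I|\,\|f\|_2^2$ follows at once from \eqref{OL_pq}. Dyadic summation, however, loses $\log n$, so we need a different mechanism. The plan is to split $f$ by a rank-preserving truncation into a bounded part $f_M$ and a remainder $f-f_M$ whose $L^2$ norm is small, with both parts still of Hermite rank at least $d$. The bounded part $f_M$ is in $L^4$, so the $p=4$ case gives $\|\max|S^{f_M}|\|_2\le C\sqrt n\|f_M\|_4$. This is useless directly, because $\|f_M\|_4$ can be much larger than $\|f\|_2$.

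The key step is to use a level-dependent truncation over dyadic scales. At block length $2^j$, we truncate at a level $M_j$ tuned so that $\|f_{M_j}\|_4^2\approx$ (something like) $2^{(\ldots)}$. To estimate the fourth moment of block sums of $f_M$, we would expand $f_M^2$ into low and high chaoses and use separated diagram estimates. The low chaoses involve covariances to power $\ge d$ on at least some edges and give the $|I|^2\|f\|_2^4$ main term. The high chaoses produce a term $|I|\,\|f_M\|_4^4$, which is the Rosenthal-type second term. Balancing $|I|\,M^2\|f\|_2^2$ against $|I|^2\|f\|_2^4$ determines $M_j$.

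Next we need an adjacent-block maximal lemma. For two adjacent dyadic blocks, the maximum of partial sums within the block pair must be controlled by the block variances plus the tail contributions. The tails $f-f_{M_j}$ must sum over $j$ in $\ell^2$ rather than $\ell^1$, which exploits orthogonality of disjoint truncation shells: $\sum_j\|f_{M_{j+1}}-f_{M_j}\|_2^2\le\|f\|_2^2$. The variances of these shell pieces at scale $j$ are then handled by a chaining argument adapted to each shell.

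\textbf{Main obstacle.} We expect the main difficulty to be making this orthogonal-shell decomposition interact with the maximal function without reintroducing $\log n$. In particular, the cross terms between shells in the diagram expansion must decay geometrically in $|j-j'|$. We would first try to obtain that decay from the high-chaos part of the products of shells, using the $d$th-power row bound $K$.
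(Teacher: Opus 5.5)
The gap is at $p=2$. Once the truncation level depends on the scale, $M_j\approx 2^{j/2}$, every dyadic level carries its own tail. The straightforward bound for these tails is $\sqrt n\sum_{j\le\log_2 n}\|f-f_{M_j}\|_2$, an $\ell^1$ sum over levels. If $\E[f(\cZ)^2\ind_{\{|f(\cZ)|>t\}}]\approx1/\log t$, this sum is of order $\sqrt{n\log n}$, so the logarithm comes back. You recognize this, but your remedy (orthogonal shells $\Delta_k=f_{M_{k+1}}-f_{M_k}$ whose cross terms decay in $|k-k'|$) is only something you ``would first try'', and nothing you state supplies it. Disjoint supports give orthogonality only at a common site. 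For $i\ne j$, orthogonality and the row bound give only $|\E[\Delta_k(Z_i)\Delta_{k'}(Z_j)]|\le\|\Delta_k\|_2\|\Delta_{k'}\|_2|r(i,j)|^d$. And even a per-shell maximal bound $C\sqrt n\|\Delta_k\|_2$ still sums over $k$ in $\ell^1$, not $\ell^2$. Your ``adjacent-block maximal lemma'' is also never formulated: you do not say which two-block quantity is bounded, or why the dyadic series becomes summable. Finally, the reduction to small correlations, which the separated diagram bounds require, is missing.

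The missing idea is that no scale-dependent truncation is needed. The paper truncates once, at level $\sqrt n$. The diagonal term $\sum_{i\in I}\E[g_n(Z_i)^4]\approx|I|n$ of the single-block fourth moment is what pushed you toward level-dependent cutoffs. The paper avoids it by bounding $\E[S_I^2S_J^2]$ for adjacent disjoint $I,J$, where no index occurs four times.
\begin{itemize}
\item The worst term, $\sum_{i\in I,j\in J}\E[g_n(Z_i)^2g_n(Z_j)^2]$, is controlled by Ornstein--Uhlenbeck hypercontractivity: for $|r|\le\eps$, the chaoses of $g_n^2$ of order $\ge d$ contribute at most $Cn^{\theta}|r|^d$, with $\theta=\frac{2\eps}{1+\eps}<\frac12$.
\item The chaoses of order $1\le q<d$ are handled by their $O((\log n)^{q/2})$ coefficients, together with H\"older and the mixed-rank diagram bound $m^{2-\frac{q}{2d}}$, where $m=|I|+|J|$.
\end{itemize}
Together these give $\E[S_I^2S_J^2]\le Cn^{1-\al}(|I|+|J|)^{1+\al}$. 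The cut maximal function with $z_D=\min\{|S_{D^-}|,|S_{D^+}|\}$ and $z_D^4\le S_{D^-}^2S_{D^+}^2$ then turns this into a geometric dyadic sum. The tail $f-T_n$ lives on a set of Gaussian measure at most $1/n$. It is restored once, through an $L^2$ bound on $\sum_{k\le n}|(f-T_n)(Z_k)|$, so tails are never summed across scales. Small correlations are arranged by splitting the indices into $O_{d,K}(1)$ classes.

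For $p>2$, your Malliavin--Meyer block bound plus dyadic summation is the paper's argument. The Hermite-cutoff detour is unnecessary, and the row bound gives no decay of $\sum_j|r(i,j)|^N$ in the block length anyway. At level $j$, $\|\max_D|S_D|\|_p\le(\sum_D\|S_D\|_p^p)^{1/p}\le C\sqrt n\,2^{-j(\frac12-\frac1p)}$ already sums geometrically.
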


\begin{remark}
\rm
(i) The constant $C_{d, K}$ in  the endpoint estimate
\eqref{ns_l2} is uniform in $f\in L^2(\gamma)$ and
in the   covariance matrix, subject to the Hermite-rank
condition and row-sum bound \eqref{ns_rows}.
The exponent $p=2$ is critical for the usual dyadic maximal
argument.  A block estimate of order $\sqrt{|I|}$ in $L^p$
produces the factors $2^{-j(\frac{1}{2}-\frac{1}{p})}$ at level
$j$.  These form a summable geometric series for $p>2$;
at $p=2$ the decay disappears, so summation over the dyadic
levels produces a logarithmic loss.\footnote{The geometric
series is evaluated in \eqref{lp_geometric} and used in
\eqref{lp_max_proof}.  At $p=2$, its finite counterpart has
$L+1$ terms, where $L$ is of order $\log n$.}
Indeed, from the orthogonality relation \eqref{OL_pq},
we can deduce that
$\|\sum_{i\in I}f(Z_i)\|^2_2
 \le\|f\|_2^2\sum_{i,j\in I}|\E[Z_iZ_j]|^d
 \le K|I|\,\|f\|_2^2$,
and direct dyadic summation gives only
$C\sqrt{Kn}\log(2n)\|f\|_2$ for the maximal $L^2$ norm.
The main estimate \eqref{ns_l2} removes this logarithmic loss
under the sole finite-variance assumption.
It is the key input used for
the BMD principles in Section~\ref{bm_intro}.

\smallskip

(ii) For independent Gaussian coordinates, \eqref{ns_l2} follows
from Doob's inequality.  In the dependent setting, the partial
sums of $f(Z_k)$ are generally not martingales, so neither
Doob's inequality nor the Burkholder--Davis--Gundy inequalities
\cite{BDG72} apply directly; see
\cite[Chapters~II and~IV]{RY99} for standard treatments.
The right-hand side of \eqref{ns_l2} is $n\|f\|_2^2$, not the terminal variance.
For $G\sim\cN(0,1)$, the choice $Z_1=G$, $Z_2=-G$, and
$f(x)=x$ satisfies \eqref{ns_rows} with $d=1$, $K=2$, but the
terminal sum is zero and the maximal partial sum is $|G|$.
Gaussian entropy and concentration inequalities
  give the bound when $f(x)=ax$ and
$d=1$; see, e.g.,  \cite[Chapters~1--2]{AT07}.
For nonlinear $f$, however, the partial-sum process need not
be Gaussian, and square integrability alone does not supply
sub-Gaussian increment control.  For a fixed finite Hermite
expansion, Nelson's hypercontractivity, followed
by a higher-moment maximal argument, gives a bound with a
constant depending on the Hermite cutoff; see
\cite[Section~2.8]{NP12}.  The endpoint \eqref{ns_l2} controls
the full $L^2(\ga)$ class of the prescribed  Hermite rank 
uniformly in that cutoff.

\smallskip

(iii) The resulting $\sqrt n$ scale agrees with that for independent
sums and square-integrable martingales, and is optimal: for
independent standard Gaussian coordinates and $f=h_d$, the
terminal sum already has $L^2$ norm $\sqrt n$.

\end{remark}

The higher-moment regime $p>2$ and the critical endpoint $p=2$
require different arguments.

\medskip
\noindent
$\bullet$ \textbf{The higher-moment case $p>2$.}
The increment argument of Nourdin and Nualart \cite{NN20},
applied under the row condition \eqref{ns_rows}, gives
\begin{align}\label{lp_preview}
 \left\|\sum_{k\in I}f(Z_k)\right\|_p
 \le C_{p,d}\sqrt{K|I|}\,\|f\|_p.
\end{align}
A dyadic decomposition then yields the maximal inequality:
the contribution of level $j$ is bounded by a constant times
$\sqrt n\,2^{-j(\frac{1}{2}-\frac{1}{p})}\|f\|_p$, and the resulting
geometric
series converges.  Section~\ref{lp_sec} gives the
Malliavin--Meyer block estimate and this summation for every
$p\in (2, \infty)$.

\medskip
\noindent
$\bullet$ \textbf{The critical endpoint $p=2$.}
Our proof in Section~\ref{ns_sec} removes the logarithmic loss
by controlling two adjacent blocks simultaneously.
The proof is organized into the following six steps.

\pstep{step_trunc}{Rank-preserving truncation}
The case $f=0$ is immediate.  By homogeneity, we may otherwise
assume $\|f\|_2=1$.  Set
\begin{align}\label{intro_trunc}
 T_n=f\ind_{\{|f|\le\sqrt n\}}
 \AND g_n=T_n-\Pi_{<d}T_n,
\end{align}
where $\Pi_{<d}$ is the orthogonal projection onto
$\operatorname{span}\{h_0,\ldots,h_{d-1}\}$.
Then $g_n$ has Hermite rank at least $d$ and, by
Proposition~\ref{clip_prop}, satisfies $\|g_n\|_2\le1$ and
$\|g_n\|_4^4\le C_dn$.

\pstep{step_diagrams}{Separated diagram estimates}
We first impose the fixed small-correlation bound
\eqref{assump_31} on distinct coordinates; Step~\eqref{step_partition}
removes this restriction.  For products at distinct coordinates,
the common-rank estimates in
Lemma~\ref{diag_lem}\textup{(i)--(ii)} are scalar consequences
of Bardet and Surgailis \cite[Lemma~1]{BS13a}.
We also prove a mixed-rank refinement in
Lemma~\ref{diag_lem}\textup{(iii)}: when one factor belongs to
a fixed chaos of order $1\le q<d$ and the other two have rank
at least $d$, retaining the low-order powers of the correlations improves the
bound from $O(m^2)$ to $O(m^{2-\frac{q}{2d}})$.

\pstep{step_square}{Low--high decomposition of the square}
Repeated indices in the mixed fourth moment introduce $g_n^2$,
which may have nonzero low-order Hermite coefficients.
We therefore write
\begin{align}\label{intro_square}
 g_n^2-\E[g_n(\cZ)^2]
 =\sum_{q=1}^{d-1}B_{q,n}+\mathcal H_n,
 \quad B_{q,n}=\langle g_n^2,h_q\rangle h_q,
\end{align}
where $\mathcal H_n$ has Hermite rank at least $d$.
Lemma~\ref{sq_lem} controls the low-order coefficients by
Gaussian small-set estimates and the high-order part by
orthogonality and Ornstein--Uhlenbeck hypercontractivity.
These bounds are then combined with the separated diagram
estimates from Step~\eqref{step_diagrams}.

\pstep{step_adjacent}{Adjacent blocks and their maximum}
For $S_I(g_n)=\sum_{i\in I}g_n(Z_i)$, the preceding estimates
give, for adjacent disjoint intervals $I,J$,
\begin{align*}
 \E[S_I(g_n)^2S_J(g_n)^2]
 \le C_{d,K}n^{1-\al}(|I|+|J|)^{1+\al},
 \quad \al=\frac{1}{4d}>0.
\end{align*}
This strict gain makes the dyadic argument in
Lemma~\ref{max_lem} summable and yields
\begin{align*}
 \left\|\max_{m\le n}
 \left|\sum_{k=1}^m g_n(Z_k)\right|\right\|_2
 \le C_{d,K}\sqrt n.
\end{align*}

\pstep{step_restore}{Restoration in $L^2$}
It remains to restore the discarded tail $f-T_n$ and the
low-Hermite correction $\Pi_{<d}T_n$.
Lemma~\ref{small_support} controls the former, while
\eqref{ns_lowmax}, together with the bounds on the removed
Hermite coefficients, controls the latter.
Consequently, for $\|f\|_2=1$,
\begin{align*}
 \left\|\max_{0\le m\le n}
 \left|\sum_{k=1}^m(f-g_n)(Z_k)\right|\right\|_2
 \le C_{d,K}\sqrt n.
\end{align*}

\pstep{step_partition}{Finite partition and recombination}
Finally, the row condition \eqref{ns_rows} bounds the number of
coordinates that can be strongly correlated with any fixed coordinate.
This allows us to partition the underlying Gaussian family into
finitely many subfamilies, with the number of classes depending only
on $(d,K)$, such that distinct coordinates within each subfamily
have uniformly small correlations.  Each subfamily therefore falls
within the setting of
Steps~\eqref{step_diagrams}--\eqref{step_restore}.
Applying the separated estimate in the inherited order within
each class and recombining the classes proves \eqref{ns_l2}.

Our strategy is inspired by the rank-preserving truncation used in
\cite[p.~1523]{BHB25} and by the decimation-by-residue-classes
argument used in \cite{BS13b,LPV23}.  This has the same
partition--estimate--recombine structure as the decimation
strategy in \cite{MPZ26};
see  \cite[Remark 3.6]{MPZ26} for the discussion on the decimation technique.

The proof of the $L^2$ endpoint result is given in  Section~\ref{max_pf}.

\subsection{The Breuer--Major--Donsker principle}
\label{bm_intro}

As anticipated, let us now turn to the
Breuer--Major--Donsker principle, or simply the BMD principle.
For independent identically distributed centered random variables
$(X_k)_{k\ge1}$ with variance $\sigma^2<\infty$, the classical
central limit theorem (CLT) asserts that
\begin{align*}
 \frac{1}{\sqrt n}\sum_{k=1}^nX_k
 \LRA\cN(0,\sigma^2).
\end{align*}
The Gaussian law is thus the universal limit for these normalized
sums.  Donsker's invariance principle identifies the corresponding
process limit: the normalized partial-sum process converges to
$\sigma\cW$, where $\cW$ is standard Brownian motion.

The celebrated theorem of Breuer and Major \cite{BM83} extends
CLTs to nonlinear functions of stationary
Gaussian sequences.  Their work was motivated by the non-central
limit theorems of Dobrushin, Major, Rosenblatt, and Taqqu; see
\cite{Taq77,Taq79,DM79,Maj81,Ros81}.  It identifies a summable regime,
determined jointly by the covariance
and the Hermite rank, in which the Gaussian limit persists even though
the underlying Gaussian sequence may exhibit substantial dependence.
 The original proof uses moments and diagram formulae.  Modern proofs
also follow from the fourth moment theorem of Nualart and Peccati
\cite{NP05}, its multivariate extension by Peccati and Tudor
\cite{PT05}, and the chaotic central limit theorem of Hu and
Nualart \cite{HN05}; see \cite[Chapter~7]{NP12} and
\cite{Nou13} for detailed accounts.

Let $\mathbf G=(G_k)_{k\in\Z}$ be a centered stationary Gaussian
sequence with covariance $\E[G_kG_\ell]=\rho_G(k-\ell)$ and
$\rho_G(0)=1$, and let $\varphi\in L^2(\ga)$.
Define
\begin{align}\label{d_V}
 V_n(t)=\frac{1}{\sqrt n}
 \sum_{k=1}^{\lfloor nt\rfloor}\varphi(G_k),
 \quad t\in[0,1].
\end{align}

Now we are ready to state the Breuer--Major theorem.

\begin{theorem}[\textsf{Breuer--Major theorem}
\cite{BM83}]\label{bm_thm}
Assume that $\varphi\in L^2(\ga)$ has Hermite rank $d\ge1$ as in
\eqref{d_phi} and $\rho_G\in\ell^d(\Z)$.  Then the
finite-dimensional distributions of $V_n$ converge to those of
$\s_G\cW$, where
\begin{align}\label{d_var}
 \s_G^2
 =\sum_{q=d}^{\infty}q!c_q^2
   \sum_{k\in\Z}\rho_G(k)^q
 \in[0,\infty).
\end{align}
\end{theorem}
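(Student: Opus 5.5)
The statement to prove is the classical Breuer--Major theorem (Theorem~\ref{bm_thm}). I will prove it by the Fourth Moment Theorem combined with a chaos truncation.

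\textbf{Setting up the chaos expansion.} Realize $\mathbf G$ as an isonormal Gaussian process: take a Hilbert space $\mathfrak H$ with vectors $e_k$ satisfying $\langle e_k,e_\ell\rangle=\rho_G(k-\ell)$, and set $G_k=W(e_k)$. Then $H_q(G_k)=I_q(e_k^{\otimes q})$, and
\begin{align*}
 V_n(t)=\sum_{q\ge d}c_q\,\frac{1}{\sqrt n}\sum_{k=1}^{\lfloor nt\rfloor} I_q(e_k^{\otimes q})=:\sum_{q\ge d}V_{n,q}(t).
\end{align*}

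\textbf{Covariances.} By \eqref{OL_pq}, for $s\le t$,
\begin{align*}
 \E[V_{n,q}(s)V_{n,q}(t)]=\frac{q!c_q^2}{n}\sum_{k\le ns}\sum_{\ell\le nt}\rho_G(k-\ell)^q .
\end{align*}
Since $\rho_G\in\ell^d\subset\ell^q$ for $q\ge d$ and $|\rho_G|\le1$, dominated convergence (Ces\`aro) gives the limit $s\,q!c_q^2\sum_k\rho_G(k)^q$. Summing over $q$ yields $s\,\sigma_G^2$, i.e.\ the covariance of $\sigma_G\cW$. I also need the uniform tail bound
\begin{align*}
 \sup_n\E\Bigl[\Bigl|\sum_{q>Q}V_{n,q}(t)\Bigr|^2\Bigr]\le t\sum_{q>Q}q!c_q^2\sum_k|\rho_G(k)|^d\to0 \quad (Q\to\infty),
\end{align*}
which uses $|\rho|^q\le|\rho|^d$. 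Combined with a standard approximation argument (e.g.\ \cite[Theorem~3.2]{Bil99}-type), this reduces the problem to finitely many chaoses $d\le q\le Q$.

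\textbf{Finite-chaos limit via the fourth moment theorem.} Fix $Q$ and times $t_1,\dots,t_r$. By the multivariate Peccati--Tudor theorem \cite{PT05}, joint convergence of $(V_{n,q}(t_i))_{q,i}$ to a Gaussian vector with the limiting covariance (chaoses of different orders are uncorrelated) follows from the covariance convergence above plus, for each $q\ge2$, the condition $\|f_n\otimes_r f_n\|\to0$ for $1\le r\le q-1$, where $f_n=n^{-1/2}\sum_{k\le nt}e_k^{\otimes q}$. The chaos $q=1$ is Gaussian already. Now
\begin{align*}
 \|f_n\otimes_rf_n\|^2=\frac1{n^2}\sum_{i,j,k,\ell}\rho(i-j)^r\rho(k-\ell)^r\rho(i-k)^{q-r}\rho(j-\ell)^{q-r}.
\end{align*}
This is the main obstacle. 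I would follow the argument in \cite[Theorem~7.2.4]{NP12}: fix $N$ and split $\rho=\rho\ind_{|\cdot|\le N}+\rho\ind_{|\cdot|>N}$. Then apply H\"older/Young's convolution inequality with exponents $q/r$ and $q/(q-r)$. This bounds the sum by
\begin{align*}
 C\,n^{-1}\Bigl(\sum_{|k|<n}|\rho(k)|^r\Bigr)\Bigl(\sum_{|k|<n}|\rho(k)|^{q-r}\Bigr)\cdot(\text{factors})
\end{align*}
and I would then show that it tends to $0$. The key elementary step is that $n^{-1+r/q}\sum_{|k|<n}|\rho(k)|^r\to0$ when $\rho\in\ell^q$, which follows by splitting at $|k|\le N$ and using H\"older on the remainder.

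\textbf{Conclusion.} The Gaussian limit for each finite truncation, together with the uniform $L^2$ tail control, gives convergence of the finite-dimensional distributions of $V_n$ to those of $\sigma_G\cW$.
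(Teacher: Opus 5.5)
Your proposal is correct. The paper does not prove Theorem~\ref{bm_thm} itself: it quotes it from \cite{BM83} and points to the modern proof via the fourth moment theorem and its multivariate and chaotic extensions \cite{NP05,PT05,HN05}, as in \cite[Chapter~7]{NP12}. Your argument is exactly that route: uniform $L^2$ control of the Hermite tail, Peccati--Tudor applied to finitely many chaoses, and the contraction estimate. The unspecified ``(factors)'' in that estimate is just $\sum_k|\rho_G(k)|^q$, obtained from H\"older with exponents $q/r$ and $q/(q-r)$ around the four-cycle. So $\|f_n\otimes_r f_n\|^2\le C n^{-1}\sum_{|k|<n}|\rho_G(k)|^r\sum_{|k|<n}|\rho_G(k)|^{q-r}$, and applying your elementary lemma with $r$ and with $q-r$ sends this bound to zero.
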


There is a corresponding continuous-time theorem.  Let
$\mathbf B=(B_x)_{x\in\R}$ be a jointly measurable centered
stationary Gaussian process with
\begin{align}\label{c_cov}
 \E[B_xB_y]=\rho_B(x-y),\quad \rho_B(0)=1,
\end{align}
and put
\begin{align}\label{c_V}
 V_T(t)=\frac{1}{\sqrt T}
 \int_0^{Tt}\varphi(B_x)\,dx,\quad t\in[0,1].
\end{align}
By joint measurability and $\varphi\in L^1(\ga)$,
these integral paths are almost surely continuous.  
Joint measurability and stationarity also imply continuity of $\rho_B$
(see Section~\ref{cont_pf}).
If $\varphi$ has Hermite rank $d\ge1$ and
\begin{align}\label{c_bm}
 \int_\R|\rho_B(x)|^d\,dx<\infty,
\end{align}
then the finite-dimensional distributions of $V_T$ converge, as
$T\to\infty$, to those of $\s_{\varphi,B}\cW$, where
\begin{align}\label{c_var}
 \s_{\varphi,B}^2
 =\sum_{q=d}^{\infty}q!c_q^2
   \int_\R\rho_B(x)^q\,dx\in[0,\infty).
\end{align}

\noi
Continuous-time CLTs of this type go back to Maruyama
\cite{Mar76} and Chambers and Slud \cite{CS89}; the formulation
above appears in Ben Hariz \cite{BH02}.

The missing ingredient for functional convergence is
therefore tightness.
The terminology \emph{Breuer--Major--Donsker principle},
introduced in \cite{MPZ26}, reflects the combination of the
Breuer--Major finite-dimensional limits with Donsker-type
convergence of the entire partial-sum path.  We use
$\mathbf D([0,1])$ for the space of real-valued c\`adl\`ag paths
with the Skorokhod $J_1$ topology and $\mathbf C([0,1])$ for the
space of continuous paths with the uniform norm
$\|x\|_\infty=\sup_{t\in[0,1]}|x(t)|$.\footnote{We formulate
all functional limit theorems on $[0,1]$ for simplicity.
The same arguments apply on every fixed interval $[0,T]$,
$T<\infty$, and hence yield the corresponding convergence in
$\mathbf D(\R_+;\R)$ with the local Skorokhod $J_1$ topology,
or in $\mathbf C(\R_+;\R)$ with the topology of uniform
convergence on compact intervals.  The same convention applies
to vector-valued limits.}
 Unless otherwise specified,
path norms are uniform norms on $[0,1]$.
A family in $\mathbf D([0,1])$ is $C$-tight if it is tight and
every weak subsequential limit has continuous paths; see
\cite[Chapter~3]{Bil99} for more details.

Earlier functional results of Chambers and Slud \cite{CS89}
and Ben Hariz \cite{BH02} impose stronger summability conditions
on the Hermite coefficients.  For example, one continuous-time
condition in \cite{BH02}, in our normalization, is that for some
$R>1$,
\begin{align}\label{BH_cond}
 \sum_{q=d}^{\infty}|c_q|\sqrt{q!}\,
 R^q\left(\int_\R|\rho_B(x)|^q\,dx\right)^{\frac{1}{2}}<\infty.
\end{align}
Nourdin and Nualart \cite[Theorem~1.1]{NN20} establish what they
call the \emph{functional Breuer--Major theorem} under the
additional assumption $\varphi\in L^p(\ga)$ for some $p>2$.
Their proof uses Meyer's inequalities to control increments and
obtain tightness.  Campese, Nourdin, and Nualart
\cite[Theorems~1.1--1.2]{CNN20} obtain the continuous-time
counterpart under the same moment condition and extend it to a
class of nonstationary self-similar Gaussian processes.

Mansanarez, Poly, and the second author \cite{MPZ26} address
the finite-variance setting through Gaussian prediction theory.
Their non-deterministic case assumes a positive one-step
prediction error,
$\|G_0-\E[G_0\mid\sigma(G_j:j\le-1)]\|_2^2>0$.
They obtain the BMD principle in this case and, under an
additional decimation assumption, in the deterministic case;
see \cite[Theorems~1.3--1.4]{MPZ26}.
The latter result includes $\rho_G\in\ell^1(\Z)$.
Our maximal estimate \eqref{ns_l2} removes these restrictions
through uniform path-norm approximation and yields the
following stationary results.

\medskip
\noindent
$\bullet$ \textbf{Stationary BMD principles.}
Define the polygonal interpolation of $V_n$ by
\begin{align}\label{hat_V}
 \wt{V_n}(t)
 =V_n(t)+\frac{nt-\lfloor nt\rfloor}{\sqrt n}
   \varphi(G_{\lfloor nt\rfloor+1}),
 \quad 0\le t\le1.
\end{align}

\begin{corollary}[\textsf{Finite-variance discrete BMD principle}]
\label{disc_thm}
Let $\mathbf G$ be a centered stationary Gaussian sequence with
covariance function $\rho_G$, and let $V_n$ be as in \eqref{d_V},
with $\varphi\in L^2(\ga)$ having Hermite rank $d\ge1$ as in
\eqref{d_phi}.  Assume that $\rho_G(0)=1$ and
\begin{align}\label{d_bm}
 \sum_{k\in\Z}|\rho_G(k)|^d<\infty.
\end{align}
Then, as $n\to+\infty$,   $V_n\LRA\s_G\cW$ in  $\mathbf D([0,1])$,
where $\s_G^2$ is given by \eqref{d_var}.  The family $(V_n)$ is
$C$-tight.  Moreover, with $\wt{V_n}$ as in \eqref{hat_V},
 $\wt{V_n}\LRA\s_G\cW$
 in $\mathbf C([0,1])$.

\end{corollary}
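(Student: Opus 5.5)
Finite-dimensional convergence of $V_n$ to $\s_G\cW$ is given by Theorem~\ref{bm_thm}, since \eqref{d_bm} is exactly $\rho_G\in\ell^d(\Z)$. It therefore remains to prove $C$-tightness of $(V_n)$ in $\mathbf D([0,1])$. The continuous statement for $\wt{V_n}$ then follows because $\|\wt{V_n}-V_n\|_\infty\le n^{-1/2}\max_{k\le n+1}|\varphi(G_k)|\to0$ in probability. This last fact holds since, for each $\eps>0$, stationarity gives
\[
\PP\bigl(n^{-1/2}\max_{k\le n+1}|\varphi(G_k)|>\eps\bigr)\le (n+1)\PP(|\varphi(\cZ)|>\eps\sqrt n)\le 2\eps^{-2}\E\bigl[\varphi(\cZ)^2\ind_{\{|\varphi(\cZ)|>\eps\sqrt n\}}\bigr]\to0 .
\]
Because $\wt{V_n}$ is continuous and coincides with $V_n$ at the grid points, $C$-tightness of $(V_n)$ transfers to tightness of $(\wt{V_n})$ in $\mathbf C([0,1])$.

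The plan for tightness is a Hermite-tail approximation argument. Fix $N\ge d$ and split $\varphi=\varphi_{\le N}+\varphi_{>N}$, where $\varphi_{\le N}=\sum_{q=d}^Nc_qH_q$. The remainder $\varphi_{>N}$ has Hermite rank at least $d$ and satisfies $\|\varphi_{>N}\|_2^2=\sum_{q>N}q!c_q^2\to0$ as $N\to\infty$. For any window $\{k+1,\dots,k+n\}$, the Gaussian vector $(G_{k+1},\dots,G_{k+n})$ satisfies the row condition \eqref{ns_rows} with $K=\sum_{j\in\Z}|\rho_G(j)|^d$, uniformly in $k$ and $n$. Applying \eqref{ns_l2} to $\varphi_{>N}$ on $\{1,\dots,n\}$ gives
\[
\E\|V_n^{>N}\|_\infty^2\le C_{d,K}\|\varphi_{>N}\|_2^2,
\]
uniformly in $n$. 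This is the key step, and it is exactly where the finite-variance endpoint replaces the $L^{2+}$ assumption.

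For the polynomial part $\varphi_{\le N}\in L^p(\ga)$ for every $p$, so I would use \eqref{ns_bdd} with some fixed $p>2$ on the window $\{\lfloor ns\rfloor+1,\dots,\lfloor nt\rfloor\}$:
\[
\Bigl\|\sup_{s\le u\le t}|V_n^{\le N}(u)-V_n^{\le N}(s)|\Bigr\|_p\le C_{p,d,K}\sqrt{(\lfloor nt\rfloor-\lfloor ns\rfloor)/n}\,\|\varphi_{\le N}\|_p .
\]
Take $p=4$, say. Splitting $[0,1]$ into intervals of length $\delta$, a union bound gives
\[
\PP\bigl(w(V_n^{\le N},\delta)>\eps\bigr)\lesssim \delta^{-1}\eps^{-p}(\delta+1/n)^{p/2}C_N .
\]
This tends to $0$ as $\delta\to0$ and then $n\to\infty$, the $1/n$ term being handled by taking $n\ge1/\delta$. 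That yields the modulus-of-continuity criterion for $C$-tightness of $(V_n^{\le N})$ \cite[Chapter~3]{Bil99}.

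Combining, $w(V_n,\delta)\le w(V_n^{\le N},\delta)+2\|V_n^{>N}\|_\infty$, so
\[
\limsup_{\delta\to0}\limsup_n\PP\bigl(w(V_n,\delta)>3\eps\bigr)\le \eps^{-2}C_{d,K}\|\varphi_{>N}\|_2^2 .
\]
Letting $N\to\infty$ makes this zero. Together with tightness of $V_n(0)=0$, this gives $C$-tightness, and with Theorem~\ref{bm_thm} it gives $V_n\LRA\s_G\cW$. The only delicate point is making sure the maximal inequality is applied with a constant independent of $n$ and of the window start, which the stationarity-based uniform row bound handles.
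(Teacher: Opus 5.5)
Your argument is correct. It uses the same decisive input as the paper: \eqref{ns_l2} applied to the Hermite tail, uniformly in $n$. The organization, however, differs.

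The paper first proves $V_n^{\varphi_M}\LRA\s_{G,M}\cW$ for each finite expansion, using hypercontractive increment bounds with \cite[Lemma~3.1]{NN20} and Theorem~\ref{bm_thm} applied to $\varphi_M$. It then checks $\s_{G,M}\to\s_G$ and concludes with the approximation theorem \cite[Theorem~3.2]{Bil99}. $C$-tightness is read off from the continuity of the limit, and the polygonal statement is quoted from \cite[Proposition~1.7]{MPZ26}.

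You instead apply Theorem~\ref{bm_thm} to $\varphi$ itself and verify the modulus criterion for $C$-tightness directly through $w(V_n,\delta)\le w(V_n^{\le N},\delta)+2\|V_n^{>N}\|_\infty$. This removes the need for the variance comparison and for Billingsley's approximation theorem. Your truncated-second-moment bound on $n^{-1/2}\max_k|\varphi(G_k)|$ also makes the polygonal step self-contained rather than cited.

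For the polynomial part you invoke the $p>2$ maximal inequality \eqref{ns_bdd}, whose proof goes through Meyer's inequalities. That is heavier than necessary, since $\varphi_{\le N}$ lives in finitely many chaoses and hypercontractivity already gives the increment bounds the paper uses. It is nonetheless legitimately available, and it makes your union bound over the $\delta$-grid immediate. Conversely, the paper's route yields the functional limit for every truncation $\varphi_M$ along the way.
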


\begin{corollary}[\textsf{Finite-variance continuous-time BMD principle}]
\label{cont_thm}
Let $\mathbf B$ be a jointly measurable, centered Gaussian process with
covariance structure \eqref{c_cov} and \eqref{c_bm}. 
Let $\varphi\in L^2(\ga)$ have Hermite rank $d\ge1$ as in
\eqref{d_phi}.
Then, as $T\to\infty$,
$ V_T\LRA\s_{\varphi,B}\cW$
in $\mathbf C([0,1]),$
where $\s_{\varphi,B}^2$ is given by \eqref{c_var}.
\end{corollary}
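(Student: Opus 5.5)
The plan is the standard route: convergence of finite-dimensional distributions plus tightness in $\mathbf C([0,1])$. The tightness is obtained by approximating $\varphi$ with a finite Hermite expansion and controlling the tail uniformly in path norm through the $L^2$ maximal inequality \eqref{ns_l2}, applied to a discretization of the integral. The finite-dimensional convergence of $V_T$ to $\s_{\varphi,B}\cW$ is the continuous-time Breuer--Major theorem recalled above under \eqref{c_bm}, so only tightness needs proof. Fix $N\ge d$ and split $\varphi=\varphi_N+r_N$ with $\varphi_N=\sum_{q=d}^{N}c_qH_q$ and $r_N=\varphi-\varphi_N$. The remainder $r_N$ has Hermite rank at least $d$ and $\|r_N\|_2\to0$ as $N\to\infty$. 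Since $\varphi_N$ is a polynomial, $\varphi_N\in L^p(\ga)$ for every $p$. Tightness of $V_T^{(N)}=T^{-1/2}\int_0^{T\cdot}\varphi_N(B_x)\,dx$ then follows from the Malliavin--Meyer increment bound \eqref{lp_preview} and Kolmogorov--Chentsov. Alternatively, hypercontractivity on the finite chaos upgrades $E|V_T^{(N)}(t)-V_T^{(N)}(s)|^2\le C|t-s|$ to a $p$th moment bound $\le C_{N,p}|t-s|^{p/2}$ with $p>2$. A standard approximation lemma (Billingsley, Theorem 3.2) then reduces everything to
\begin{align*}
 \lim_{N\to\infty}\limsup_{T\to\infty}
 \E\Bigl[\sup_{t\le1}\Bigl|\tfrac1{\sqrt T}\int_0^{Tt}r_N(B_x)\,dx\Bigr|^2\Bigr]=0 .
\end{align*}

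For this uniform-in-$T$ bound I will discretize and apply Theorem~\ref{ns_max}. Take $T$ an integer, adjusting by one unit interval if not; that unit interval contributes $O(T^{-1/2}\|r_N\|_2)$ in $L^2$ sup norm trivially. Write $\int_0^{m}r_N(B_x)\,dx=\int_0^1\sum_{k=0}^{m-1}r_N(B_{k+u})\,du$. Then
\begin{align*}
\sup_{t}\Bigl|\int_0^{Tt}r_N\Bigr|\le \int_0^1\max_{m\le T}\Bigl|\sum_{k<m}r_N(B_{k+u})\Bigr|\,du+\sup_{k<T}\int_k^{k+1}|r_N(B_x)|\,dx .
\end{align*}
For fixed $u$, the Gaussian vector $Z_k=B_{k+u}$, $k=0,\dots,T-1$, has standard marginals and row sums $\sum_j|\rho_B(k-j)|^d$. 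This is the main obstacle: \eqref{c_bm} controls $\int|\rho_B|^d$, not the lattice sums $\sum_{j\in\Z}|\rho_B(j)|^d$, which could be large or infinite for a merely integrable continuous $\rho_B$.

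To handle this obstacle I will average over the shift. The quantity $\int_0^1\sum_{j\in\Z}|\rho_B(j+v)|^d\,dv$ equals $\int_\R|\rho_B|^d<\infty$. Lattice row sums are therefore finite for almost every shift, but a uniform $K$ is not available. My first attempt is to refine the mesh, i.e.\ use step $\delta$ and define $Z_k=B_{k\delta+u}$, and to use a Chebyshev/Markov selection. The good shifts $u$ are those with lattice sum $\le K$. For the bad shifts I use the crude bound given by the rank-$d$ orthogonality \eqref{OL_pq} together with dyadic summation, which costs a logarithm. Because the set of bad $u$ has measure $\le C/K$, this crude contribution may not be summable, so instead I will modify the family. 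Fix $\eta>0$ small. Coordinates whose correlation exceeds $\eta$ can be partitioned into finitely many classes, as in Step~\eqref{step_partition}, and for small correlations $|\rho|^d\le$ a suitable averaged quantity. Concretely, I will prove the auxiliary bound $\E\sup_t|\int_0^{Tt}r(B_x)dx|^2\le C_{d,\rho_B}T\|r\|_2^2$ directly. To do so I rerun the proof of \eqref{ns_l2} with sums replaced by integrals, checking that every diagram estimate there only uses row integrals $\sup_x\int|\rho_B(x-y)|^d dy$. This continuous analogue is where the real work lies.

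Given that bound, the second (local sup) term is handled by $\E\sup_k(\int_k^{k+1}|r_N|)^2\le\E\int_0^T r_N(B_x)^2dx=T\|r_N\|_2^2$. So both terms are $\le C T\|r_N\|_2^2$, which vanishes after dividing by $T$ and letting $N\to\infty$. Combining this with the finite-dimensional convergence and the tightness of $V_T^{(N)}$ gives $V_T\LRA\s_{\varphi,B}\cW$ in $\mathbf C([0,1])$, since the paths are continuous.
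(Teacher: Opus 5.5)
\textbf{Genuine gap.} It sits exactly at the obstacle you identify, and the remedy you propose does not address it. Your fiber decomposition is fine. However, for a stationary process the correlations inside a fiber are $\E[B_{k+u}B_{j+u}]=\rho_B(k-j)$. So the row sums $\sum_{j}|\rho_B(k-j)|^d=\sum_{\ell\in\Z}|\rho_B(\ell)|^d$ are the same for every shift $u$.

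The identity $\int_0^1\sum_j|\rho_B(j+v)|^d\,dv=\int_\R|\rho_B|^d$ concerns correlations \emph{between} different fibers. Those never enter once the $u$-integral has been pulled outside the $L^2$ norm by Minkowski. Hence if the unit-lattice sum is infinite, every unit-step fiber violates \eqref{ns_rows}, and no selection of good shifts can help.

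Your fallback, rerunning the proof of \eqref{ns_l2} with sums replaced by integrals, is not carried out, and it is not a routine check. The separated diagram bounds of Lemma~\ref{diag_lem} need $|r(i,j)|\le\eps<\frac13$ for distinct indices. But $\rho_B(x-y)\to1$ as $y\to x$, so the pairs with $|\rho_B(x-y)|>\frac13$ contain a neighborhood of the diagonal. The partition of Step~\eqref{step_partition} also has no continuum analogue: any finite measurable partition of $[0,T]$ has a class of positive measure, which contains pairs of points arbitrarily close together. As written, the uniform-in-$T$ bound for the remainder $r_N$ is therefore unproved.

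\textbf{The missing idea} is to average over the lattice \emph{spacing} rather than the shift. By Tonelli, $\int_A^{2A}\sum_{k\ge1}|\rho_B(ak)|^d\,da=\sum_{k\ge1}k^{-1}\int_{Ak}^{2Ak}|\rho_B|^d\le(1+\log2)\int_A^\infty|\rho_B|^d$, and similarly for $k\le-1$ (Lemma~\ref{lat_lem}). So $K_a=\sum_{k\in\Z}|\rho_B(ak)|^d<\infty$ for almost every $a$. Fix such an $a$ and put $Y(s)=B_{as}$. By stationarity every fiber $(Y(1+u+k))_{k\ge0}$ has row sums $\sup_i\sum_j|\rho_B(a(j-i))|^d\le K_a$, uniformly in $u\in[0,1]$.

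Your decomposition with step $a$ in place of $1$ (this is Proposition~\ref{ns_int}), combined with Theorem~\ref{ns_max} at $p=2$ on each fiber, then gives $\sup_{T\ge a}\E\bigl[\sup_{t\le1}\bigl|T^{-1/2}\int_0^{Tt}r_N(B_x)\,dx\bigr|^2\bigr]\le C_{a,d,K_a}\|r_N\|_2^2$. The rest of your argument then goes through as in the paper: hypercontractive tightness for $\varphi_N$, the finite-dimensional continuous Breuer--Major theorem, $\s_N\to\s_{\varphi,B}$, and Billingsley's Theorem~3.2.

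One further point: \eqref{lp_preview} is a discrete estimate that itself presupposes \eqref{ns_rows}. For the finite expansion you should therefore use your hypercontractivity alternative, which needs only $\int_\R|\rho_B|^d<\infty$.
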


These corollaries establish the functional limits under the same
finite-variance and covariance-summability conditions as the
Breuer--Major finite-dimensional theorems.  In particular, they
remove the additional moment assumptions in \cite{NN20,CNN20}
and the prediction-theoretic assumption in \cite{MPZ26}.

Let us briefly sketch our proof strategy for the BMD principles.
Let $\varphi_M=\sum_{q=d}^M c_qH_q$.  For fixed $M$, each
increment of the associated process belongs to a finite sum of
Wiener chaoses.  The covariance estimate and hypercontractivity
give a moment bound of order strictly greater than two, and hence
tightness.  Combining this with the finite-dimensional
Breuer--Major theorem gives functional convergence for
$\varphi_M$.  In discrete time, this finite-expansion convergence
also follows from \cite{NN20}; here we use the direct
hypercontractive argument.

For a general $\varphi\in L^2(\ga)$, the remaining task is to
control $\varphi-\varphi_M$ uniformly in the path norm.  This is
exactly where the endpoint $p=2$ of Theorem~\ref{ns_max} enters.
We record the approximation estimate for a general Gaussian
vector, so that the same argument also applies to the
nonstationary processes in Section~\ref{ss_model}.
For a Gaussian vector satisfying \eqref{ns_rows},
we define
\begin{align*}
 \mathcal S_n^f(t)
 =\frac{1}{\sqrt n}\sum_{k=1}^{\lfloor nt\rfloor}f(Z_k),
 \quad 0\le t\le1.
\end{align*}
The endpoint gives the following uniform approximation.

\begin{proposition}[\textsf{Uniform approximation in path norm}]
\label{path_cor}
If $f,g\in L^2(\ga)$ and $f-g$ has Hermite rank at least $d$,
then, under \eqref{ns_rows},
\begin{align}\label{path_l2}
 \E \big[   \|\mathcal S_n^f-\mathcal S_n^g\|_\infty^2 \big]
 \le C_{d,K}\|f-g\|_2^2.
\end{align}
The constant is uniform over all $n$ and all covariance matrices
satisfying \eqref{ns_rows}.  In particular, for
$\varphi_M=\sum_{q=d}^M c_qH_q$,
\begin{align}\label{path_tail}
 \sup_n\E  \big[   \|\mathcal S_n^\varphi
                  -\mathcal S_n^{\varphi_M}\|_\infty^2 \big]
 \le C_{d,K}\sum_{q>M}q!c_q^2\xrightarrow{M\to+\infty}0.
\end{align}

\noi
The same estimates hold for the polygonal paths
$\widehat{\mathcal S}_n^f$, obtained by linear interpolation of
$\mathcal S_n^f$ at $0, 1/n, 2/n,\ldots,1$.
\end{proposition}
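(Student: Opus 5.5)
The plan is to reduce everything to the endpoint estimate \eqref{ns_l2} of Theorem~\ref{ns_max}, applied to the single function $h=f-g$. By linearity of the partial sums in the function,
\begin{align*}
 \mathcal S_n^f(t)-\mathcal S_n^g(t)
 =\frac{1}{\sqrt n}\sum_{k=1}^{\lfloor nt\rfloor}h(Z_k),
\end{align*}
and $h\in L^2(\ga)$ has Hermite rank at least $d$ by hypothesis. The step function $t\mapsto \lfloor nt\rfloor$ takes exactly the values $0,1,\dots,n$ on $[0,1]$. Hence
\begin{align*}
 \|\mathcal S_n^f-\mathcal S_n^g\|_\infty
 =\frac{1}{\sqrt n}\max_{0\le m\le n}\Big|\sum_{k=1}^m h(Z_k)\Big|.
\end{align*}
Squaring, taking expectations and invoking \eqref{ns_l2} gives \eqref{path_l2} with the same constant $C_{d,K}$. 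This constant is uniform in $n$ and in the covariance matrix because the one in Theorem~\ref{ns_max} is; see the remark following that theorem.

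For \eqref{path_tail}, I will take $f=\varphi$ and $g=\varphi_M$. Then $\varphi-\varphi_M=\sum_{q>M}c_qH_q$ has Hermite rank at least $M+1\ge d$, hence at least $d$; if $M<d$, note that $\varphi_M=0$ and the claim is just the case $g=0$. By orthogonality of the Hermite basis, $\|\varphi-\varphi_M\|_2^2=\sum_{q>M}q!c_q^2$, which is the tail of the convergent series in \eqref{d_phi} and so tends to $0$. The bound from \eqref{path_l2} does not depend on $n$, so the supremum over $n$ is immediate.

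For the polygonal paths, the difference $\widehat{\mathcal S}_n^f-\widehat{\mathcal S}_n^g$ is the linear interpolation of the step path of $h$ at the nodes $k/n$. A piecewise linear function attains its supremum at nodes, so its uniform norm equals $\max_{0\le m\le n}|\sum_{k\le m}h(Z_k)|/\sqrt n$ again, and the same bound follows. The only point needing care is this identification of the sup norm with the discrete maximum, including the endpoint $t=1$. There is no real obstacle here: all the difficulty lives in Theorem~\ref{ns_max}, which we are allowed to assume.
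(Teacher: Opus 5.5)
Your proposal is correct and is essentially the paper's own argument. You identify the uniform norm of the step path with the discrete maximum over $0\le m\le n$, apply \eqref{ns_l2} to $f-g$, obtain \eqref{path_tail} from the orthogonality relation \eqref{OL_pq}, and handle the polygonal paths by noting that their difference is affine on each grid interval and so attains its supremum at a node (your remark on the case $M<d$, where $\varphi_M=0$, is a harmless detail the paper leaves implicit).
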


\begin{proof}
Applying   \eqref{ns_l2} to $f-g$  yields  \eqref{path_l2} immediately.  
The orthogonality relation \eqref{OL_pq}
then implies  \eqref{path_tail}.
On each grid interval,
the difference of the polygonal paths is affine, so its
absolute maximum is attained at an endpoint.  Thus its
supremum equals that of the difference of the step paths.
Hence, the proof is completed.
\end{proof}

For the stationary sequence in Corollary~\ref{disc_thm},
the row constant is
$K=\sum_{k\in\Z}|\rho_G(k)|^d$, and \eqref{path_tail} becomes
\begin{align}\label{bmd_tail_intro}
 \sup_n\E  \big[   \|V_n^\varphi-V_n^{\varphi_M}\|_\infty^2 \big]
 \le C_{d,K}\|\varphi-\varphi_M\|_2^2\longrightarrow0.
\end{align}
The limiting variances converge by the same covariance bound.
Uniform approximation therefore transfers functional convergence
from $V_n^{\varphi_M}$ to $V_n^\varphi$ for
$\varphi\in L^2(\ga)$.  
See Section~\ref{disc_pf} for the details.

\medskip

For continuous-time integrals in Corollary~\ref{cont_thm},
we transfer the maximal inequality by integrating over shifted
lattice fibers.  In the stationary setting, Lemma~\ref{lat_lem}
allows us to choose a lattice spacing for which the sampled
covariance is $\ell^d$-summable.  The following proposition gives
the corresponding $L^2$ path estimate for a jointly measurable
Gaussian process whose fibers satisfy a uniform row bound.

\begin{proposition}[\textsf{Uniform integral approximation}]
\label{ns_int}
Fix an integer $d\ge1$.  Let $Y=(Y(s))_{s\ge0}$ be a jointly
measurable Gaussian process with standard normal marginals.
Suppose that there is some $K<\infty$ such that
\begin{align}\label{ns_fib}
 \sup_{u\in[0,1]}\sup_{i\in\{0,1,\ldots\}}
 \sum_{j=0}^{\infty}
 \big|\E[Y(1+u+i)Y(1+u+j)]\big|^d\le K.
\end{align}
For $g\in L^2(\ga)$ with Hermite rank at least $d$, define
\begin{align}\label{int_proc}
 F_\eps^g(t)=\sqrt\eps\int_0^{t/\eps}g(Y(s))\,ds,
 \quad t\ge0.
\end{align}
These processes have almost surely continuous paths and satisfy
\begin{align}\label{ns_intbd}
 \sup_{0<\eps\le1}
 \E\left[\sup_{t\in[0,1]}|F_\eps^g(t)|^2\right]
 \le C_{d,K}\|g\|_2^2.
\end{align}
For $f,g\in L^2(\ga)$ such that $f-g$ has Hermite rank at least
$d$, the same estimate holds for $F_\eps^f-F_\eps^g$, with
$\|f-g\|_2^2$ on the right-hand side.
\end{proposition}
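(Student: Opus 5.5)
The plan is to reduce the integral to averages of discrete partial sums along shifted lattice fibers and then apply the endpoint \eqref{ns_l2}. Continuity of paths is routine. By joint measurability, Fubini, and $\E\int_0^A|g(Y(s))|\,ds=A\|g\|_1<\infty$, the integrand is a.s.\ locally integrable. Hence $t\mapsto F_\eps^g(t)$ is a.s.\ absolutely continuous.

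For the estimate, fix $\eps\in(0,1]$ and set $N=\lceil 1/\eps\rceil$. Write $T=t/\eps\le N$. I split off the initial piece $\int_0^{1}$; it is bounded by $\int_0^1|g(Y(s))|\,ds$, whose $L^2$ norm is at most $\|g\|_2$ by Minkowski, and it carries the factor $\sqrt\eps\le1$. On $[1,\infty)$ I substitute $s=1+u+k$ with $u\in[0,1)$ and $k\ge0$, giving
\begin{align*}
\int_1^{1+T} g(Y(s))\,ds=\int_0^1\sum_{k=0}^{\lfloor T\rfloor-1} g(Y(1+u+k))\,du+R_T ,
\end{align*}
where $R_T$ is a partial integral over one unit interval $[1+\lfloor T\rfloor,1+T]$. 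Taking the supremum over $t\in[0,1]$, I bound the main term by
\begin{align*}
\int_0^1\max_{0\le m\le N}\Bigl|\sum_{k=0}^{m-1}g(Y(1+u+k))\Bigr|\,du .
\end{align*}
Minkowski's integral inequality in $L^2(\Omega)$, justified by joint measurability, then gives
\begin{align*}
\Bigl\|\sup_t|\cdot|\Bigr\|_2\le\int_0^1\Bigl\|\max_{m\le N}|\cdots|\Bigr\|_2\,du .
\end{align*}
For each fixed $u$, the vector $(Y(1+u+k))_{k=0}^{N-1}$ is Gaussian with standard normal marginals, and by \eqref{ns_fib} it satisfies \eqref{ns_rows} with constant $K$, since the restricted row sums are smaller. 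So Theorem~\ref{ns_max} bounds the integrand by $C_{d,K}\sqrt N\|g\|_2$, uniformly in $u$. Multiplying by $\sqrt\eps$ and using $\eps N\le 2$ gives a bound of order $C\|g\|_2$.

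The remainder is the main obstacle: I need to control $\sup_{T\le N}|R_T|$. It is dominated by $\max_{0\le k\le N}\int_{1+k}^{2+k}|g(Y(s))|\,ds$, and a crude bound on this maximum via sums of squares costs a factor $\sqrt N$, which cancels $\sqrt\eps$ only up to a constant. Concretely, $\E\max_k(\cdot)^2\le\sum_{k\le N}\E\bigl(\int_{1+k}^{2+k}|g(Y)|\bigr)^2\le (N+1)\|g\|_2^2$ by Jensen. After multiplying by $\eps$ this is at most $2\|g\|_2^2$. So this crude bound already suffices, and no rank argument is needed for the remainder. The only subtle point is making the fiberwise decomposition measurable, and checking that $\lfloor T\rfloor$ is the same index for all $u$: it is, because each fiber point $1+u+k$ lies in $[1+k,2+k)$. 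Points between $1+\lfloor T\rfloor$ and $1+T$ are absorbed into $R_T$.

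Finally, for $f,g$ with $f-g$ of Hermite rank at least $d$, linearity gives $F_\eps^f-F_\eps^g=F_\eps^{f-g}$. The estimate applies verbatim with $\|f-g\|_2^2$ on the right-hand side, since only rank at least $d$ and square integrability were used.
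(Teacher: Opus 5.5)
Your proposal is correct and follows the paper's proof essentially step for step. You split off the initial unit cell, write the complete unit cells as an average over the lattice fibers $u\in[0,1]$, and apply the $p=2$ endpoint of Theorem~\ref{ns_max} fiberwise together with Minkowski's integral inequality. You then control the incomplete last cell by the crude bound $\E\max_k A_k^2\le\sum_k\E A_k^2$, whose $\sqrt N$ cost is absorbed by $\sqrt\eps$.

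The differences are only cosmetic:
\begin{itemize}
\item you take $N=\lceil 1/\eps\rceil$ instead of $\lceil 1/\eps\rceil-1$;
\item you do not treat $\eps=1$ separately;
\item you write $\int_1^{1+T}$ where $\int_1^{T}$ is meant, which is harmless because your bound over $T\le N$ still covers it;
\item the factor $\eps(N+1)$ is bounded by $3$, not $2$.
\end{itemize}
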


The proof of Proposition~\ref{ns_int} is given in
Section \ref{cont_pf}. In the stationary setting,
Lemma~\ref{lat_lem}, applied to $|\rho_B|^d$, yields a spacing
$a>0$ such that
\begin{align*}
 \sum_{k\in\Z}|\rho_B(ak)|^d<\infty.
\end{align*}
The rescaled process $Y(s)=B_{as}$ then satisfies
\eqref{ns_fib}, so Proposition~\ref{ns_int} provides uniform
control of Hermite tails in the path norm.
As in the proof of Corollary~\ref{disc_thm}, we first establish
tightness for finite Hermite expansions by hypercontractivity and
then pass to a general $\varphi\in L^2(\ga)$ by uniform
approximation.  Section~\ref{cont_pf} contains the lattice
selection and the complete argument.

\subsection{Self-similar Gaussian processes}
\label{ss_model}

We next retain the self-similar covariance framework of
Campese, Nourdin, and Nualart \cite{CNN20} and remove the additional
moment assumption in their functional CLTs.  The covariance
hypotheses originate in the nonstationary increment CLT of Harnett
and Nualart \cite{HN18}.  Campese, Nourdin, and Nualart extended this
analysis to continuous integral functionals and proved both
finite-dimensional limits under $L^2(\ga)$ and functional limits
under $L^p(\ga)$, $p>2$; see \cite[Theorem~1.2]{CNN20}.

Let $X=(X(t))_{t\ge0}$ be a centered, jointly measurable Gaussian
process, self-similar of order $\be\in(0,1)$, with $\|X(1)\|_2>0$.
Its covariance has the form
\begin{align}\label{ss_cov}
 \E[X(s)X(t)]=s^{2\be}\phi(t/s),
 \quad 0<s\le t.
\end{align}

\noi
We use $\phi$ for the covariance function and $\varphi$ for the
function applied to normalized increments.  
The following assumptions are taken from  \cite[p.~152]{CNN20}.

\smallskip

\textup{\bf (H.1)} There exist $\al\in(0,2\be]$ and $\ld>0$ such that
$\phi(x)=-\ld(x-1)^\al+\psi(x)$ for $x\ge1$, where $\psi$ is
twice differentiable on an open set containing $[1,\infty)$ and,
for a constant $C>0$ and $x>1$,
\begin{align}\label{ss_h1}
 |\psi'(x)|\le Cx^{\al-1},
 \quad |\psi''(x)|\le Cx^{-1}(x-1)^{\al-1}.
\end{align}
If $\al\ge1$, we also require $\psi'(1)=\be\psi(1)$.

\smallskip

\textup{\bf (H.2)} There exist $c>1$, $\nu\in(1,2]$, and $C>0$ such
that, for $x\ge c$,
\begin{align}\label{ss_h2}
 |\phi'(x)|&\le C
 \begin{cases}
 x^{-\nu},&\al<1,\\
 x^{\al-2},&\al\ge1,
 \end{cases}
 &|\phi''(x)|&\le C
 \begin{cases}
 x^{-\nu-1},&\al<1,\\
 x^{\al-3},&\al\ge1.
 \end{cases}
\end{align}
Here $\al$ describes the local increment variance, whereas $\nu$
controls correlations between increments at widely separated times.
The parameter $\al$ need not equal $2\be$.

For $\eps>0$, define the standardized increments
\begin{align}\label{ss_inc}
 \Delta_\eps X(s)=X(s+\eps)-X(s),
 \quad
 Y_\eps(s)=\frac{\Delta_\eps X(s)}{\|\Delta_\eps X(s)\|_2},
 \quad
 Y(s)=Y_1(s),
\end{align}
and the continuous integral processes
\begin{align}\label{ss_proc}
 \widetilde F_\eps^\varphi(t)
 =\frac{1}{\sqrt\eps}\int_0^t\varphi(Y_\eps(s))\,ds,
 \quad F_\eps^\varphi(t)
 =\sqrt\eps\int_0^{t/\eps}\varphi(Y(s))\,ds.
\end{align}
The denominators are positive, the integral paths are almost surely
continuous, and self-similarity gives
\begin{align}\label{ss_law}
 \widetilde F_\eps^\varphi
 \overset{\mathrm{law}}=F_\eps^\varphi
 \quad\text{in }\mathbf{C}([0,1]);
\end{align}
the verification, valid for   $\varphi\in L^2(\ga)$, is given
at the beginning of Section~\ref{selfsim}.  The asymptotic variance
is expressed through
\begin{align}\label{a_def}
 a_\al(h)=\frac{1}{2}\bigl(
 |h+1|^\al+|h-1|^\al-2|h|^\al\bigr),\quad h\in\R,
\end{align}
the covariance of unit increments of fractional Brownian motion
with Hurst index $\frac{\al}{2}$.

\begin{corollary}[\textsf{Subcritical self-similar  FCLT}]\label{ss_thm}
Let $X$ be the self-similar Gaussian process as above, satisfying
\textup{\bf(H.1)}--\textup{\bf(H.2)}, and let $\varphi\in L^2(\ga)$ have
Hermite rank $d\geq 1$
as in
\eqref{d_phi}.  Suppose that
$\al<2- \frac{1}{d}$.  Then
\begin{align}\label{ss_fclt}
 \widetilde F_\eps^\varphi\LRA\s_{\varphi,\al}\cW
 \quad\text{in }\mathbf{C}([0,1]),\quad \eps\downarrow0,
\end{align}
where $\cW$ is standard Brownian motion and
\begin{align}\label{ss_var}
 \s_{\varphi,\al}^2
 =\sum_{q=d}^{\infty}q!c_q^2
   \int_\R a_\al(h)^q\,dh\in[0,\infty).
\end{align}
The same conclusion holds for $F_\eps^\varphi$.
\end{corollary}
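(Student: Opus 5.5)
The strategy follows the template of Corollary~\ref{cont_thm}. The self-similar structure enters only through a uniform fiber row bound for the unit-increment process $Y$. By \eqref{ss_law} it suffices to treat $F_\eps^\varphi$, since convergence in law in $\mathbf C([0,1])$ transfers to $\widetilde F_\eps^\varphi$.

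\textbf{Step 1: the fiber row bound.} The first step is to show that, for a suitable lattice spacing $a>0$, the rescaled process $Y^{(a)}(s)=Y(as)$ satisfies \eqref{ns_fib} with some finite $K$, after discarding an initial segment $[0,s_0]$. The discarded piece contributes at most $\sqrt\eps\int_0^{s_0}|\varphi(Y(s))|\,ds$ to the path norm, which is $O(\sqrt\eps\,\|\varphi\|_2)$ in $L^2$.

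To obtain the bound, I would use \textbf{(H.1)}--\textbf{(H.2)}, as in \cite{HN18,CNN20}, to estimate the correlation $r(s,t)=\E[Y(s)Y(t)]$ for $1\le s\le t$:
\begin{align*}
 |r(s,t)|\le C\Bigl(|a_\al(t-s)|+\text{error}(s,t)\Bigr).
\end{align*}
The main term $|a_\al(h)|\sim C|h|^{\al-2}$ comes from the $-\ld(x-1)^\al$ part of $\phi$. The error term, coming from $\psi$ and from the far-field decay via $\nu>1$, should be of order $s^{-\kappa}$ or $t^{-\nu}s^{\nu-1}$-type, and I expect it to be summable in $j$ uniformly in $i$.

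The condition $\al<2-\frac1d$ is exactly $d(2-\al)>1$. This gives $\sum_h|a_\al(h)|^d<\infty$ over lattice points, and uniformly over shifts $u\in[0,1]$, because $a_\al$ is bounded and decays like $|h|^{\al-2}$.

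This uniform-in-$i$ estimate, particularly the control of the nonstationary error near the diagonal and for small $s$, is the main obstacle. My first attempt would be to reuse the covariance lemmas of \cite{CNN20}, which already bound $\sum_j|r|^d$ in their variance computation, and to check that those bounds are uniform in the starting index.

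\textbf{Step 2: finite Hermite expansions.} For $\varphi_M=\sum_{q=d}^M c_qH_q$, the finite-dimensional convergence of $F_\eps^{\varphi_M}$ to $\s_{\varphi_M,\al}\cW$ is given by \cite[Theorem~1.2]{CNN20}, which needs only $L^2(\ga)$ for fdd limits. For tightness, I would compute the increments $\E|F_\eps^{\varphi_M}(t)-F_\eps^{\varphi_M}(s)|^2$ via \eqref{OL_pq} and the row bound of Step~1. This gives the bound $C(t-s)$ when $t-s\ge\eps$, and a trivial bound $C(t-s)^2/\eps\le C(t-s)$ otherwise. Hypercontractivity on the finite chaos sum then upgrades this to a $p$-th moment bound $C_M(t-s)^{p/2}$ with $p>2$, and Kolmogorov's criterion yields tightness in $\mathbf C([0,1])$.

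\textbf{Step 3: passage to general $\varphi$.} Proposition~\ref{ns_int}, applied to $f=\varphi$ and $g=\varphi_M$ with the process of Step~1 (absorbing the spacing $a$ into $\eps$), gives
\begin{align*}
 \sup_{\eps}\E\|F_\eps^\varphi-F_\eps^{\varphi_M}\|_\infty^2\le C_{d,K}\sum_{q>M}q!c_q^2\to0.
\end{align*}
Since $\s_{\varphi_M,\al}^2\to\s_{\varphi,\al}^2$ by dominated convergence in \eqref{ss_var}, the standard approximation theorem \cite[Theorem~3.2]{Bil99} transfers convergence in law from $F_\eps^{\varphi_M}$ to $F_\eps^\varphi$. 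This yields \eqref{ss_fclt} for $F_\eps^\varphi$, and by \eqref{ss_law} also for $\widetilde F_\eps^\varphi$.
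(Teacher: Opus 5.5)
Your proposal is correct and follows the paper's route. Lemma~\ref{ss_env} derives the fiber row bound from \cite[Lemmas~4.1--4.3]{CNN20} as the envelope $|\E[Y(s)Y(t)]|\le C(1+|t-s|)^{-\kappa}$ for $s,t\ge1$, and the proof of Corollary~\ref{ss_thm} then combines \cite[Theorem~1.2\textup{(1)}]{CNN20}, hypercontractive Kolmogorov tightness for $\varphi_M$ after removing $[0,1]$, Proposition~\ref{ns_int}, and \cite[Theorem~3.2]{Bil99} exactly as you outline. The one refinement concerns Step~1: the $s^{-\kappa}$-type errors from \cite[Lemma~4.2]{CNN20} are used only for comparable times $t-s\lesssim s$, where they are dominated by $(t-s)^{\al-2}$ (an error that did not decay in $t-s$ would not be row-summable), and because the resulting envelope decreases in $|t-s|$, the unit lattice already works and no spacing $a$ is needed.
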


Thus the covariance hypotheses, subcritical threshold, and limiting
variance are the same as in \cite[Theorem~1.2\textup{(1)}]{CNN20};
only the extra moment assumption for convergence in $\mathbf{C}([0,1])$ is
removed.  In the boundary case $\al=2- \frac{1}{d}$, Theorem~\ref{ss_crit}
gives the corresponding finite-variance limit after division by
$\sqrt{\log(1/\eps)}$, with variance \eqref{crit_var} determined
by the leading Hermite coefficient.

The same approximation argument applies here once the
nonstationary covariance is controlled.  Combining the covariance
estimates of \cite[Lemmas~4.1--4.3]{CNN20}, Lemma~\ref{ss_env}
yields, under \textbf{(H.1)}--\textbf{(H.2)} and
$\al<2-\frac{1}{d}$,
\begin{align}\label{ss_preview}
 |\E[Y(s)Y(t)]|
 \le C(1+|t-s|)^{-\kappa},
 \quad s,t\ge1,
 \quad \kappa d>1,
\end{align}
where
 $\kappa=  \min\{2-\al,\nu\}$ if $\al<1$
 and $\kappa =  2-\al$ if $\al\ge1$.
 This envelope gives uniformly bounded
covariance-power row sums on every unit-lattice fiber, so that
Proposition~\ref{ns_int} can be applied.
Finite-dimensional convergence for the truncated Hermite expansion
follows from \cite[Theorem~1.2\textup{(1)}]{CNN20}, while tightness is
obtained from a direct application of hypercontractivity.
At the critical threshold,
we isolate the leading chaos and use the same maximal inequality
to remove all higher chaoses after logarithmic normalization.
The details are given in Section~\ref{selfsim}.

The critical limit requires an additional argument for the leading
Hermite chaos.  We state it separately below; its covariance and
logarithmic normalization are proved in Section~\ref{ss_boundary}.

\begin{theorem}[\textsf{Critical self-similar FCLT}]\label{ss_crit}
Let $X$ satisfy \textup{\bf(H.1)}--\textup{\bf(H.2)} as above,
let $\varphi\in L^2(\ga)$ have expansion \eqref{d_phi}, and suppose that
$\al=2- \frac{1}{d}$.  Then
\begin{align}\label{crit_fclt}
 \frac{\widetilde F_\eps^\varphi}{\sqrt{\log(1/\eps)}}
 \LRA\s_*\cW
 \quad\text{in }\mathbf{C}([0,1]),\quad \eps\downarrow0,
\end{align}
where
\begin{align}\label{crit_var}
 \s_*^2=2d!c_d^2
 \left(\frac{\al(\al-1)}{2}\right)^d =   2d!c_d^2
 \left(\frac{(2d-1)(d-1)}{2d^2}\right)^d.
\end{align}
The same conclusion holds for $F_\eps^\varphi$.
When $d=1$, the limit is the zero process.
\end{theorem}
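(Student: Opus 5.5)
Using the law identity \eqref{ss_law}, it suffices to prove the statement for $F_\eps^\varphi$. We split $\varphi=c_dH_d+R$, where $R=\sum_{q>d}c_qH_q$ has Hermite rank at least $d+1$. The critical threshold $\al=2-\frac1d$ is critical only for the $d$th chaos. For $q\ge d+1$ we have $\kappa q\ge(2-\al)(d+1)=\frac{d+1}{d}>1$ whenever $\kappa=2-\al$, which is the relevant case since $\al=2-\frac1d\ge1$. The envelope \eqref{ss_preview}, which I expect Lemma~\ref{ss_env} to give in the form $|\E[Y(s)Y(t)]|\le C(1+|t-s|)^{-(2-\al)}$ for $s,t\ge1$, therefore makes the fiber row sums of $|\E[Y(1+u+i)Y(1+u+j)]|^{d+1}$ uniformly bounded. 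The initial segment $s\in[0,1]$ contributes $\sqrt\eps\int_0^1|\varphi(Y(s))|\,ds$, whose $L^2$ norm is at most $\sqrt\eps\|\varphi\|_2\to0$, so it is negligible. Proposition~\ref{ns_int} with rank $d+1$ then gives
\begin{align*}
\E\Big[\sup_{t\le1}|F_\eps^{R}(t)|^2\Big]\le C\|R\|_2^2,
\end{align*}
and after division by $\log(1/\eps)$ this vanishes. It therefore remains to treat the single chaos $c_dH_d$.

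For the leading chaos, the plan is to compute the covariance
\begin{align*}
\E[F_\eps^{H_d}(t)F_\eps^{H_d}(s)]=\eps\, d!\int_0^{t/\eps}\!\!\int_0^{s/\eps}\E[Y(u)Y(v)]^d\,du\,dv .
\end{align*}
From (H.1), by the computation of \cite[Lemmas~4.1--4.3]{CNN20}, the increment correlation behaves like $a_\al(u-v)\sim\frac{\al(\al-1)}{2}|u-v|^{\al-2}$ in the regime $1\ll |u-v|\ll u$. With $\al=2-\frac1d$ this gives $\rho^d\sim(\frac{\al(\al-1)}2)^d|h|^{-1}$. Integrating over $|h|$ between $1$ and $t/\eps$ produces $2(\frac{\al(\al-1)}2)^d\log(1/\eps)\,(s\wedge t)$ at leading order. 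The nonstationary corrections, namely $\psi$, the region $|u-v|\gtrsim u$, and the region near the origin, should contribute $O(1)$ or $o(\log(1/\eps))$ after the $\eps$ normalization; this uses the (H.2) decay $x^{\al-2}$ for $\phi'$. This yields the covariance $\s_*^2(s\wedge t)$ in \eqref{crit_var}. When $d=1$ we have $\al=1$, so $\al(\al-1)=0$ and the limit is zero. Gaussianity of the finite-dimensional limits follows from the fourth moment theorem \cite{NP05,PT05}. Its contraction norms $\|f\otimes_r f\|$ for $1\le r\le d-1$ should be $o(\log^2(1/\eps))$, which follows by the standard Breuer--Major contraction estimate using the envelope: the contraction integrals gain a power relative to $\log$.

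Tightness of $F_\eps^{H_d}/\sqrt{\log(1/\eps)}$ comes from hypercontractivity on the fixed $d$th chaos. I will aim for the increment bound
\begin{align*}
\E|F_\eps^{H_d}(t)-F_\eps^{H_d}(s)|^2\le C|t-s|\log\Big(\frac{2}{\eps}\Big)
\end{align*}
and also a bound of order $C|t-s|\log(2/|t-s|)$. Then
\begin{align*}
\E\big|(F(t)-F(s))/\sqrt{\log(1/\eps)}\big|^4\le C|t-s|^{2}\log^2(2/|t-s|)\le C|t-s|^{3/2},
\end{align*}
and Kolmogorov--Chentsov gives tightness. The increment bound follows from the envelope: the double integral of $(1+|u-v|)^{-1}$ over a square of side $|t-s|/\eps$ is at most $C\frac{|t-s|}{\eps}\log(2+|t-s|/\eps)$, and after multiplying by $\eps$ and using $\log(2+x/\eps)\le C\log(2/\eps)$ this is controlled uniformly. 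I expect the main obstacle to be this second step: extracting the precise logarithmic constant from the nonstationary covariance, uniformly in the base point $u$, and showing that the boundary regions where (H.1) rather than the stationary asymptotics dominates do not contribute at order $\log(1/\eps)$.
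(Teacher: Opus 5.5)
Your proposal is correct and follows the paper's proof essentially step for step. The common steps are: the split $\varphi=c_dH_d+R$, with Proposition~\ref{ns_int} applied at rank threshold $d+1$ via $\kappa(d+1)=\frac{d+1}{d}>1$; the covariance asymptotics obtained by comparing $\Phi^d$ with the stationary kernel $a_\al^d$ (in Lemma~\ref{ss_critcov} the off-diagonal region $h>c_0s$ and the nonstationary corrections contribute only $O(L)$, as you anticipate); the fourth-moment contraction criterion; and hypercontractive Kolmogorov tightness.

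Two small slips should be fixed. First, the contraction requirement is $\|f\otimes_r f\|=o(\log(1/\eps))$, not $o(\log^2(1/\eps))$ for the norm itself; equivalently it is $o(\log^2(1/\eps))$ for the squared norm, and the paper in fact obtains $O(1)$ for the squared norm via Young's inequality. Second, the unnormalized bound $\E|F_\eps^{H_d}(t)-F_\eps^{H_d}(s)|^2\le C|t-s|\log(2/|t-s|)$ is false, since this variance grows like $\log(1/\eps)$. It is also unnecessary: your other estimate already gives the normalized bound $C|t-s|$, which is exactly what the paper uses.
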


At the end of this Introduction, we provide further remarks and related results.

\begin{remark}\rm

(i) The counterexamples of Chambers and Slud
\cite[pp.~328--329]{CS89} concern Gaussian shift functionals
that may depend on the entire process through spectral
kernels.  In the relevant example, Brownian finite-dimensional
convergence already fails because the normalized variance is
not asymptotically linear in time.  This differs from the
one-coordinate Breuer--Major setting considered here; see also
\cite[Remark~1.6(iv)]{MPZ26}.

\smallskip
(ii) Benning and Nourdin \cite[Theorem~2.8]{BN26} obtain
rough H\"older convergence of the canonical lifts of scalar
Gaussian-subordinated walks under $\rho_G\in\ell^d(\Z)$ and
$\varphi\in L^p(\ga)$ with $p>6$.
Their results and the rough functional theorem of Altman,
Klose, and Perkowski \cite{AKP26} complement our finite-variance
limits in the classical $J_1$ and uniform topologies.

\end{remark}

\medskip
\noi
$\bullet$ \textbf{Organization.}
Section~\ref{tech} collects the auxiliary estimates for the $L^2$
endpoint: Gaussian estimates, separated diagram bounds, and an
adjacent-block maximal lemma.  In Section~\ref{lp_sec}, we prove
Theorem~\ref{ns_max} for $p>2$ using the Malliavin--Meyer block
bound and dyadic summation.  Section~\ref{ns_sec} treats the critical
case $p=2$ by rank-preserving truncation, adjacent-block control,
and $L^2$ restoration.
Section~\ref{stat_pf} contains the proofs of the stationary discrete-
and continuous-time BMD principles.  Section~\ref{selfsim}
verifies the covariance
bounds for the self-similar model and proves the corresponding
subcritical and critical limits.  Finally, Appendix~\ref{app:aux}
contains the proofs of the elementary Gaussian estimates.

\section{Auxiliary estimates for the \texorpdfstring{$L^2$}{L2} endpoint}
\label{tech}

The estimates below are used in the endpoint proof of
Theorem~\ref{ns_max}.  The higher-moment argument is given
separately in Section~\ref{lp_sec}.
Lemma~\ref{rearr_lem} controls the low Hermite coefficients created
by truncation and squaring.  Lemma~\ref{diag_lem} bounds the
separated Gaussian products in the adjacent-block estimate of
Proposition~\ref{adj_prop}.  Finally, Lemma~\ref{max_lem} converts
that estimate into maximal second-moment control.

\subsection{Gaussian estimates}
\label{rearr}

We record two elementary Gaussian estimates below.

\begin{lemma}
\label{rearr_lem}
For each fixed integer  $q\ge0$, there is a constant $C_q$ such that the
following
statements hold.

\smallskip
\textup{(i)} If $A\subset\R$ is measurable and
$p=\ga(A)\in (0, \frac{1}{2}]$, then
\begin{align}\label{set_rearr}
  \int_A h_q(x)^2\,\ga(dx)
  \le C_qp\bigl(1+\log(1/p)\bigr)^q.
\end{align}

\textup{(ii)} Suppose  $U$ is a nonnegative random variable
with  $U\le M$ and $\E[U]\le C_0$
for some deterministic constants $C_0$ and $M\geq 2$.
Then,
\begin{align}\label{wt_rearr}
  \E[U|h_q(\cZ)|]
  \le C_{q,C_0}(1+\log M)^{\frac{q}{2}},
\end{align}
where $\cZ\sim\cN(0,1)$.
\end{lemma}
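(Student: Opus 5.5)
For part (i), we first reduce to tail sets by a rearrangement (bathtub) argument. The weight $h_q^2$ is a polynomial of degree $2q$, so it is bounded by $C_q(1+|x|^{2q})$. Among measurable sets $A$ with $\ga(A)=p$, the integral $\int_A(1+|x|^{2q})\,\ga(dx)$ is maximized when $A$ is the superlevel set of the even, radially increasing function $1+|x|^{2q}$. That superlevel set is the symmetric tail $\{|x|>t_p\}$, where $t_p$ is defined by $\ga(|x|>t_p)=p$. It therefore suffices to bound
\[
  p+\int_{|x|>t_p}|x|^{2q}\,\ga(dx).
\]
The standard Gaussian tail estimates give $t_p\le C\sqrt{1+\log(1/p)}$ for $p\le\frac12$. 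Integrating by parts, or using Mills' ratio, gives $\int_t^\infty x^{2q}e^{-x^2/2}\,dx\le C_q(1+t)^{2q-1}e^{-t^2/2}$ for $t\ge0$. Combining this with $e^{-t^2/2}\le C(1+t)\,\ga(|x|>t)$ yields
\[
  \int_{|x|>t_p}|x|^{2q}\,d\ga\le C_q(1+t_p)^{2q}p.
\]
Substituting the bound on $t_p$ gives \eqref{set_rearr}. For $p$ bounded away from $0$, say $p\in[p_0,\frac12]$, the trivial bound $\int_A h_q^2\,d\ga\le1$ already suffices.

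For part (ii), we split according to the size of $|h_q(\cZ)|$. Fix a threshold $s=c_q(1+\log M)^{q/2}$. On the event $\{|h_q(\cZ)|\le s\}$ we use $\E[U|h_q|\ind]\le s\,\E[U]\le C_0 s$, which has the required order. On the complementary event $\{|h_q(\cZ)|>s\}$ we use $U\le M$, so this contribution is at most
\[
  M\,\E\bigl[|h_q(\cZ)|\ind_{\{|h_q(\cZ)|>s\}}\bigr].
\]
Since $|h_q(x)|\le C_q(1+|x|)^q$, the event $\{|h_q(\cZ)|>s\}$ forces $|\cZ|>r$ with $r\asymp (s/C_q)^{1/q}$. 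By choosing $c_q$ large, we can make $r\ge\sqrt{2\log M}+1$. Then
\[
  M\,\E\bigl[(1+|\cZ|)^q\ind_{\{|\cZ|>r\}}\bigr]\le M\,C_q(1+r)^{q-1}e^{-r^2/2}.
\]
Since $e^{-r^2/2}\le M^{-1}e^{-r}$ when $r\ge\sqrt{2\log M}+1$, this contribution is $O_q(1)$. Adding the two pieces gives \eqref{wt_rearr}, with a constant depending only on $q$ and $C_0$. Here we use $M\ge2$ so that $1+\log M\ge1$.

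Alternatively, (ii) follows from (i) by a layer-cake argument in $U$: the worst case is $U=M\ind_A$ with $\ga(A)=C_0/M$. Either route works. The main technical point is the tail calculation in (i): we must verify that the constant is uniform over $p\in(0,\frac12]$ and that the power of $\log(1/p)$ is exactly $q$. This amounts to the careful Mills-ratio comparison between $e^{-t^2/2}$ and the tail probability $p$, which we carry out explicitly.
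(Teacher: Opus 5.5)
Your proposal is correct. Part (ii) follows essentially the same route as the paper: both split at a threshold of order $\sqrt{\log M}$, bound the lower part by the threshold times $\E[U]\le C_0$, and bound the upper part by $M$ times a Gaussian tail moment. The differences are minor. The paper thresholds directly on $|\cZ|$ at $R=2\sqrt{\log M}$ and bounds $\E[|\cZ|^q\ind_{\{|\cZ|>R\}}]$ by Cauchy--Schwarz, which costs the factor $2$ in $R$. You threshold on $|h_q(\cZ)|$ and evaluate the tail moment by Mills' ratio.

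There is one harmless slip. The condition $r\ge\sqrt{2\log M}+1$ only gives $e^{-r^2/2}\le\sqrt{e}\,M^{-1}e^{-r}$, because $r^2/2-r=((r-1)^2-1)/2$. Also, $q=0$ should be set aside as trivial, since $(s/C_q)^{1/q}$ is undefined there. Your alternative reduction of (ii) to (i) would first require replacing $U$ by $\E[U\mid\cZ]$, because $U$ need not be $\sigma(\cZ)$-measurable. It would then need a bathtub principle for functions $0\le u\le M$. Your primary route avoids both.

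For part (i) your argument genuinely differs from the paper's. You use the bathtub principle to reduce to the extremal set $\{|x|>t_p\}$. You then need two-sided Mills-ratio estimates: an upper bound for $\int_t^\infty x^{2q}e^{-x^2/2}\,dx$, and the reverse bound $e^{-t^2/2}\le C(1+t)\,\ga(|x|>t)$ to convert back to $p$. The paper uses the layer-cake identity $\int_A|x|^{2q}\,d\ga=2q\int_0^\infty r^{2q-1}\ga(A\cap\{|x|>r\})\,dr$. It combines this with the pointwise bound $\ga(A\cap\{|x|>r\})\le\min\{p,2e^{-r^2/2}\}$ and splits at $2e^{-R^2/2}=p$. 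This is in effect the rearrangement applied level by level, and it needs only the upper Gaussian tail bound. Your version exhibits the extremal set explicitly, which makes clear why the power $q$ of the logarithm appears. The paper's version is shorter and needs neither the rearrangement lemma nor the lower Mills bound.
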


Part~\textup{(i)} controls the small-set integrals in the coefficient
estimate \eqref{beta_bd}; part~\textup{(ii)} controls the low-order
projections of $T_n^2$ in Lemma~\ref{sq_lem}.  Their logarithmic
growth is needed when the truncation level tends to infinity.
The proof of Lemma~\ref{rearr_lem} is deferred to Appendix~\ref{app:aux}.

\subsection{Separated diagram estimates}
\label{diag}

We supply the separated estimates used in Step~\eqref{step_diagrams}.
The common-rank estimates below are scalar specializations of
Bardet and Surgailis \cite[Lemma~1]{BS13a}, whose result already
allows nonstationary Gaussian arrays and varying functions,
and thus extends the diagram moment bound of Taqqu
\cite[Lemma~4.5]{Taq77}.  Here a diagram records the pairings of
Gaussian factors in the Hermite expansion of a product.

\begin{lemma}[\textsf{Separated diagram bounds}]\label{diag_lem}
Let $\mathcal{I}$ be a countable index set,
and let
 $(Z_j)_{j\in\mathcal I}$ be a   centered Gaussian
family with standard normal marginals.
Define
$r(i,j)=\E[Z_iZ_j]$ for $i,j\in\mathcal{I}$.
 Suppose that there is some $\eps\in(0,\frac{1}{3})$ such that
\begin{align}\label{small_c1}
 \sup_{i\ne j}|r(i,j)|\le\eps ,
\end{align}
and
\begin{align}\label{small_c2}
   K_d:=\sup_{i\in\mathcal I}
       \sum_{j\in\mathcal I}|r(i,j)|^d<\infty.
\end{align}
Given $A\subset\mathcal I$  with cardinality
$|A|  \leq m$,
the following statements hold, with
a constant $C$ depending only on $d$, $\eps$, and $K_d$,
where
a prime on a multiple sum means that all
displayed indices are pairwise distinct.

\smallskip
\textup{(i)} If $f_1, f_2, f_3, f_4\in L^2(\ga)$ have Hermite rank at
least
$d$, then
\begin{align}\label{diag4}
  \sum_{t_1,\ldots,t_4\in A}'
  \left|\E\left[\prod_{i=1}^4f_i(Z_{t_i})\right]\right|
  \le Cm^2\prod_{i=1}^4\norm{f_i}_2.
\end{align}

\textup{(ii)} If $f_0,f_1,f_2\in L^2(\ga)$ all have Hermite rank at least
$d$, then
\begin{align}\label{diag3}
 \sum_{x,y,z\in A}'
 |\E[f_0(Z_x)f_1(Z_y)f_2(Z_z)]|
 \le Cm^{\frac{3}{2}}\prod_{i=0}^2\norm{f_i}_2.
\end{align}

\textup{(iii)}    Let $1\le s<d$ and $B=bh_s$ for some $b\in\R$,
where $h_s$ is given as in \eqref{def_hq}.
Let $f_1,f_2\in L^2(\ga)$ have
Hermite rank at least $d$.  Then
\begin{align}\label{diag_mix}
 \sum_{x,y,z\in A}'
 |\E[B(Z_x)f_1(Z_y)f_2(Z_z)]|
 \le C |b|    m^{2-\frac{s}{2d}}\norm{f_1}_2\norm{f_2}_2.
\end{align}

\end{lemma}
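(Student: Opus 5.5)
We expand each function in Hermite polynomials and evaluate the mixed moments with the diagram formula. The smallness condition \eqref{small_c1} with $\eps<\frac13$ is what makes the resulting series over all Hermite degrees converge. Write $f_i=\sum_{q\ge d}a_{i,q}h_q$. For pairwise distinct coordinates, the diagram formula gives $\E[\prod_i h_{q_i}(Z_{t_i})]$ as a sum over multigraphs without self-loops. Each graph has vertex degrees $q_i$, and its weight is $\prod_{\text{edges}} r(t_a,t_b)$ times a combinatorial factor. The normalized combinatorial weights are dominated by multinomial-type counts. Since at most $(q_1+\dots+q_k)/2$ edges appear and each $|r|\le\eps$, a Cauchy--Schwarz over the degrees converts the Hermite-coefficient sums into $\prod\|f_i\|_2$. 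The price is a factor $\eps^{(\sum q_i-\text{(minimal degree pattern)})/2}$ times a combinatorial growth bounded by $(\text{const}\cdot\eps)^{\text{excess}}$, which is summable for $\eps<1/3$ (this is the Taqqu / Bardet--Surgailis mechanism). Once this is done, every term is controlled by a product over edges $|r(t_a,t_b)|^{n_{ab}}$, where the multiplicities $n_{ab}$ satisfy $\sum_b n_{ab}\ge d$ at every vertex $a$. Parts (i) and (ii) are precisely the scalar version of \cite[Lemma~1]{BS13a}, so I would cite them and reproduce only the combinatorial reduction needed for (iii).

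For the index sums, the key elementary tool is that $|r|\le1$, so we may lower exponents freely. For a vertex $a$ with incident multiplicities $n_{ab}$ summing to at least $d$, Hölder gives $\prod_b|r(t_a,t_b)|^{n_{ab}}\le\sum_b|r(t_a,t_b)|^{\,\cdot}$-type bounds with total exponent $d$ distributed as $d\,(n_{ab}/\sum n)$. Summing over $t_b\in A$ then uses either \eqref{small_c2}, which gives $\sum_j|r(i,j)|^d\le K_d$, or, for exponents $\theta<d$, Hölder: $\sum_{j\in A}|r(i,j)|^\theta\le K_d^{\theta/d}m^{1-\theta/d}$. For four vertices the connected patterns are the pairings (two edges, giving $\le K_d^2m^2$ after the $m^2$ free starts) and the 4-cycle or heavier graphs (giving $O(m)$ or $O(m^2)$). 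Together these give (i). For three vertices each of degree $\ge d$, the graph is a triangle with multiplicities $n_{xy}+n_{xz}\ge d$ and so on. Writing the weight as a geometric mean of the three pairs gives $\sum'\le\sum_{x,y,z}(|r_{xy}||r_{xz}||r_{yz}|)^{d/2}$, which is $\le Cm^{3/2}$ by Cauchy--Schwarz, yielding (ii).

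For (iii), $B$ lies in the single chaos $s<d$, so the vertex $x$ has degree exactly $s$, while $y$ and $z$ have degree $\ge d$. Hence $n_{xy}+n_{xz}=s$, $n_{xy}+n_{yz}\ge d$, and $n_{xz}+n_{yz}\ge d$, which forces $n_{yz}\ge d-s+\max(\dots)$. The hard step, which I expect to be the main obstacle, is extracting the gain $m^{-s/(2d)}$ uniformly over the split $n_{xy}=k$, $n_{xz}=s-k$. I would bound the $x$-sum with Hölder exponents $k$ and $s-k$. We have $\sum_x|r_{xy}|^k|r_{xz}|^{s-k}\le(\sum_x|r_{xy}|^{s})^{k/s}(\sum_x|r_{xz}|^{s})^{(s-k)/s}\le K_d^{s/d}m^{1-s/d}$. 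The remaining factor is $|r_{yz}|^{n_{yz}}\le|r_{yz}|^{d-s}$ only, which is too weak on its own. So I would instead keep $n_{yz}\ge d-\min(k,s-k)$ and interpolate: apply Cauchy--Schwarz in $(y,z)$ between the trivially bounded sum and the $|r_{yz}|^d$ row sum. I expect this to give $m\cdot m^{1-s/(2d)}$, i.e. $m^{2-s/(2d)}$. The $\|f_i\|_2$ and $|b|$ factors come from the same Hermite-coefficient summation as before, and the chaos-$s$ coefficient of $B$ is just $b$.
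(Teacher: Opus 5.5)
Your key observation for (iii) is right, and citing \cite{BS13a} for (i)--(ii) is what the paper does. The observation is that the $(y,z)$ multiplicity satisfies $n_{yz}\ge d-\min(k,s-k)$, exactly the paper's \eqref{c_low}. But the step you flag as the main obstacle fails as written.

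After your correct bound $\sum_{x}|r_{xy}|^{k}|r_{xz}|^{s-k}\le K_d^{s/d}m^{1-s/d}$, Cauchy--Schwarz in $(y,z)$ only gives $\sum_{y\ne z}|r_{yz}|^{d/2}\le K_d^{1/2}m^{3/2}$. The total is then of order $m^{5/2-s/d}$. This exceeds $m^{2-\frac{s}{2d}}$ by the factor $m^{(d-s)/(2d)}$. It even exceeds $m^2$ when $s<d/2$, which would break the comparison with $N^{1-\al}m^{1+\al}$ in Proposition~\ref{adj_prop}.

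What is needed is H\"older with the exponent matched to the retained power $c_0:=d-\min(k,s-k)$:
\[
\sum_{y\ne z\in A}|r_{yz}|^{c_0}\le (m^2)^{1-\frac{c_0}{d}}\,(mK_d)^{\frac{c_0}{d}}\le C\,m^{1+\frac{\min(k,s-k)}{d}} .
\]
Combined with your $x$-sum, this gives $m^{2-\max(k,s-k)/d}\le m^{2-\frac{s}{2d}}$.

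For the sum over Hermite orders, write $|r_{yz}|^{c}\le\eps^{c-c_0}|r_{yz}|^{c_0}$, which is legitimate since $c\ge c_0$. You then need the three-point diagram coefficient to grow only polynomially, like $C_s(1+c)^{s/2}$; the paper computes it exactly via the product formula, see \eqref{wick3}. Then $\sup_c(1+c)^{s/2}\eps^{c-c_0}<\infty$, and Cauchy--Schwarz in $c$ yields $\|f_1\|_2\|f_2\|_2$.

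With this repair, your argument is a valid variant of the paper's that differs only in the order of summation. The paper sums first over the high-rank vertex $y$. It pairs $|r(x,y)|^{\al}$ with $|r(y,z)|^{d-\al}$ by H\"older, getting a bound $K_d$ with no growth in $m$. It then sums $|r(x,z)|^{\be}$ with $\be=\max(\al,\be)\ge s/2$, getting $m^{2-\be/d}$. You instead sum the low-chaos vertex $x$ first at a cost $m^{1-s/d}$, and recover that loss from the retained power on the $(y,z)$ edge. Both routes give the same exponent $2-\max(k,s-k)/d$.

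Two further points need attention:
\begin{itemize}
\item You still need to pass from finite Hermite expansions to general $f_1,f_2\in L^2(\ga)$. Absolute convergence of the coefficient series does not by itself identify its sum with $\E[B(Z_x)f_1(Z_y)f_2(Z_z)]$. The paper uses Gershgorin with $\eps<\frac13$ to get a density in $L^2(\ga^{\otimes 3})$ for each distinct triple, which makes the trilinear expectation $L^2$-continuous.
\item Your side sketch for (ii) is wrong as stated. The multiplicities need not all be at least $d/2$; for example $(n_{xy},n_{xz},n_{yz})=(d,0,d)$ is admissible, and the termwise geometric-mean bound then fails. This is harmless only because you cite \cite{BS13a} there.
\end{itemize}
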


Part~\textup{(i)} is used for four distinct indices in
Proposition~\ref{adj_prop}, and part~\textup{(ii)} for the high-rank
term with one repeated index in \eqref{one_pair}.
The mixed-rank estimate in part~\textup{(iii)}  handles
the low Hermite projection at that repeated index.

\begin{proof}[Proof of Lemma~\ref{diag_lem}]
For parts~\textup{(i)} and \textup{(ii)}, we can apply
\cite[Lemma~1]{BS13a} with scalar coordinates, common rank threshold $d$,
and all $p$ functions subject to that threshold, where $p=4$ and $p=3$,
respectively.  That result applies to nonstationary Gaussian arrays.  In
its notation, $\nu=1$, $\alpha=p$, and
\begin{align*}
 Q_A:=\max_{t\in A}\sum_{s\in A\setminus\{t\}}|r(t,s)|^d\le K_d
\end{align*}
with $K_d$ as in \eqref{small_c2}.
Its small-correlation hypothesis is $\eps<\frac{1}{p-1}$, which is
satisfied by
$\eps\in(0,\frac{1}{3})$.
As a consequence, we have
\begin{align*}
 \sum_{t_1,\ldots,t_p\in A}'
 \left|\E\prod_{i=1}^pf_i(Z_{t_i})\right|
 \le C|A|^{\frac{p}{2}}Q_A^{\frac{p}{2}}\prod_{i=1}^p\|f_i\|_2.
\end{align*}
This gives \eqref{diag4} and \eqref{diag3}.

\medskip

We next prove part~\textup{(iii)}.  We first assume that $f_1$ and
$f_2$ have finite Hermite expansions:
\begin{align*}
 f_1=\sum_{q\ge d}u_qh_q,\quad
 f_2=\sum_{\ell\ge d}v_\ell h_\ell .
\end{align*}

\noi
Recall that
\begin{align*}
 \|B\|_2=|b|,\quad
 \|f_1\|_2^2=\sum_{q\ge d}u_q^2,
 \AND
 \|f_2\|_2^2=\sum_{\ell\ge d}v_\ell^2.
\end{align*}

Let us first represent the Gaussian family as
$
 Z_j= \mathcal{X} (e_j)
 $,
 where $\mathcal{X} $ is an isonormal Gaussian process over a real separable Hilbert
 space
$\mathfrak H$ and
the 
$\{e_j\}$ are unit vectors such that
 $\langle e_i,e_j\rangle_{\mathfrak H}=r(i,j)$;
see \cite[Proposition 7.2.3]{NP12}.  Since
$
 h_q(Z_j)=\frac{1}{\sqrt{q!}}I_q(e_j^{\otimes q}),
$
we deduce from the product formula \cite[Theorem 2.7.10]{NP12} that

\noi
\begin{align*}
 h_q(Z_y)h_\ell(Z_z)
 &=
 \frac{1}{\sqrt{q!\ell!}}
 \sum_{c=0}^{q\wedge\ell}
 c!\binom qc\binom\ell c
 r(y,z)^c
 I_{q+\ell-2c}\big( e_y^{\otimes(q-c)}  \widetilde{\otimes}
 e_z^{\otimes(\ell-c)}  \big),
\end{align*}

\noi
where  $e_y^{\otimes(q-c)}  \widetilde{\otimes}  e_z^{\otimes(\ell-c)} $
is the symmetrization of $e_y^{\otimes(q-c)}  \otimes
e_z^{\otimes(\ell-c)}$.
Then, we deduce from  the orthogonality relation
\cite[Proposition 2.7.5]{NP12} that
 only the value of $c$
satisfying
$q+\ell-2c=s$
can contribute to $\E[ h_s(Z_x) h_q(Z_y)h_\ell(Z_z) ]$.
Put
 $\al=q-c$ and $\be=\ell-c.$
Then,
$q+\ell-2c=s$ is equivalent to
\begin{align}\label{deg_rel}
 \al+\be=s,\quad
 q=\al+c,\quad
 \ell=\be+c.
\end{align}
In particular, since $q,\ell\ge d$,
we have
\begin{align}\label{c_low}
 c\ge d-\min(\al,\be).
 \end{align}
 Since
$ \langle
 e_y^{\otimes\al}\widetilde{\otimes}
 e_z^{\otimes\be},
 e_x^{\otimes s}
  \rangle_{\mathfrak H^{\otimes s}}
 = \langle
 e_y^{\otimes\al} \otimes
 e_z^{\otimes\be},
 e_x^{\otimes s}
  \rangle_{\mathfrak H^{\otimes s}}
 =
 r(x,y)^\al r(x,z)^\be$ for $\al + \be =s$,
 we have
\begin{align}\label{3diag_exact}
\begin{aligned}
 \E[h_s(Z_x)h_{\al+c}(Z_y)h_{\be+c}(Z_z)]
& = \frac{c! \binom{\al+c}{c} \binom{\be+c}{c} s!}{ \sqrt{s! (\al+c)!
(\be+c)! }}
 r(x,y)^\al r(x,z)^\be r(y,z)^c
\\
&= \frac{\sqrt{s!(\al+c)!(\be+c)!}}{\al!\be!c!}\,
 r(x,y)^\al r(x,z)^\be r(y,z)^c.
\end{aligned}
\end{align}
Again by  $\al+\be=s$, the factorial coefficient in
\eqref{3diag_exact} satisfies
\begin{align*}
 \frac{\sqrt{s!(\al+c)!(\be+c)!}}{\al!\be!c!}
 =  \frac{\sqrt{s!}}{\al!\be!}
 \left(    \frac{(\al+c)!}{c!}   \frac{(\be+c)!}{c!}
 \right)^{\frac{1}{2}}
 \leq C_s(1+c)^{\frac{s}{2}}.
\end{align*}
Therefore,
\begin{align}\label{wick3}
 \big|   \E[h_s(Z_x)h_{\al+c}(Z_y)h_{\be+c}(Z_z)]  \big|
\leq   C_s(1+c)^{s/2}
 |r(x,y)|^\al
 |r(x,z)|^\be
 |r(y,z)|^c.
\end{align}

We next sum the correlation factors over the distinct indices.
Since $\al+\be=s$, at least one of $\al,\be$ is at least $s/2$.
Without loss of generality, let us assume
$
 \be\geq  s/2,
$
while the case $\al\geq  s/2$ can be dealt with in the same way.
With $\be\geq s/2$,
we have
 $\be\ge\al$, and thus
$c\ge d-\al$ by \eqref{c_low}.
 For fixed distinct $x,z$,
 it follows from
\eqref{small_c1}
that

\noi
\begin{align*}
\sum_{y\in A\setminus\{x,z\}}
 |r(x,y)|^\al |r(y,z)|^c
\leq
 \eps^{\al+c-d}
 \sum_{y\in A\setminus\{x,z\}}
 |r(x,y)|^\al |r(y,z)|^{d-\al}.
\end{align*}
If $1\le\al<d$, we can deduce from H\"older's inequality
that

\noi
\begin{align*}
 &\sum_y
 |r(x,y)|^\al |r(y,z)|^{d-\al}
\leq
 \left(\sum_y|r(x,y)|^d\right)^{\frac{\al}{d}}
 \left(\sum_y|r(y,z)|^d\right)^{\frac{d-\al}{d}}\le K_d,
\end{align*}
with $K_d$ as in \eqref{small_c2}.
The same bound is also immediate for $\al=0$.  Thus,
\begin{align}\label{conv_bd}
 \sup_{x\ne z}
 \sum_{y\in A\setminus\{x,z\}}
 |r(x,y)|^\al |r(y,z)|^c
 \le K_d\eps^{\al+c-d}.
\end{align}

We now sum the remaining factor $|r(x,z)|^\be$.  By H\"older's inequality,
we have
\begin{align}
 \sum_{x,z\in A}
 |r(x,z)|^\be \ind_{\{x\ne z\}}
& \le
 |A|^{2-2\frac{\be}{d}}
 \Bigg(
   \sum_{x,z\in A }
   |r(x,z)|^d\ind_{\{x\ne z\}}
 \Bigg)^{\frac{\be}{d}} \notag \\
& \le
 |A|^{2-2\frac{\be}{d}}(|A|K_d)^{\frac{\be}{d}}  \notag  \\
& \leq
 C m^{2-\frac{\be}{d}} \leq   C m^{2-\frac{s}{2d}}.
 \label{edge_sum}
\end{align}
Here, $\be\le s<d$ by \eqref{deg_rel},
so that the factor $K_d^{\be/d}$, bounded by $\max\{1, K_d\}$,
 can be absorbed into
the constant $C$. The last inequality in \eqref{edge_sum}
is due to   $\be\ge s/2$.

Combining \eqref{conv_bd} and \eqref{edge_sum}, and interchanging
$y,z$ when $\al>\be$, yields
\begin{align}\label{fixed_diagram_sum}
 &\sum_{x,y,z\in A}'
 |r(x,y)|^\al
 |r(x,z)|^\be
 |r(y,z)|^c
 \leq
 C\eps^{\min\{\al,\be\}+c-d}
 m^{2-\frac{s}{2d}}.
\end{align}

\smallskip

We finally sum over the Hermite orders.
In view of \eqref{wick3} and \eqref{fixed_diagram_sum},
each fixed pair
$(\al,\be)$ with $\al+\be=s$ contributes to the left side of
\eqref{diag_mix},
and its contribution is bounded by

\noi
\begin{align*}
 &C|b|m^{2-\frac{s}{2d}}
 \sum_{c\ge d-\min(\al,\be)}
 |u_{\al+c}v_{\be+c}|
 (1+c)^{\frac{s}{2}}\eps^{\min\{\al,\be\}+c-d}.
\end{align*}

\noi
Since $\eps\in(0,\frac{1}{3})$, we have

\noi
\begin{align*}
\sup_{\substack{\al,\be\ge0\\ \al+\be=s}} \sup_{c\ge d- \min\{\al,\be\}}
 (1+c)^{\frac{s}{2}}\eps^{\min\{\al,\be\}+c-d}
 <\infty.
\end{align*}
Therefore, we can apply the above uniform bound
and  Cauchy--Schwarz to get
\begin{align*}
 \sum_{c\ge d-\min\{\al,\be\}}
 |u_{\al+c}v_{\be+c}|
 (1+c)^{\frac{s}{2}}\eps^{\min\{\al,\be\}+c-d}
&\leq
 C_{d,s,\eps}
 \bigg(\sum_{q\ge d}u_q^2\bigg)^{\frac{1}{2}}
 \bigg(\sum_{\ell\ge d}v_\ell^2\bigg)^{\frac{1}{2}} \\
&=
 C_{d,s,\eps}\|f_1\|_2\|f_2\|_2.
\end{align*}
There are only finitely many pairs $(\al,\be)$ with
$\al+\be=s$.
As a result, we get

\noi
\begin{align}\label{diag_mix_finite}
 \sum_{x,y,z\in A}'
 |\E[B(Z_x)f_1(Z_y)f_2(Z_z)]|
 \le
 C|b|m^{2- \frac{s}{2d}}
 \|f_1\|_2\|f_2\|_2,
\end{align}
when $f_1, f_2$ have finite Hermite expansions, or equivalently,
when they are polynomials.
Note that the constant $C$ in \eqref{diag_mix_finite}
does not depend on the degrees of the polynomials $f_1, f_2$.

It remains to remove the finite-expansion assumption.  Let
$f_{1,N} = \sum_{q=d}^N u_q h_q$
and $f_{2,N} = \sum_{\ell=d}^N v_\ell h_\ell$.
 Then,
 $f_{i,N} \to f_i$
in  $L^2(\ga)$
and
$ \|f_{i,N}\|_2\le\|f_i\|_2$ for $i=1,2$.

For distinct $x,y,z$, let $\Sigma$ be the covariance matrix of
$(Z_x,Z_y,Z_z)$.   By \eqref{small_c1} and Gershgorin's circle theorem
(see, e.g., \cite[Theorem~1.1]{Var04}),
the spectrum of the covariance matrix $\Sigma$
is contained in $ [1-2\eps,1+2\eps]\subset(0,2)$.
In particular, $\Sigma$ is positive definite, so the law of
$(Z_x,Z_y,Z_z)$ is nondegenerate and admits a density $L$ with
respect to $\ga^{\otimes3}$.  Moreover, the operator norm $\|\Sigma\|<2$ implies that
$2\Sigma^{-1}-I$ is positive definite, and hence
$L\in L^2(\ga^{\otimes3})$.
Consequently,

\noi
\begin{align*}
\Big\vert  \E\big[
& B(Z_x)(f_{1,N}-f_1)(Z_y)f_{2,N}(Z_z)  \big] \Big\vert  \\
 &= \left\vert  \int_{\R^3 } B(w_1)(f_{1,N}-f_1)(w_2)f_{2,N}(w_3) L(w_1,
 w_2, w_3) \ga^{\otimes 3}(dw_1, dw_2, dw_3) \right\vert\\
 & \le
  \| L\|_2  |b|
 \|f_{1,N}-f_1\|_2
 \|f_{2,N}\|_2
 \xrightarrow{N\to+\infty}0,
\end{align*}
and the analogous estimate holds for the second factor.  Thus,
for every distinct triple $(x,y,z)$,
\begin{align*}
 \E[B(Z_x)f_{1,N}(Z_y)f_{2,N}(Z_z)]
 \longrightarrow
 \E[B(Z_x)f_1(Z_y)f_2(Z_z)].
\end{align*}
Since the primed sum over $A$ is finite, we may pass to the limit
in \eqref{diag_mix_finite}.  This proves \eqref{diag_mix} and
hence  completes the proof of Lemma  \ref{diag_lem}.
\qedhere

\end{proof}

\begin{remark}\rm
Parts~\textup{(i)} and \textup{(ii)} of Lemma~\ref{diag_lem}
are direct consequences of
\cite[Lemma~1]{BS13a}, since all factors have Hermite rank at least
$d$.  Part~\textup{(iii)} requires a refinement.
Indeed, applying \cite[Lemma~1]{BS13a} with $p=3$ and only
$f_1,f_2$ subject to the Hermite-rank-$d$ condition
(that is, with $\alpha=2$ in the notation of \cite{BS13a})
would yield only an $O(m^2)$ bound.  Here we exploit the
additional fact that the remaining factor $B=bh_s$ belongs to the
single chaos of order $s<d$.  In the three-row diagram expansion the
edge multiplicities satisfy $\al+\be=s$, so one of $\al,\be$ is at
least $s/2$.  Retaining the corresponding correlation power and
using
the $d$th-power covariance row bound yields the additional factor
$m^{-\frac{s}{2d}}$, and hence the sharper bound
$m^{2-\frac{s}{2d}}$.
\end{remark}

\subsection{A maximal estimate from adjacent-block bounds}
\label{max_sec}

The following lemma converts a strict power gain for two adjacent
blocks into a maximal second-moment estimate.  It is a consequence
of the classical two-increment maximal principle; see, e.g.,
\cite[Section~7.3, Lemma~7.15]{Sag20}.  We include a direct dyadic
proof adapted to the block estimates below.  We apply the lemma to
the rank-corrected truncation in the proof of
Theorem~\ref{ns_max}.  The truncation and low-Hermite-correction
arguments are inspired by \cite{BHB25}; our objective is an estimate
uniform over nonstationary covariance matrices satisfying
\eqref{ns_rows}.

\begin{lemma}[\textsf{Adjacent-block maximal lemma}]\label{max_lem}
Let  $X_1,\ldots,X_N$ be centered random variables.  For an integer
interval $I\subset\{1,\ldots,N\}$, write $S_I=\sum_{i\in I}X_i$.
Suppose that
\begin{align}
 \E[S_I^2]&\le C_0|I|,\label{abs_var}\\
 \E[S_I^2S_J^2]&\le C_1N^{1-\al}
 (|I|+|J|)^{1+\al}\label{abs_adj}
\end{align}
for all $I$ and all adjacent disjoint $I,J$ {\rm(}as illustrated in Figure \ref{fig_adj}{\rm)},
 where $\al>0$.
Then
\begin{align}\label{abs_max}
 \E\left[
  \max_{0\le m\le N}
  \left|\sum_{k=1}^mX_k\right|^2
 \right]\le C_{\al,C_0,C_1}N.
\end{align}
\end{lemma}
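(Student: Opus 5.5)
The plan is a Billingsley/M\'oricz-type chaining argument. The two-increment quantity $\min(|S_I|,|S_J|)$ is controlled through the fourth-order hypothesis \eqref{abs_adj}, and the rest through the variance hypothesis \eqref{abs_var}. Two elementary facts drive it. First, for every $1\le m\le N$,
\begin{align*}
 |S_{[1,m]}|\le |S_{[1,N]}|+\min\bigl(|S_{[1,m]}|,|S_{(m,N]}|\bigr).
\end{align*}
Indeed, if $|S_{[1,m]}|\le|S_{(m,N]}|$ the minimum is $|S_{[1,m]}|$ itself; otherwise $|S_{[1,m]}|\le|S_N|+|S_{(m,N]}|$. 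Second, $\min(a,b)^4\le a^2b^2$. Since $\E[S_N^2]\le C_0N$ by \eqref{abs_var}, it suffices to bound the second moment of the maximal ``two-sided minimum'' by $CN$.

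By adding zero variables, which preserves \eqref{abs_var}--\eqref{abs_adj} up to constants, I will assume $N=2^L$. For $0\le j\le L$, let $\mathcal D_j$ be the adjacent pairs $(I,J)$ of consecutive dyadic intervals of length $2^j$. I will then set
\begin{align*}
 \Lambda_j=\max_{(I,J)\in\mathcal D_j}\min(|S_I|,|S_J|).
\end{align*}
Using $\max\le(\sum\cdot^2)^{1/2}$ over the $N2^{-j}$ pairs, Jensen, and \eqref{abs_adj} with $|I|+|J|=2^{j+1}$,
\begin{align*}
 \E[\Lambda_j^2]\le\Bigl(\sum_{(I,J)\in\mathcal D_j}\E[S_I^2S_J^2]\Bigr)^{1/2}
 \le\bigl(C_1N2^{-j}N^{1-\al}2^{(j+1)(1+\al)}\bigr)^{1/2}
 \le C\,N^{1-\frac{\al}{2}}2^{\frac{j\al}{2}}.
\end{align*}
Hence $\|\Lambda_j\|_2\le C\sqrt N\,2^{-(L-j)\al/4}$. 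This is summable over $j\le L$ with total $C_{\al,C_0,C_1}\sqrt N$, and the strict gain $\al>0$ is used exactly here. The bottom level is handled by $\E[\max_iX_i^2]\le\sum_i\E[X_i^2]\le C_0N$, so no fourth-moment information on single summands is needed.

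The main obstacle is the deterministic chaining step. I must show that for every split point $m$,
\begin{align*}
 \min\bigl(|S_{[1,m]}|,|S_{(m,N]}|\bigr)\le C\sum_{j=0}^{L}\Lambda_j+C\max_i|X_i|
 +(\text{terms already controlled in }L^2),
\end{align*}
where the dyadic pairs realise the split only approximately. My first attempt is to argue recursively over the dyadic tree, following the proof of \cite[Lemma~7.15]{Sag20}. Let $K=I\cup J$ be the dyadic interval of length $2^{j+1}$ whose halves are separated by the top dyadic point in $(1,N]$ near $m$. Write $M_K$ for the maximal partial sum started at the left end of $K$, and $M_K'$ for the maximal partial sum ending at its right end. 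I will try to prove a recursion of the form
\begin{align*}
 M_K\le\max\bigl(M_I,\;|S_I|+M_J\bigr),
\end{align*}
and replace $|S_I|+M_J$ by $\min(|S_I|,|S_J|)$ plus $|S_K|$-type terms, via the same case split as the first elementary fact.

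The $|S_K|$-type terms at level $j$ cannot be summed naively, since that would reintroduce the logarithm. I therefore plan to absorb them into $\Lambda_{j+1}$ together with the single top-level term $|S_N|$, so that each level contributes only a $\Lambda_j$. If the recursion produces a factor larger than $1$ on the inherited maximum, I will fall back on a weaker recursion $\E[M_{2^{j+1}}^2]\le(1+\eta)\E[M_{2^j}^2]+C_\eta\E[\Lambda_j^2]+\ldots$ in the inherited order. The geometric decay of $\E[\Lambda_j^2]$ still closes, provided $\eta$ is chosen small relative to $\al$.
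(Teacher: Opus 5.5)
Your reduction $|S_{[1,m]}|\le|S_{[1,N]}|+\min(|S_{[1,m]}|,|S_{(m,N]}|)$ is correct, and so is the padding to $N=2^L$. The level estimate $\|\Lambda_j\|_2\le C\sqrt N\,2^{-(L-j)\al/4}$, which is summable precisely because $\al>0$, is also correct. These pieces match Steps~1 and~3 of the paper's proof.

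The gap is the step you yourself call the main obstacle. The pathwise bound $\min(|S_{[1,m]}|,|S_{(m,N]}|)\le C\sum_j\Lambda_j+\cdots$ is never established, and the mechanism you sketch cannot deliver it. The recursion on one-sided maxima, $M_K\le\max(M_I,|S_I|+M_J)$, pays a full block sum at every level of the tree. The case split $|S_I|\le|S_K|+\min(|S_I|,|S_J|)$ merely trades that sum for $|S_K|$.

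Such terms cannot be ``absorbed into $\Lambda_{j+1}$'', because a block sum is not dominated by minima of adjacent block sums. The only control you have on them is $\|\max_{|K|=2^{j+1}}|S_K|\|_2\le\sqrt{C_0N}$ from \eqref{abs_var}. Summed over $L\asymp\log N$ levels, this is exactly the $\sqrt N\log N$ loss you are trying to avoid. The fallback recursion $\E[M_{2^{j+1}}^2]\le(1+\eta)\E[M_{2^j}^2]+C_\eta\E[\Lambda_j^2]+\ldots$ does not help, since its unspecified remainder still contains these $|S_K|$-type terms.

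The missing idea is to recurse on the two-sided quantity itself rather than on one-sided maxima. For a dyadic interval $D=(\ell,r]$ with halves $D^-=(\ell,c]$ and $D^+=(c,r]$, set $R(D)=\max_{\ell\le m\le r}\min\{|S_{(\ell,m]}|,|S_{(m,r]}|\}$. Your first fact then says $\max_m|S_{[1,m]}|\le|S_{[1,N]}|+R((0,N])$.

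Use the deterministic inequality $\min\{|x|,|y+z|\}\le\min\{|x|,|y|\}+\min\{|x+y|,|z|\}$. Apply it with $x=S_{(\ell,m]}$, $y=S_{(m,c]}$, $z=S_{D^+}$ for $m\le c$, and symmetrically for $m>c$. This gives $R(D)\le\min\{|S_{D^-}|,|S_{D^+}|\}+\max\{R(D^-),R(D^+)\}$, with coefficient exactly one on the inherited term and no stray block sums.

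Since $R$ vanishes on singletons, iterating from the root yields $R((0,N])\le\sum_{j=0}^{L-1}\Lambda_j$ pathwise. Sibling pairs suffice, and no $\max_i|X_i|$ term is needed. Your bound on $\|\Lambda_j\|_2$ then finishes the proof. This recursion is the paper's Step~2; with it in place, your argument coincides with the paper's.
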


\begin{figure}[t]
\centering
\begin{tikzpicture}[x=.40cm,y=.40cm,>=Latex]

\definecolor{Iblue}{RGB}{10,78,150}
\definecolor{Jred}{RGB}{210,15,15}

\draw[line width=.8pt]
  (1.5,10.2) -- (24.8,10.2);

\foreach \x in {
  2.1,3.7,5.3,6.9,8.5,
  10.1,11.7,13.3,14.9,
  16.5,18.1,19.7,21.3,22.9,24.2
}
{
  \fill (\x,10.2) circle[radius=.11];
}

\fill[Iblue,opacity=.15]
  (9.3,9.65) rectangle (14.4,10.75);

\fill[Jred,opacity=.15]
  (14.55,9.65) rectangle (19.2,10.75);

\draw[
  Iblue,
  line width=.9pt,
  decorate,
  decoration={brace,amplitude=5pt}
]
(9.3,11.25) -- (14.4,11.25);

\node[Iblue] at (11.85,12.45)
{$I$};

\draw[
  Jred,
  line width=.9pt,
  decorate,
  decoration={brace,amplitude=5pt}
]
(14.55,11.25) -- (19.2,11.25);

\node[Jred] at (16.9,12.45)
{$J$};

\node[anchor=north] at (2.1,9.6) {$1$};
\node[anchor=north] at (3.7,9.6) {$2$};
\node[anchor=north] at (5.3,9.6) {$3$};
\node[anchor=north] at (6.9,9.6) {$4$};
\node[anchor=north] at (8.5,9.6) {$\cdots$};

\node[anchor=north] at (20.8,9.6) {$\cdots$};
\node[anchor=north] at (24.2,9.6) {$N$};

\draw[
  -{Latex[length=2.2mm,width=1.5mm]},
  line width=.8pt
]
(14.4,8.15) -- (14.4,9.45);

\node[anchor=north]
  at (14.4,7.95)
{\small adjacent};

\end{tikzpicture}

\caption{\small The integer intervals $I$ and $J$ are disjoint  with no
gap between them.}
\label{fig_adj}
\end{figure}

\begin{remark}\rm
\label{rem_mor}
It is useful to compare  Lemma \ref{max_lem}
with the classical
moment-to-maximum inequality of M\'oricz
\cite[Theorem~1]{Mor76}.  In a simplified form, M\'oricz's result
starts from a one-block moment estimate
$
 \E|S_I|^p\le C|I|^\beta
 $
 with 
 $\beta>1$,
and yields a maximal estimate of the same power order.
In the present setting, the natural one-block estimate is only
$
 \E[S_I^2]\le C|I|.
$
Thus the one-block second moment lies at the borderline $\beta=1$,
and the elementary dyadic argument based on this estimate gives
$\E\big[ \max_{m\le N}|S_{(0,m]}|^2 \big] \leq C N(\log(2N))^2.$
Uniform finite-variance approximation therefore requires a
sharper maximal bound.

The additional information available here has a different form.
For adjacent disjoint blocks $I,J$, we prove the bound \eqref{abs_adj},
in which the strict power gain is  sufficient
to recover the diffusive maximal bound \eqref{abs_max}
through a dyadic argument.
In this sense,  Lemma \ref{max_lem} is complementary to M\'oricz's principle:
M\'oricz exploits a superlinear moment bound for a single block,
whereas here the single-block second moment remains linear and the
strict gain appears only after coupling two adjacent blocks.
\end{remark}

\begin{proof}[Proof of Lemma \ref{max_lem}]
The proof is divided into three steps.  Throughout, $(\ell,r]$
denotes the integer interval $(\ell,r]\cap\mathbb N$.

\smallskip
\noi
{\bf Step 1: Reduction to a two-sided cut maximal function.}
Choose the smallest dyadic integer $M=2^L$ such that $M\ge N$,
so that $N\le M<2N$, and set
$X_k=0$ for $N<k\le M$.  This allows us to work on a complete
dyadic tree down to singleton intervals, while changing the relevant
scales only by an absolute constant.
For an integer interval $D=(\ell,r]\subset(0,M]$, define
\begin{align}\label{cut_def}
 R(D)
 :=
 \max_{\ell\le m\le r}
 \min\big\{
 |S_{(\ell,m]}|,
 |S_{(m,r]}|
 \big\}.
\end{align}
Thus $R(D)$ measures, over all cuts of $D$, the largest possible
value of the smaller of the two partial sums created by the cut.

Define  $P_m=\sum_{k=1}^mX_k$.
Then, we have
$S_{(0,m]}=P_m$ and
$S_{(m,M]}=P_N-P_m.$
Therefore,
$\min\{|P_m|,|P_N-P_m|\}\le R((0,M])$.
Moreover,
\begin{align}\label{prefix_bd}
 |P_m|
 \le
 |P_N|
 +
 \min\{|P_m|,|P_N-P_m|\}
 \le
 |P_N|+R((0,M]).
\end{align}

\noi
Hence,  due to Assumption \eqref{abs_var}, it remains to prove
$
 \|R((0,M])\|_2 \leq C\sqrt N.
$

\smallskip
\noi
{\bf Step 2: Dyadic recursion for the cut maximal function.}
Let $D=(\ell,r]$ be a dyadic interval,
 $a=\frac{\ell+r}{2}$,
and write $D^-=(\ell,a]$ and $D^+=(a,r]$.
Set $z_D= \min\{|S_{D^-}|,|S_{D^+}|\}.$
We claim that
\begin{align}\label{dy_rec}
 R(D)
 \le
 z_D+\max\{R(D^-),R(D^+)\}.
\end{align}

To prove this, first note the elementary inequality\footnote{To
verify \eqref{min_ineq}, if $|a|\le |b|$, then it is immediate.
  If $|b|<|a|$, then
$
 \min\{|a|,|b+c|\}-|b|
 =\min\{|a|-|b|,\ |b+c|-|b|\}
 \le\min\{|a+b|,\ |c|\},
$
which implies \eqref{min_ineq}.}
\begin{align}\label{min_ineq}
 \min\{|a|,|b+c|\}
 \le
 \min\{|a|,|b|\}
 +
 \min\{|a+b|,|c|\}.
\end{align}

Consider  $m\in D^-$.  Then,
we apply   \eqref{min_ineq}
with $a_1:=S_{(\ell,m]}$, $b_1:=S_{(m,a]}$, and
$c_1:=S_{D^+}$
to get

\noi
\begin{align*}
 \min\big\{  |S_{(\ell,m]}|,   |S_{(m,r]}|  \big\}
 =  \min\big\{  |a_1 |,   |b_1 + c_1 |  \big\}
 &\leq
 \min\{|a_1|,|b_1|\} +  \min\{|a_1+b_1|,|c_1|\} \\
 & \leq R(D^-)+z_D.
\end{align*}

\noi
For $m\in D^+$, the same argument  yields
$\min\big\{ |S_{(\ell,m]}|,   |S_{(m,r]}| \big\} \leq  z_D+R(D^+).$
Taking the maximum over all  $m\in D$ proves \eqref{dy_rec}.

At dyadic level $j$, the interval $(0,M]$ is partitioned into
$2^j$ dyadic subintervals
\begin{align*}
 D_{j,k}
 =
 \big(kM2^{-j},(k+1)M2^{-j}\big],
 \quad k=0,\ldots,2^j-1,
\end{align*}
each of length
$ \ell_j=M2^{-j}$,
  $0\le j<L$.
Define
\begin{align}\label{def_ZJ}
 Z_j :=  \max\{  z_D:  D\text{ is a dyadic interval   at level $j$} \}.
\end{align}
Iterating \eqref{dy_rec} from the root down to singleton sets gives
\begin{align}\label{cut_sum}
 R((0,M])
 \leq
 \sum_{j=0}^{L-1}Z_j.
\end{align}
Indeed, at each generation one pays at most the largest $z_D$ on that
level, while at the final level the cut maximal function of a singleton
is zero.

\smallskip
\noi
{\bf Step 3: Estimate each dyadic scale and sum over the scales.}
Fix a level $j$.  There are
$M/\ell_j$
dyadic intervals of length $\ell_j$.  For each such interval  $D$,

\noi
\begin{align}\label{ZD4}
 z_D^4
 =
 \min\{|S_{D^-}|,|S_{D^+}|\}^4
 \le
 S_{D^-}^2S_{D^+}^2.
\end{align}
After intersecting $D^-$ and $D^+$ with $\{1,\ldots,N\}$, the two
sets remain adjacent integer intervals.  If one of them is empty, then
$z_D=0$.  Otherwise, by \eqref{abs_adj} and \eqref{ZD4},
we have

\noi
\begin{align*}
 \E[z_D^4]
 \le
 C_1N^{1-\al}\ell_j^{1+\al}.
\end{align*}
By the definition \eqref{def_ZJ} of $Z_j$,
we can write, with $M<2N$,

\noi
\begin{align}
 \E[Z_j^4]
 \le
 \sum_{|D|=\ell_j}\E[z_D^4]
 \le
 \frac{M}{\ell_j}
 C_1N^{1-\al}\ell_j^{1+\al}
 =
 C_1MN^{1-\al}\ell_j^\al 
 \le
 2C_1N^{2-\al}\ell_j^\al .
 \label{Z4_bd}
\end{align}

\noi
Therefore,
 $\|Z_j\|_2 \le  \|Z_j\|_4  \le C_1'  N^{\frac{1}{2}- \frac{\al}{4}}
 \ell_j^{\al/4}.$
Combining \eqref{cut_sum} with Minkowski's inequality yields,
with $M < 2N$,
\begin{align*}
 \|R((0,M])\|_2
 \le
 \sum_{j=0}^{L-1}\|Z_j\|_2
 &\le
 C_1' N^{\frac{1}{2}- \frac{\al}{4}}
 \sum_{j=0}^{L-1}
 (M2^{-j})^{\al/4}\\
 &\leq
2^{\frac{\al}{4}} C_1'  N^{\frac{1}{2} }
 \sum_{j=0}^{\infty}
 2^{-j\al/4}.
\end{align*}

\noi
Since $\al>0$, we get
$ \|R((0,M])\|_2 \le C_{\al,C_1}\sqrt N$.\footnote{This is precisely where
the
 condition $\al>0$ is used; if
$\al=0$, the sum over the dyadic scales would be of order
$\log N$ rather than uniformly bounded.}
Finally,   from \eqref{abs_var}  and  \eqref{prefix_bd},
we get
\[
 \left\|
 \max_{1\le m\le N}|P_m|
 \right\|_2
 \le
 \|P_N\|_2+\|R((0,M])\|_2
 \le
 C_{\al,C_0,C_1}\sqrt N.
\]
Hence, the proof is completed.
\qedhere

\end{proof}

\section{The maximal inequality above the endpoint}
\label{lp_sec}

We prove Theorem~\ref{ns_max} for every real $p>2$ by a
straightforward modification of the argument in \cite{NN20},
replacing the stationary covariance sum by the row-sum bound
\eqref{ns_rows}.  We keep the proof brief and we emphasize only the
dyadic summation step, which is summable for $p>2$ but becomes
critical at $p=2$.

\subsection{Malliavin--Meyer block estimates}
\label{lp_block_sec}

We record the block estimate in the form needed below and refer to
\cite{NN20} for its derivation and further details.

Represent $Z_i=\mathcal{X} (e_i)$ in an isonormal Gaussian family over
a finite-dimensional real Hilbert space $\mathfrak H$, with
$\|e_i\|_{\mathfrak H}=1$ and
$\langle e_i,e_j\rangle_{\mathfrak H}=r_{ij}:=\E[Z_iZ_j]$.
This representation also allows singular covariance matrices;
see \cite[Proposition~7.2.3]{NP12}.
Let $D^k$ denote the $k$th Malliavin derivative and
$\delta^d$ the adjoint of $D^d$.
For $p>2$, Meyer's inequality \cite{Mey84}, in the divergence
form used in \cite[Section~2.1]{NN20}, gives
\begin{align}\label{meyer_div}
 \|\delta^d(U)\|_p
 \le C_{p,d}\sum_{k=0}^d
 \|D^kU\|_{L^p(\Omega;\mathfrak H^{\otimes(d+k)})}.
\end{align}
The constant is independent of the dimension of $\mathfrak H$.

For a polynomial $f=\sum_{q=d}^Q c_qH_q$, define its
Hermite shift by
\begin{align}\label{shift_def}
 f_d=\sum_{q=d}^Q c_qH_{q-d}.
\end{align}
The representation and Sobolev estimates in
\cite[Lemmas~2.1--2.2 and their proofs]{NN20} yield
\begin{align}
 f(Z_i)
 &=\delta^d\big(f_d(Z_i)e_i^{\otimes d}\big),
 \label{shift_identity}\\
 \sum_{k=0}^d\|f_d^{(k)}\|_p
 &\le C_{p,d}\|f\|_p,
 \label{shift_sob}
\end{align}
where $f_d^{(k)}$ is the ordinary $k$th derivative.
Crucially, the constant in \eqref{shift_sob} is independent
of the degree $Q$.
The only modification in the block argument below is to use
the row-sum bound \eqref{ns_rows} in place of the stationary
covariance sum.

\begin{proposition}[\textsf{Uniform $L^p$ block bound}]
\label{lp_block}
Under \eqref{ns_rows}, for $p>2$, every $f\in L^p(\ga)$ of
Hermite rank at least $d$, and every index set
$I\subset\{1,\ldots,n\}$,
we have 

\noi
\begin{align}\label{lp_block_bd}
 \left\|\sum_{i\in I}f(Z_i)\right\|_p
 \le C_{p,d}\sqrt{K|I|}\,\|f\|_p.
\end{align}
\end{proposition}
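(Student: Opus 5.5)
We follow the Nourdin--Nualart argument, with the stationary covariance sum replaced by the row bound \eqref{ns_rows}. First we reduce to polynomials. If $f=\sum_{q\ge d}c_qH_q\in L^p(\ga)$, we approximate $f$ in $L^p(\ga)$ by polynomials of Hermite rank at least $d$. One route is to apply the Ornstein--Uhlenbeck semigroup $P_t f$, which preserves the rank and is an $L^p$ contraction, and then truncate its rapidly decaying Hermite series; the truncation converges in $L^p$ by hypercontractivity. Since each $Z_i$ is standard normal, $\|g(Z_i)\|_p=\|g\|_p$, so the left-hand side of \eqref{lp_block_bd} is continuous in $f\in L^p(\ga)$. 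It therefore suffices to treat $f=\sum_{q=d}^Q c_qH_q$, provided every constant is independent of $Q$.

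For such $f$, the identity \eqref{shift_identity} and linearity of $\delta^d$ give
\begin{align*}
 \sum_{i\in I}f(Z_i)=\delta^d(U),\qquad U=\sum_{i\in I}f_d(Z_i)\,e_i^{\otimes d}.
\end{align*}
Meyer's inequality \eqref{meyer_div} then reduces the problem to bounding $\|D^kU\|_{L^p(\Omega;\mathfrak H^{\otimes(d+k)})}$ for $0\le k\le d$. Here
\begin{align*}
 D^kU=\sum_{i\in I}f_d^{(k)}(Z_i)\,e_i^{\otimes(d+k)},
\end{align*}
and its squared Hilbert norm is
\begin{align*}
 \|D^kU\|^2=\sum_{i,j\in I}f_d^{(k)}(Z_i)f_d^{(k)}(Z_j)\,r_{ij}^{d+k}.
\end{align*}

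We estimate this norm in $L^{p/2}(\Omega)$, which is a normed space because $p>2$. By Minkowski's inequality, Hölder's inequality, and the fact that each $Z_i$ is standard normal,
\begin{align*}
 \big\|\|D^kU\|^2\big\|_{p/2}\le\sum_{i,j\in I}|r_{ij}|^{d+k}\,\|f_d^{(k)}(Z_i)\|_p\|f_d^{(k)}(Z_j)\|_p
 =\|f_d^{(k)}\|_p^2\sum_{i,j\in I}|r_{ij}|^{d+k}.
\end{align*}
Since $|r_{ij}|\le1$, we have $|r_{ij}|^{d+k}\le|r_{ij}|^d$, and \eqref{ns_rows} gives
\begin{align*}
 \sum_{i,j\in I}|r_{ij}|^{d+k}\le\sum_{i\in I}\sum_{j=1}^n|r_{ij}|^d\le K|I|.
\end{align*}
Hence $\|D^kU\|_{L^p}\le\sqrt{K|I|}\,\|f_d^{(k)}\|_p$. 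Summing over $k$ and using \eqref{shift_sob} yields \eqref{lp_block_bd} with a constant independent of $Q$, of the dimension of $\mathfrak H$, and of the covariance matrix.

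The main obstacle is the input \eqref{shift_sob}, namely that the shifted function $f_d$ and its first $d$ derivatives are controlled in $L^p(\ga)$ uniformly in the degree. We take this from \cite{NN20}, where it follows from writing $f_d$ as a Meyer-type multiplier operator applied to $f$. The only other point needing care is the density argument, which must keep the Hermite rank at least $d$; the Ornstein--Uhlenbeck smoothing followed by truncation above does this.
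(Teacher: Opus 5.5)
Your estimate for polynomial $f$ is the paper's argument: the representation $\sum_{i\in I}f(Z_i)=\delta^d(U)$, Minkowski in $L^{p/2}(\Omega)$, H\"older, $|r_{ij}|^{d+k}\le|r_{ij}|^d$, the row bound, then \eqref{meyer_div} and \eqref{shift_sob}. Your continuity bound $\|\sum_{i\in I}(f-g)(Z_i)\|_p\le|I|\,\|f-g\|_p$ is also the paper's. The gap is in the density step, where you claim that truncating the Hermite series of $P_tf$ converges in $L^p(\ga)$ by hypercontractivity.

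Hypercontractivity gives this only when $e^{2t}\ge p-1$, since then $\|P_t(f-\Pi_{\le Q}f)\|_p\le\|f-\Pi_{\le Q}f\|_2\to0$. But you need $t\downarrow0$ to get $P_tf\to f$. For $p>2$ and $t<\frac12\log(p-1)$ the claim is not merely unproved; it fails. The chaos-wise bound $\|J_qP_tf\|_p\le e^{-qt}(p-1)^{q/2}\|J_qf\|_2$, where $J_qa=\langle a,h_q\rangle h_q$, is not summable. Moreover, the operators $\Pi_{\le Q}P_t$ are not uniformly bounded on $L^p(\ga)$. Their increment $e^{-Qt}J_Q$ has operator norm exactly $e^{-Qt}\|h_Q\|_p\|h_Q\|_{p'}$, and by the known asymptotics of Hermite $L^p(\ga)$ norms this grows like $(e^{-t}\sqrt{p-1})^Q$ up to polynomial factors. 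By Banach--Steinhaus, there exist $f\in L^p(\ga)$ with $\sup_Q\|\Pi_{\le Q}P_tf\|_p=\infty$; subtracting $\Pi_{<d}f$ makes $f$ of rank at least $d$. This is the usual failure of Hermite expansions to converge in $L^p(\ga)$ for $p\ne2$, and mild Ornstein--Uhlenbeck smoothing does not cure it.

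The fix is the paper's argument. Take polynomials $a_m\to f$ in $L^p(\ga)$. The finite-rank projection $\Pi_{<d}$ is bounded on $L^p(\ga)$, since $\|\Pi_{<d}a\|_p\le C_{p,d}\|a\|_p$ with $C_{p,d}=\sum_{q<d}\|h_q\|_p\|h_q\|_{p'}$. Hence $f_m=a_m-\Pi_{<d}a_m$ are polynomials of Hermite rank at least $d$, and $f_m\to f-\Pi_{<d}f=f$ in $L^p(\ga)$. Your continuity bound then passes the degree-uniform estimate to $f$.
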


\begin{proof}
First let $f$ be a polynomial of rank at least $d$ and set $U_I=\sum_{i\in
I}f_d(Z_i)e_i^{\otimes d}$.
By \eqref{shift_identity}, the sum in \eqref{lp_block_bd} equals
$\delta^d(U_I)$, and
\begin{align*}
 D^kU_I=\sum_{i\in I}f_d^{(k)}(Z_i)e_i^{\otimes(d+k)},
 \quad 0\le k\le d.
\end{align*}
Since $p/2\ge1$, Minkowski's and H\"older's inequalities yield
\begin{align*}
 \|D^kU_I\|_{L^p(\Omega;\mathfrak H^{\otimes(d+k)})}^2
 &\le\sum_{i,j\in I}|r_{ij}|^{d+k}
       \|f_d^{(k)}(Z_i)f_d^{(k)}(Z_j)\|_{\frac{p}{2}}\\
 &\le\|f_d^{(k)}\|_p^2
       \sum_{i,j\in I}|r_{ij}|^{d+k}
 \le K  |I|  \,\|f_d^{(k)}\|_p^2.
\end{align*}
Here all marginals are standard normal and $|r_{ij}|\le1$.
Combining this with \eqref{meyer_div} and \eqref{shift_sob} proves
\eqref{lp_block_bd} for polynomials, uniformly in their degree.

To pass to a general $f\in L^p(\ga)$, choose polynomials
$a_m\to f$ in $L^p(\ga)$; see, e.g.,
\cite[Proposition~1.1.5]{NP12}.
It is well known that the finite-rank projection $\Pi_{<d}$ is bounded on $L^p(\ga)$.
Indeed, as a consequence of H\"older's inequality, 
we have, with $p' = p/(p-1)$,

\noi
\begin{align*}
 \|\Pi_{<d}a\|_p
 &\le \sum_{q=0}^{d-1}
 |\langle a,h_q\rangle_{L^2(\ga)}|\,\|h_q\|_p
 \le \bigg( \sum_{q=0}^{d-1}  \|h_q\|_p\|h_q\|_{p'} \bigg) \|a\|_p.
\end{align*}

Since $\Pi_{<d}f=0$, we deduce from the above bound that 
$\Pi_{<d}a_m = \Pi_{<d}(a_m-f) \to 0$ in $L^p(\ga)$,
and thus
the polynomials
$f_m=a_m-\Pi_{<d}a_m$
have Hermite rank at least $d$ and satisfy
$f_m\to f$ in $L^p(\ga)$.  For every fixed finite set $I$,
$\|\sum_{i\in I}(f_m-f)(Z_i) \|_p
 \le |I|\,\|f_m-f\|_p
 \to 0.$
Passing to the limit yields the desired bound.
\qedhere

\end{proof}

\subsection{Dyadic summation for every real \texorpdfstring{$p>2$}{p>2}}
\label{lp_dyadic}

We first record the combinatorial decomposition used below
and again in Section~\ref{max_pf}.

\begin{lemma}[\textsf{Dyadic decomposition of initial segments}]
\label{com_lem}
Let $N=2^L$ for an integer $L\ge0$.
For each dyadic length $\ell\in\{1,2,4,\ldots,N\}$, define
\begin{align}\label{com_part}
 \mathcal D_\ell
 =
 \left\{
 \{j\ell+1,\ldots,(j+1)\ell\}:
 j=0,\ldots,\frac{N}{\ell}-1
 \right\}.
\end{align}
Thus $\mathcal D_\ell$ partitions $\{1,\ldots,N\}$ into
$N/\ell$ intervals of length $\ell$.

Every initial segment $\{1,\ldots,m\}$, $0\le m\le N$,
is a disjoint union of canonical dyadic intervals, with
at most one interval from each $\mathcal D_\ell$.
Consequently, for arbitrary real numbers $x_1,\ldots,x_N$,
\begin{align}\label{use2a}
 \max_{0\le m\le N}
 \left|\sum_{k=1}^m x_k\right|
 \le
 \sum_{\substack{\ell\le N\\ \ell\ {\rm dyadic}}}
 \max_{D\in\mathcal D_\ell}
 \left|\sum_{k\in D}x_k\right|.
\end{align}
\end{lemma}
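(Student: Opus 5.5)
The plan is to prove the decomposition by reading off the binary expansion of $m$, and then to deduce \eqref{use2a} from the triangle inequality.

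Fix $0\le m\le N=2^L$. For $m=0$ the segment is empty, so there is nothing to prove; at $m=N$ the whole set $\{1,\ldots,N\}$ is the single interval of $\mathcal D_N$. For $0<m<N$, write $m=\sum_{i\in E}2^i$ with $E\subset\{0,\ldots,L-1\}$, and list the exponents in decreasing order as $i_1>i_2>\cdots>i_r$. Set $m_0=0$ and $m_s=\sum_{t\le s}2^{i_t}$. Then cut $\{1,\ldots,m\}$ into the consecutive blocks $\{m_{s-1}+1,\ldots,m_s\}$ for $s=1,\ldots,r$. Each block has length $\ell=2^{i_s}$.

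The one point to check is that each block lies in $\mathcal D_\ell$, that is, its left endpoint $m_{s-1}$ is a multiple of $\ell$. This holds because $m_{s-1}$ is a sum of powers $2^{i_t}$ with $i_t>i_s$, so $2^{i_s}$ divides $m_{s-1}$. Hence the block equals $\{j\ell+1,\ldots,(j+1)\ell\}$ with $j=m_{s-1}/\ell$. We also need $j\le N/\ell-1$, which follows from $m_s\le m\le N$. The exponents $i_s$ are distinct, so at most one interval is taken from each $\mathcal D_\ell$, and the blocks are disjoint by construction.

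For \eqref{use2a}, write $\sum_{k=1}^m x_k$ as the sum over these blocks. The triangle inequality then bounds its absolute value by $\sum_{\ell}\max_{D\in\mathcal D_\ell}|\sum_{k\in D}x_k|$, where each dyadic length contributes at most one term and every term is nonnegative. This bound does not depend on $m$, so taking the maximum over $m$ gives \eqref{use2a}.

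I do not expect a real obstacle. The only step that needs care is the divisibility check above, and for that it is essential to order the binary digits from largest to smallest.
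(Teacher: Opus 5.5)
Your proposal is correct and follows essentially the same argument as the paper. Both proofs use the binary expansion of $m$ with exponents in decreasing order, check that each left endpoint $m_{s-1}$ is divisible by the current block length, note that the exponents are distinct, and conclude with the triangle inequality. Your separate treatment of $m=N$ and your explicit check that $j\le N/\ell-1$ are harmless refinements.
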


\begin{proof}
The case $m=0$ corresponds to the empty union.
For $1\le m\le N$, write its binary expansion as
$m=\sum_{\nu=1}^r2^{a_\nu}$
with $L\ge a_1>a_2>\cdots>a_r\ge0.$
Set $s_0=0$ and
$s_\nu=\sum_{\mu=1}^\nu2^{a_\mu}$ for $1\le\nu\le r$.
Since $s_r=m$, 
the intervals
 $D_\nu=\{s_{\nu-1}+1,\ldots,s_\nu\}$,
$1\le\nu\le r$,
are pairwise disjoint
 and satisfy
$
 \{1,\ldots,m\}=\bigcup_{\nu=1}^rD_\nu.
$
Moreover, $|D_\nu|=2^{a_\nu}$ and $s_{\nu-1}$ is a multiple
of $2^{a_\nu}$, because $a_\mu>a_\nu$ for every $\mu<\nu$.
Hence $D_\nu\in\mathcal D_{2^{a_\nu}}$.
The exponents are distinct, so at most one interval of each
dyadic length occurs.
See Figure~\ref{fig2} for the example $m=23$.

For each $m$, the triangle inequality therefore gives
\begin{align*}
 \left|\sum_{k=1}^m x_k\right|
 &\le \sum_{\nu=1}^r
       \left|\sum_{k\in D_\nu}x_k\right|
       \le
 \sum_{\substack{\ell\le N\\ \ell\ {\rm dyadic}}}
 \max_{D\in\mathcal D_\ell}
 \left|\sum_{k\in D}x_k\right|.
\end{align*}
Taking the maximum over $m$ proves \eqref{use2a}.
\end{proof}

\begin{figure}[t]
\centering
\begin{tikzpicture}[
    x=0.48cm,
    y=0.72cm,
    every node/.style={font=\small}
]

\foreach \k in {1,...,16}{
    \draw[fill=blue!10] (\k-1,0) rectangle (\k,1);
    \node at (\k-0.5,0.5) {\k};
}

\foreach \k in {17,...,20}{
    \draw[fill=green!10] (\k-1,0) rectangle (\k,1);
    \node at (\k-0.5,0.5) {\k};
}

\foreach \k in {21,...,22}{
    \draw[fill=orange!15] (\k-1,0) rectangle (\k,1);
    \node at (\k-0.5,0.5) {\k};
}

\draw[fill=red!10] (22,0) rectangle (23,1);
\node at (22.5,0.5) {23};

\draw[
    decorate,
    decoration={brace,amplitude=5pt},
    thick
]
(0,1.15) -- (16,1.15)
node[midway,yshift=0.5cm] {\small size $2^4$};

\draw[
    decorate,
    decoration={brace,amplitude=5pt},
    thick
]
(16,1.15) -- (20,1.15)
node[midway,yshift=0.5cm] {\small $2^2$};

\draw[
    decorate,
    decoration={brace,amplitude=5pt},
    thick
]
(20,1.15) -- (22,1.15)
node[midway,yshift=0.5cm] {\small $2^1$};

\draw[
    decorate,
    decoration={brace,amplitude=5pt},
    thick
]
(22,1.15) -- (23,1.15)
node[midway,yshift=0.5cm] {\small $2^0$};

\node at (11.5,-0.8)
{\small $\{1,\ldots,23\}=D_1\cup D_2\cup D_3\cup D_4$};

\node[
    draw,
    rounded corners=3pt,
    fill=gray!8,
    text width=7cm,
    align=center,
    inner sep=5pt
] at (11.5,-1.9)
{\small Each $D_\nu$ is a dyadic interval, and there is at most one
interval of each dyadic length.};

\end{tikzpicture}
\caption{\small Dyadic decomposition of $\{1,\ldots,m\}$
with $m=23=2^4+2^2+2^1+2^0$.}
\label{fig2}
\end{figure}

Now we are ready to present the proof of  Theorem~\ref{ns_max} 
in the high moment regime.

\begin{proof}[\textbf{Proof of Theorem~\ref{ns_max} for $p>2$}]
Let $N=2^L$ be the smallest power of two with $N\ge n$, so that
$n\le N<2n$, and define
$f(Z_k)=0$ for $n<k\le N$.
For $D\subset\{1,\ldots,N\}$, write
\[
 S_D=\sum_{k\in D}f(Z_k).
\]
By Lemma~\ref{com_lem}, every initial segment
$\{1,\ldots,m\}$ is the disjoint union of at most one interval
from each dyadic partition.  Hence
\begin{align*}
 \max_{0\le m\le n}
 \left|\sum_{k=1}^m f(Z_k)\right|
 \le
 \sum_{j=0}^L
 \max_{D\in\mathcal D_{N2^{-j}}}|S_D|.
\end{align*}

For each $0\le j\le L$, the partition
$\mathcal D_{N2^{-j}}$ contains $2^j$ intervals, each of
length $N2^{-j}$. 
Then,  Proposition~\ref{lp_block}, applied to
$D\cap\{1,\ldots,n\}$, gives

\noi
\begin{align*}
 \left\|  \max_{D\in\mathcal D_{N2^{-j}}}|S_D|  \right\|_p
 \le
 \left(  \sum_{D\in\mathcal D_{N2^{-j}}}   \|S_D\|_p^p  \right)^{1/p}
 &\le
 \left[  2^j  \left( C_{p,d}\sqrt{K N2^{-j}}\,\|f\|_p  \right)^p \right]^{1/p}\\
 &= C_{p,d}\sqrt{KN}\,\|f\|_p\, 2^{-j(\frac12-\frac1p)}.
\end{align*}

\noi
For $p>2$, the geometric series satisfies
\begin{align}\label{lp_geometric}
 \sum_{j=0}^L
 2^{-j(\frac12-\frac1p)}
 &\le
 \sum_{j=0}^\infty
 2^{-j(\frac12-\frac1p)}
 =
  \big( 1-2^{\frac1p-\frac12}  \big)^{-1}.
\end{align}

\noi
Therefore, by Minkowski's inequality and $N<2n$,
\begin{align}\label{lp_max_proof}
 \left\|
 \max_{0\le m\le n}
 \left|\sum_{k=1}^m f(Z_k)\right|
 \right\|_p
 &\le
 C_{p,d}\sqrt{KN}\,\|f\|_p
 \sum_{j=0}^L
 2^{-j(\frac12-\frac1p)}\le
 \frac{C_{p,d}\sqrt{Kn}}
 {1-2^{\frac1p-\frac12}}\,
 \|f\|_p.
\end{align}
This proves \eqref{ns_bdd} for $p>2$.
\qedhere

\end{proof}

\section{The critical \texorpdfstring{$L^2$}{L2} maximal inequality}
\label{ns_sec}

We follow Steps~\eqref{step_trunc}--\eqref{step_partition}
from the Introduction.  Rank-preserving truncation and the
low--high chaos analysis give the adjacent-block bound needed
for Lemma~\ref{max_lem}.  We then restore the discarded terms
in $L^2$ and remove the separation assumption by a finite
partition.

\subsection{Rank-preserving truncation}
\label{clip}

We carry out Step~\eqref{step_trunc}.  We first truncate $\varphi$ and then
subtract the low Hermite projections created by this operation.
The resulting function has rank at least $d$, but need not be
bounded.

More precisely, throughout Sections~\ref{clip}--\ref{adj} let
$\varphi\in L^2(\ga)$ have vanishing Hermite coefficients below
order $d$.  Define
\begin{align}\label{trunc_def}
 T_n=\varphi\ind_{\{|\varphi|\le\sqrt n\}},
 \quad \be_{q,n}=\langle T_n,h_q\rangle_{L^2(\ga)},
 \quad 0\le q<d,
\end{align}
with $h_q$ as in \eqref{def_hq},
and let $\Pi_{<d}$ be the orthogonal projection onto
$\operatorname{span}\{h_0,\ldots,h_{d-1}\}$.  Set
\begin{align}\label{g_def}
 g_n=T_n -\Pi_{<d}T_n
 =T_n-\sum_{q=0}^{d-1}\be_{q,n}h_q.
\end{align}

\begin{proposition}[\textsf{Rank-preserving truncation}]\label{clip_prop}
The functions $g_n$ have vanishing Hermite coefficients below
order $d$ and satisfy
\begin{align}\label{g_mom}
 \|g_n\|_2\le\|\varphi\|_2,
 \quad\|g_n-\varphi\|_2\longrightarrow0,
 \quad\E[g_n(\cZ)^4]\le Cn.
\end{align}
Moreover,
\begin{align}\label{clip_coef}
 |\be_{0,n}|\le n^{-\frac{1}{2}}\|\varphi\|_2^2,
 \quad
 |\be_{q,n}|\le C_qn^{-1/2}(1+\log n)^{q/2},
\quad 1\le q<d.
\end{align}
The constants are uniform over $\|\varphi\|_2\le M$ for each
fixed $M<\infty$.
\end{proposition}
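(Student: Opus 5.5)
The argument is a sequence of elementary $L^2(\ga)$ computations, using only orthogonality and Lemma~\ref{rearr_lem}\textup{(i)}. Write $A_n=\{|\varphi|>\sqrt n\}$ and $p_n=\ga(A_n)$. By Chebyshev's inequality $p_n\le n^{-1}\|\varphi\|_2^2$, and we may assume $p_n\le\frac12$; when $n<2\|\varphi\|_2^2$ all claims are trivial with constants depending on $M$.

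\emph{Rank and the $L^2$ bounds.} By construction $g_n=(I-\Pi_{<d})T_n$, so $g_n$ has no Hermite coefficients below order $d$. Since $I-\Pi_{<d}$ is an orthogonal projection, $\|g_n\|_2\le\|T_n\|_2\le\|\varphi\|_2$. The hypothesis $\Pi_{<d}\varphi=0$ gives $\varphi=(I-\Pi_{<d})\varphi$, hence
\[
g_n-\varphi=-(I-\Pi_{<d})(\varphi\ind_{A_n}),
\qquad
\|g_n-\varphi\|_2\le\|\varphi\ind_{A_n}\|_2\to0 .
\]
The convergence follows by dominated convergence, since $A_n\downarrow\{|\varphi|=\infty\}$, a $\ga$-null set.

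\emph{Coefficients.} Because $\langle\varphi,h_q\rangle=0$ for $q<d$, we have $\be_{q,n}=-\langle\varphi\ind_{A_n},h_q\rangle$. Cauchy--Schwarz and Lemma~\ref{rearr_lem}\textup{(i)} give
\[
|\be_{q,n}|\le\|\varphi\|_2\Bigl(\int_{A_n}h_q^2\,d\ga\Bigr)^{1/2}
\le C_q\|\varphi\|_2\,p_n^{1/2}\bigl(1+\log(1/p_n)\bigr)^{q/2}.
\]
The function $p\mapsto p(1+\log(1/p))^q$ is increasing on $(0,e^{-q+1}]$ (and bounded on $[e^{-q+1},\tfrac12]$). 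Hence the bound $p_n\le\|\varphi\|_2^2/n$ yields $|\be_{q,n}|\le C_{q,M}n^{-1/2}(1+\log n)^{q/2}$ for $q\ge1$. For $q=0$, use instead $h_0=1$ and $|\varphi|\le|\varphi|^2/\sqrt n$ on $A_n$, which gives $|\be_{0,n}|\le\int_{A_n}|\varphi|\,d\ga\le n^{-1/2}\|\varphi\|_2^2$ exactly as stated.

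\emph{Fourth moment.} Use $|T_n|\le\sqrt n$ to get $\E[T_n(\cZ)^4]\le n\,\E[T_n(\cZ)^2]\le n\|\varphi\|_2^2$. For the correction term,
\[
\|\Pi_{<d}T_n\|_4\le\sum_{q<d}|\be_{q,n}|\,\|h_q\|_4=O\bigl(n^{-1/2}(\log n)^{(d-1)/2}\bigr),
\]
which is bounded. Minkowski's inequality in $L^4$ then gives $\E[g_n(\cZ)^4]\le Cn$.

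All constants depend only on $d$ and on an upper bound $M$ for $\|\varphi\|_2$. The only slightly delicate point is the monotonicity step needed to pass from $p_n$ to $n^{-1}$ inside the logarithmic factor, together with the small-$n$ regime where $p_n>\frac12$; there the trivial bounds $|\be_{q,n}|\le\|\varphi\|_2$ suffice after enlarging $C_q$.
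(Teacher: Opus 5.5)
Your proof is correct and follows the paper's argument essentially step for step. The shared steps are: Chebyshev for $\ga(\{|\varphi|>\sqrt n\})$; the identity $\be_{q,n}=-\langle\varphi\ind_{E_n},h_q\rangle$; the pointwise bound $|\varphi|\le n^{-1/2}\varphi^2$ on $E_n$ for $q=0$; Cauchy--Schwarz with Lemma~\ref{rearr_lem}(i) and monotonicity of $p(1+\log(1/p))^q$ on $(0,e^{1-q}]$ for $1\le q<d$, with finitely many $n$ absorbed into the constant; the contraction property of the projection for the $L^2$ bounds; and $|T_n|\le\sqrt n$ plus a bounded $L^4$ correction for the fourth moment. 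The differences are cosmetic: you bound $\|\Pi_{<d}T_n\|_4$ using the refined coefficient estimates, whereas the paper uses only $|\be_{q,n}|\le\|\varphi\|_2$. Also, the paper compares $p_n$ with $M^2/n$ rather than $\|\varphi\|_2^2/n$, which gives the uniformity over $\|\varphi\|_2\le M$ without any separate check for small $\|\varphi\|_2$.
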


 \begin{proof}
Put  $E_n=\{|\varphi|>\sqrt n\}$ and
$ q_n=\E[\varphi(\cZ)^2\ind_{E_n}(\cZ)].$
Then, $q_n\to0$ by dominated convergence, and
\begin{align}\label{En_prob}
 \ga(E_n)\le \frac{q_n}{n}\le\frac{\|\varphi\|_2^2}{n}.
\end{align}
Since the Hermite coefficients of $\varphi$ below order $d$ vanish,
\begin{align}\label{beta_tail}
 \be_{q,n}
 =-\E[\varphi(\cZ)h_q(\cZ)\ind_{E_n}(\cZ)],
 \quad 0\le q<d.
\end{align}

For $q=0$, on $E_n$ we have
$|\varphi|\le n^{-\frac{1}{2}}\varphi^2$, and thus
\begin{align}\label{beta0}
 |\be_{0,n}|
 \le q_n n^{-\frac{1}{2}}
 \le n^{-\frac{1}{2}}\|\varphi\|_2^2.
\end{align}
For $1\le q<d$, Cauchy--Schwarz gives
\begin{align}\label{beta_cs}
 |\be_{q,n}|
 \le q_n^{\frac{1}{2}}
 \left(\int_{E_n}h_q^2\,d\ga\right)^{\frac{1}{2}}.
\end{align}
Fix $M<\infty$ with $\|\varphi\|_2\le M$.  For sufficiently
large $n$, we have $\frac{M^2}{n}\le\min\{\frac{1}{2},e^{1-q}\}$,
while 
the function $p\mapsto p(1+\log(\frac{1}{p}))^q$ is increasing on 
$(0, e^{1-q} ]$. 
Applying Lemma~\ref{rearr_lem}\textup{(i)} and
\eqref{En_prob} therefore yields
\begin{align*}
 \int_{E_n}h_q^2\,d\ga
 \le C_q n^{-1}(1+\log n)^q.
\end{align*}
Since $q_n^{1/2}\le\|\varphi\|_2$, \eqref{beta_cs} implies
\begin{align}\label{beta_bd}
 |\be_{q,n}|
 \le C_q n^{- 1/2}(1+\log n)^{q/2},
 \quad 1\le q<d.
\end{align}
The finitely many remaining values of $n$ are absorbed into the
constant.  When $\ga(E_n)=0$, \eqref{beta_tail} gives
$\be_{q,n}=0$.  The constants above can be chosen uniformly when
$\|\varphi\|_2$ ranges over a fixed bounded set.
This proves \eqref{clip_coef}.

Recall from \eqref{g_def} that
$g_n= T_n -\Pi_{<d}T_n$.  Since
$\Pi_{<d}\varphi=0$, the contraction property of the
orthogonal projection gives
\begin{align}\label{g_l2err}
 \begin{aligned}
 \|g_n\|_2
 &\le \|T_n\|_2
 \le \|\varphi\|_2,\\
 \|g_n-\varphi\|_2^2
 &\le \|T_n-\varphi\|_2^2
 =q_n\longrightarrow0.
 \end{aligned}
\end{align}

Finally, let
\begin{align}\label{def_PN}
 P_n=\Pi_{<d}T_n=\sum_{q=0}^{d-1}\be_{q,n}h_q.
\end{align}
Since $|T_n|\le\sqrt n$,
$
 \E[T_n(\cZ)^4]
 \le n\E[T_n(\cZ)^2]
 \le n\|\varphi\|_2^2.
$
Moreover, from
$
 |\be_{q,n}|
 \le \|T_n\|_2
 \le \|\varphi\|_2,
$
we can easily deduce that
\begin{align}\label{bdd_PN}
 \sup_{n\ge1}\E[P_n(\cZ)^4]<\infty,
 \end{align}
and consequently,
 $\E[g_n(\cZ)^4]\le Cn.$
This proves \eqref{g_mom} and thus concludes the  proof.
\qedhere

\end{proof}

Recall that Step \eqref{step_diagrams},  the separated diagram estimates, was established in Lemma \ref{diag_lem}. We now turn to Step \eqref{step_square}.

\subsection{The square of the rank-corrected truncation}
\label{square}

In Step~\eqref{step_square}, the adjacent-block estimate requires
products of two squared block sums.  We therefore single out the
low Hermite chaoses of $g_n^2$.
Let  $\cZ\sim\cN(0,1)$ and
\begin{align}\label{sq_dec}
  g_n^2- \E[g_n(\cZ)^2] =\sum_{q=1}^{d-1}B_{q,n}+\mathcal H_n,
\end{align}
where $B_{q,n} =  \E[ g_n^2(\cZ) h_q(\cZ)] h_q$ is the $q$th Hermite-chaos
projection for $q\ge1$, and
$\mathcal H_n = \sum_{q\geq d} B_{q,n}$
has Hermite rank at least $d$.
 When $d=1$, the sum in
\eqref{sq_dec} is empty.

\begin{lemma}
\label{sq_lem}
Fix $M<\infty$ and assume $\|\varphi\|_2\le M$.
Recall $B_{q,n}$ and $\mathcal H_n$ from \eqref{sq_dec}.
  For $1\le q<d$, we have
\begin{align}\label{sq_low}
  \norm{B_{q,n}}_2 = \big|  \E[ g_n^2(\cZ) h_q(\cZ)] \big|
  \le C_q(1+\log n)^{q/2}.
\end{align}
Moreover,
\begin{align}\label{high_l2}
  \norm{\mathcal H_n}_2^2\le Cn.
\end{align}

\noi
Fix $0<\eps<1$ and put  $\dr=\frac{2\eps}{1+\eps}<1$.
Then,
there exists $C=C_{d,M,\eps}<\infty$ such that
for $|r|\le\eps$,

\noi
\begin{align}\label{sq_high}
  \sum_{q\ge d} \| B_{q,n}\|_2^2|r|^q
  \le Cn^\dr|r|^d.
\end{align}
The above constants are uniform over $\|\varphi\|_2\le M$ for each
fixed $M<\infty$.
\end{lemma}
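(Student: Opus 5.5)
We treat the three estimates separately, working throughout with the decomposition $g_n=T_n-P_n$ with $P_n=\Pi_{<d}T_n$ from \eqref{def_PN}.

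\textbf{Low chaoses, \eqref{sq_low}.} Expand
\[
g_n^2=T_n^2-2T_nP_n+P_n^2 .
\]
The contributions of $T_nP_n$ and $P_n^2$ to $\E[g_n^2(\cZ)h_q(\cZ)]$ are bounded uniformly in $n$. Indeed, $P_n$ is a polynomial of degree $<d$ whose coefficients are bounded by $\|\varphi\|_2$ via \eqref{clip_coef}. Cauchy--Schwarz and hypercontractivity of $h_q P_n$ then bound both terms by $C\|T_n\|_2\le CM$ and by a constant, respectively. For the main term $\E[T_n^2h_q]$ we apply Lemma~\ref{rearr_lem}(ii) with $U=T_n(\cZ)^2$. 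This is legitimate because $U\le n$ and $\E U\le M^2$. With the choice $M\to\max\{n,2\}$ it gives $\E[U|h_q(\cZ)|]\le C_q(1+\log n)^{q/2}$. The norm identity $\|B_{q,n}\|_2=|\E[g_n^2h_q]|$ is immediate from orthonormality.

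\textbf{High chaos, \eqref{high_l2}.} By orthogonality,
\[
\|\mathcal H_n\|_2^2\le \Var(g_n^2(\cZ))\le \E[g_n(\cZ)^4]\le Cn,
\]
where the last inequality is Proposition~\ref{clip_prop}.

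\textbf{Weighted sum, \eqref{sq_high}.} This is the main obstacle, since the weight $|r|^q$ has to produce the sublinear power $n^\dr$. The plan is to read the left-hand side as a value of the Ornstein--Uhlenbeck semigroup. With $t$ chosen so that $e^{-t}=|r|$, we have
\[
\sum_{q\ge d}\|B_{q,n}\|_2^2|r|^q=\langle \mathcal H_n,P_t\mathcal H_n\rangle=\|P_{t/2}\mathcal H_n\|_2^2 .
\]
Factor $|r|^q=|r|^d\,|r|^{q-d}$ and use $|r|\le\eps$. It then suffices to bound
\[
\sum_{q\ge d}\|B_{q,n}\|_2^2\,\eps^{q-d}\le \eps^{-d}\sum_{q}\|B_{q,n}\|_2^2\,\eps^{q}=\eps^{-d}\|P_{s}\mathcal H_n\|_2^2,
\qquad e^{-2s}=\eps .
\]
Nelson's hypercontractivity gives $\|P_s F\|_2\le\|F\|_{p}$ whenever $p=1+e^{-2s}=1+\eps$. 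We apply this to $F=g_n^2-\E g_n^2$, noting that $P_s$ contracts the projection onto chaoses $\ge d$ in $L^2$. This yields
\[
\|P_s\mathcal H_n\|_2^2\le \|g_n^2-\E g_n^2\|_{1+\eps}^2\le C\|g_n\|_{2(1+\eps)}^4 .
\]
Next we interpolate between $\|g_n\|_2\le M$ and $\|g_n\|_\infty\lesssim\sqrt n$ (up to the bounded polynomial $P_n$, which is handled separately). For $T_n$ this gives
\[
\E|T_n|^{2+2\eps}\le n^{\eps}M^2,
\qquad\text{so}\qquad
\|T_n^2\|_{1+\eps}^2\le C\,n^{2\eps/(1+\eps)}=Cn^\dr .
\]
The cross and polynomial terms contribute only $O(n^{\dr/2})+O(1)$. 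Multiplying back by $|r|^d$ and absorbing $\eps^{-d}$ into $C_{d,M,\eps}$ gives \eqref{sq_high}. All constants depend on $\varphi$ only through $M$.
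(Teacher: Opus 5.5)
Your proposal is correct and follows essentially the paper's route. For \eqref{sq_low} you use Lemma~\ref{rearr_lem}(ii) with $U=T_n(\cZ)^2$ and bound the $P_n$-terms uniformly; for \eqref{high_l2} you use orthogonality together with $\E[g_n(\cZ)^4]\le Cn$; and for \eqref{sq_high} you use Nelson hypercontractivity from $L^{1+\eps}$ followed by interpolation. The differences are cosmetic. Your bound $\|P_sF\|_2\le\|F\|_{1+\eps}$ with $e^{-2s}=\eps$ is equivalent to the paper's $L^{1+\eps}\to L^{(1+\eps)/\eps}$ contraction of $Q_\eps$ combined with H\"older. You interpolate using $|T_n|\le\sqrt n$ rather than $\|g_n\|_4^4\le Cn$, and both give the factor $n^{\dr}$ with $\dr=\frac{2\eps}{1+\eps}$.
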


\begin{proof}
Lemma~\ref{rearr_lem}\textup{(ii)}, applied to
$U=T_n(\cZ)^2=\varphi(\cZ)^2
\ind_{\{|\varphi(\cZ)|\le\sqrt n\}}$ with upper bound $\max\{n,2\}$, gives
\begin{align}\label{trunc_sq}
  \E[T_n(\cZ)^2|h_q(\cZ)|]
  \le C_q(1+\log n)^{\frac{q}{2}}.
\end{align}
Write $P_n=\sum_{j<d}\be_{j,n}h_j$ as in \eqref{def_PN}.
Then,    with $g_n = T_n - P_n$, we first write

\noi
\begin{align*}
\|B_{q,n}\|_2
&\le
\big|\E[T_n^2(\cZ)h_q(\cZ)]\big|
+\big|\E[P_n^2(\cZ)h_q(\cZ)]\big|
+2\big|\E[T_n(\cZ)P_n(\cZ)h_q(\cZ)]\big|,
\end{align*}
and thus deduce \eqref{sq_low}
from
\eqref{trunc_sq}, \eqref{bdd_PN},
and
\[
 \big|\E[T_n(\cZ)P_n(\cZ)h_q(\cZ)]\big|
 \leq \|T_n\|_2  \| P_n\|_4 \| h_q\|_4 \leq C_q.
\]
To prove \eqref{high_l2}, it suffices
to note that
$  \norm{\mathcal H_n}_2^2
  \le \Var(g_n(\cZ)^2)
  \le \E[g_n(\cZ)^4]
  \le Cn,$
by orthogonality \eqref{OL_pq} and  \eqref{g_mom}.

 \medskip

Finally, we prove \eqref{sq_high} by hypercontractivity of the
Ornstein--Uhlenbeck semigroup.  For $\rho\in[0,1)$ and
$F\in L^1(\ga)$, let
\begin{align}\label{OU_def}
 Q_\rho F(x)
 :=
 \int_{\R}
 F\bigl(\rho x+\sqrt{1-\rho^2}\,z\bigr)\,\ga(dz).
\end{align}
For $0<\rho<1$, $Q_\rho=P_t$ in the usual
Ornstein--Uhlenbeck notation when $\rho=e^{-t}$; see, e.g.,
\cite[Definition~1.3.1]{NP12}.  In particular,
\begin{align}\label{OU_chaos}
 Q_\rho h_q=\rho^q h_q,
 \quad q\ge0.
\end{align}
Equivalently, if $(X,Y_\rho)$ is a centered Gaussian vector with
standard normal marginals and correlation $\rho$, then
\begin{align}\label{OU_corr}
 \E[F(X)G(Y_\rho)]
 =
 \langle F,Q_\rho G\rangle_{L^2(\ga)}
\end{align}
for $F,G\in L^2(\ga)$.

Fix $\eps\in(0,1)$ as in the statement.  Nelson's
hypercontractivity inequality
\cite[Theorem~2.8.12]{NP12} implies that
\begin{align}\label{OU_hyper}
 Q_\eps:
 L^{1+\eps}(\ga)
 \longrightarrow
 L^{\frac{1+\eps}{\eps}}(\ga)
 \quad\text{is a contraction.}
\end{align}

Recall that
$ \dr=\frac{2\eps}{1+\eps}$,
so that
$ \frac{1}{2+2\eps}
 =
 \frac{1-\dr}{2}+\frac{\dr}{4}.$
 Then, we deduce from  \eqref{g_mom} and interpolation
 that

\noi
\begin{align}\label{interp}
 \|g_n\|_{2+2\eps}^4
 &\le
 \|g_n\|_2^{4(1-\dr)}
 \|g_n\|_4^{4\dr}
 \le C n^\dr.
\end{align}

We next apply \eqref{OU_hyper} to $g_n^2$.  By H\"older's inequality,
\eqref{OU_corr}, and hypercontractivity \eqref{OU_hyper},
\begin{align}
 \E[g_n(X)^2g_n(Y_\eps)^2]
 =
 \langle g_n^2,Q_\eps(g_n^2)\rangle_{L^2(\ga)}
 &\le
 \|g_n^2\|_{1+\eps}
 \|Q_\eps(g_n^2)\|_{\frac{1+\eps}{\eps}}
 \notag\\
 &\le
 \|g_n^2\|_{1+\eps}^2
 =
 \|g_n\|_{2+2\eps}^4
 \le Cn^\dr.
 \label{hyper_sq}
\end{align}

Recall $B_{q,n}$ is the $q$th Hermite-chaos projection of
$g_n^2$.  By \eqref{OU_chaos}, \eqref{hyper_sq}, and orthogonality
relation \eqref{OL_pq},
we can write, with $v_n = \E[ g_n^2(\cZ)]$,
\begin{align}\label{OU_series}
 \sum_{q\ge1}\eps^q\|B_{q,n}\|_2^2
 &=
 \big\langle g_n^2-v_n,
 Q_\eps(g_n^2-v_n)\big\rangle_{L^2(\ga)}
 \notag\\
 &\le
 \E[g_n(X)^2g_n(Y_\eps)^2]
 \le Cn^\dr.
\end{align}
Consequently, for $|r|\le\eps$,
\begin{align*}
 \sum_{q\ge d}\|B_{q,n}\|_2^2|r|^q
 &=
 \sum_{q\ge d}
 \|B_{q,n}\|_2^2\eps^q
 \left(\frac{|r|}{\eps}\right)^q
 \le
 \left(\frac{|r|}{\eps}\right)^d
 \sum_{q\ge d}\|B_{q,n}\|_2^2\eps^q
 \le
 C n^\dr |r|^d,
\end{align*}
where the factor $\eps^{-d}$ is absorbed into $C$.  This proves
\eqref{sq_high} and completes the proof.
\qedhere

\end{proof}

\subsection{An adjacent-block fourth-moment estimate}
\label{adj}

We now establish the mixed-moment gain in
Step~\eqref{step_adjacent}.

Let $Z_1,\ldots,Z_N$ be centered jointly Gaussian variables with
standard normal marginals.
Put $r(i,j)=\E[Z_iZ_j]$ and assume the conditions
\eqref{small_c1}, \eqref{small_c2}
with  $\mathcal I=\{1,\ldots,N\}$
and  $K_d\le K$.
Let $n$ be an integer in
 $[N, A_0N]$ for some fixed  $A_0\ge1$,
let $\varphi\in L^2(\ga)$ have Hermite rank at least $d$, and
retain
$g_n$ as in  \eqref{trunc_def}--\eqref{g_def}.

 For an integer interval
$I\subset\{1,\ldots,N\}$, put
\begin{align}\label{blk_sum}
  S_I=\sum_{i\in I}g_n(Z_i).
\end{align}

\begin{proposition}[\textsf{Adjacent-block estimate}]\label{adj_prop}
Under the preceding assumptions, suppose $\|\varphi\|_2\le M$.
Then the following statements hold.

\smallskip

{\rm (i)} $\E[S_I^2]\le KM^2|I|$ for every interval $I$.

\smallskip

{\rm (ii)}
There exist $\al \in(0, \frac{1}{4d}]$ and $C<\infty$, depending only on
$(d,\eps,K,A_0,  M)$
such that
\begin{align}\label{adj_bd}
 \E[S_I^2S_J^2]\le CN^{1-\al} ( |I| + |J| )^{1+\al}
\end{align}
for any adjacent disjoint integer intervals
$I,J\subset\{1,\ldots,N\}$.

\end{proposition}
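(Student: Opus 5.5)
Part (i) follows from the orthogonality relation \eqref{OL_pq}. Writing $g_n=\sum_{q\ge d}a_qh_q$, we get $\E[S_I^2]=\sum_{i,j\in I}\sum_{q\ge d}a_q^2 r(i,j)^q\le\|g_n\|_2^2\sum_{i,j\in I}|r(i,j)|^d\le K|I|M^2$, using $|r|\le1$, \eqref{small_c2} and $\|g_n\|_2\le\|\varphi\|_2\le M$ from \eqref{g_mom}.

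For part (ii), put $m=|I|+|J|$ and $A=I\cup J$. Since $I\cup J$ is itself an interval, it suffices to bound the full expansion
\[
\E[S_I^2S_J^2]=\sum_{x,x'\in I}\sum_{y,y'\in J}\E[g_n(Z_x)g_n(Z_{x'})g_n(Z_y)g_n(Z_{y'})].
\]
Because $I\cap J=\emptyset$, coincidences can occur only as $x=x'$ and/or $y=y'$, so we split into three cases.

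\emph{Four distinct indices.} Lemma~\ref{diag_lem}(i) bounds this contribution by $Cm^2M^4$.

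\emph{Both pairs repeated.} Here the term is $\sum_{x\in I,y\in J}\E[g_n^2(Z_x)g_n^2(Z_y)]$. Using \eqref{sq_dec}, write $g_n^2=v_n+\sum_{q=1}^{d-1}B_{q,n}+\mathcal H_n$. The constant part contributes $v_n^2|I||J|\le M^4m^2$. The low chaos parts contribute at most $\sum_{q<d}\|B_{q,n}\|_2^2\sum_{x,y}|r(x,y)|^q$. By H\"older as in \eqref{edge_sum}, $\sum_{x,y}|r|^q\le Cm^{2-q/d}$, so with \eqref{sq_low} this is $\le C(\log n)^{q}m^{2-q/d}$. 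The high part contributes at most $\sum_{q\ge d}\|B_{q,n}\|_2^2|r|^q\le Cn^{\theta}|r(x,y)|^d$ by \eqref{sq_high}, using $|r|\le\eps$ off the diagonal, and summing gives $Cn^\theta Km$. Note $n\le A_0N$.

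\emph{Exactly one pair repeated.} Say $x=x'$ and $y\ne y'$. Then the term is $\sum' \E[g_n^2(Z_x)g_n(Z_y)g_n(Z_{y'})]$ over distinct triples. The constant $v_n$ gives $v_n\sum_{y\ne y'}\E[g_n(Z_y)g_n(Z_{y'})]\le M^4Km$. Each $B_{q,n}$ gives, by Lemma~\ref{diag_lem}(iii), $C(\log n)^{q/2}m^{2-q/(2d)}$. The part $\mathcal H_n$ gives, by Lemma~\ref{diag_lem}(ii) and \eqref{high_l2}, $C\sqrt n\,m^{3/2}$.

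Collecting terms, the bound has the form $C[m^2+m\,n^\theta+\sqrt n\,m^{3/2}+(\log n)^{d}m^{2-1/(2d)}]$. Each term is to be compared with $N^{1-\al}m^{1+\al}$ for $m\le N$, $n\asymp N$. The terms $m^2$ and $\sqrt N m^{3/2}$ are fine for $\al\le1/2$, and $mN^\theta\le N^{1-\al}m^{1+\al}$ holds provided $\theta\le1-\al$; fixing $\eps<1/3$ gives $\theta=2\eps/(1+\eps)<1/2$, so this is fine.

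\emph{Main obstacle: the logarithmic low-chaos term.} We need $(\log N)^d m^{2-1/(2d)}\le CN^{1-\al}m^{1+\al}$, i.e.\ $(\log N)^d\le C(N/m)^{1-\al}m^{1/(2d)-\al}$. Taking $\al=1/(4d)$, the right side is at least $m^{1/(4d)}(N/m)^{1-\al}\ge N^{1/(4d)}\cdot(N/m)^{1-2\al}$, and this dominates $(\log N)^d$ up to a constant, since $m\ge1$ and $1-\al\ge1/(4d)$. This balancing is where the exponent $\al=1/(4d)$ comes from. Small $N$ is absorbed into $C$, and the constants depend only on $(d,\eps,K,A_0,M)$.
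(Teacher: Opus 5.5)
Your proof is correct and follows the paper's argument essentially step for step: the same three-case split of the expansion, the same decomposition of $g_n^2$ into constant, low-chaos and high-chaos parts handled by Lemma~\ref{diag_lem}(i)--(iii), \eqref{sq_low}, \eqref{high_l2}, \eqref{sq_high}, and the same final comparison with $\al=\frac{1}{4d}$. There are two harmless slips, and neither affects the conclusion. First, in the one-repeated-pair case the constant term carries an extra factor $|I|$ from the sum over $x$, so it is $O(m^2)$ rather than $O(m)$; this is already covered by your $m^2$ term. Second, the correctly rearranged right-hand side in the logarithmic comparison is $(N/m)^{1-\al}m^{\frac{1}{2d}}$, which is larger than the expression you wrote, so your bound still holds.
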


\begin{proof}
Using the Hermite-rank condition and the orthogonality relation
\eqref{OL_pq},
we obtain
\begin{align}\label{use1}
 \big|\E[g_n(Z_i)g_n(Z_j)]\big|
 \le \|g_n\|_2^2 |r(i,j)|^d
 \le M^2 |r(i,j)|^d.
\end{align}
Consequently, by \eqref{small_c2},
\begin{align*}
 \E[S_I^2]
 =
 \sum_{i,j\in I}\E[g_n(Z_i)g_n(Z_j)]
  \le
 M^2\sum_{i,j\in I}|r(i,j)|^d
 \le K M^2 |I|,
\end{align*}
which proves part~\textup{(i)}.

We now prove \eqref{adj_bd}.  Put $m:=|I|+|J|$.
Expanding $\E[S_I^2S_J^2]$ gives a sum over
$(i,i',j,j')\in I^2\times J^2$. Since $I\cap J=\varnothing$,
there are three cases up to symmetry.

\medskip

$\bullet$ {\bf Case 1.}
Suppose $i\ne i'$ and $j\ne j'$. Since $I\cap J=\varnothing$,
the four indices $i,i',j,j'$ are all distinct. By
Lemma~\ref{diag_lem}\textup{(i)} and
$\|g_n\|_2\le\|\varphi\|_2\le M$, the total contribution of these
terms is bounded by
$
 Cm^2.
$

\medskip

$\bullet$ {\bf Case 2.}
Suppose $i=i'$ and $j\ne j'$. The case $i\ne i'$ and $j=j'$
is treated identically. By \eqref{sq_dec},
\begin{align*}
 &\E[g_n(Z_i)^2g_n(Z_j)g_n(Z_{j'})]\\
 &\quad=
 \|g_n\|_2^2\E[g_n(Z_j)g_n(Z_{j'})]
 + \bigg( \sum_{q=1}^{d-1}
   \E[B_{q,n}(Z_i)g_n(Z_j)g_n(Z_{j'})] \bigg)
 +\E[\mathcal H_n(Z_i)g_n(Z_j)g_n(Z_{j'})].
\end{align*}

For the first term, \eqref{use1} and \eqref{small_c2} give
\begin{align*}
 \sum_{i\in I}
   \sum_{\substack{j,j'\in J\\ j\ne j'}}
 \|g_n\|_2^2
 \big|\E[g_n(Z_j)g_n(Z_{j'})]\big|
  \le C | I |
 \sum_{j\in J}\sum_{j'\in J}|r(j,j')|^d
 \le C|I||J|
 \le Cm^2.
\end{align*}

For the low-chaos terms, Lemma~\ref{diag_lem}\textup{(iii)}
together with \eqref{sq_low} yields
\[
 C\sum_{q=1}^{d-1}
 (1+\log n)^{\frac{q}{2}}m^{2-\frac{q}{2d}}.
\]
For the high-chaos term, Lemma~\ref{diag_lem}\textup{(ii)}
together with \eqref{high_l2} gives
$
 C\sqrt n\,m^{\frac{3}{2}}.
$
Thus the contribution from {\bf Case 2}
is bounded by
\begin{align}\label{one_pair}
 Cm^2
 +C\sum_{q=1}^{d-1}
 (1+\log n)^{\frac{q}{2}}m^{2-\frac{q}{2d}}
 +C\sqrt n\,m^{\frac{3}{2}}.
\end{align}

\medskip

$\bullet$ {\bf Case 3.}
Suppose $i=i'$ and $j=j'$. Since $I\cap J=\varnothing$, we have
$i\ne j$. Let
$
 v_n=\E[g_n(\cZ)^2].
$
By orthogonality \eqref{OL_pq} and \eqref{sq_dec},
\begin{align}\label{sq_mehler}
 \E[g_n(Z_i)^2g_n(Z_j)^2]
 =
 v_n^2+\sum_{q\ge1}\|B_{q,n}\|_2^2 r(i,j)^q.
\end{align}
Since $v_n\le M^2$, the constant term in \eqref{sq_mehler},
summed over $I\times J$, contributes at most
$ Cm^2$.

We next treat the low-chaos terms. By \eqref{small_c2},
\begin{align}\label{pair_ld}
 \sum_{i\in I,j\in J}|r(i,j)|^d
 &\le
 K\min\{|I|,|J|\}
 \le Cm.
\end{align}
Hence, for $1\le q<d$, H\"older's inequality gives
\begin{align}
 \sum_{i\in I,j\in J}|r(i,j)|^q
 &\le
 (|I||J|)^{1-\frac{q}{d}}
 \left(
   \sum_{i\in I,j\in J}|r(i,j)|^d
 \right)^{q/d}
 \le
 Cm^{2-\frac{q}{d}}.
 \label{corr_low}
\end{align}
Combining \eqref{corr_low} with \eqref{sq_low}, the total
contribution from the orders $1\le q<d$ is bounded by
\[
 C\sum_{q=1}^{d-1}
 (1+\log n)^q m^{2-\frac{q}{d}}.
\]

For $q\ge d$, the small-correlation condition \eqref{small_c1}
gives
$ |r(i,j)|\le\eps$
for $ (i,j)\in I\times J.$
Hence, for each such pair, \eqref{sq_high} yields
\[
 \sum_{q\ge d}
 \|B_{q,n}\|_2^2|r(i,j)|^q
 \le
 Cn^\dr |r(i,j)|^d,
\]
where
 $\dr=\frac{2\eps}{1+\eps}$,
and $C=C_{d,M,\eps}$ is independent of $n,i,j$. Summing over
$I\times J$ and using \eqref{small_c2}, we obtain
\begin{align*}
 \sum_{i\in I,j\in J}
 \sum_{q\ge d}
 \|B_{q,n}\|_2^2|r(i,j)|^q
 &\le
 Cn^\dr
 \sum_{i\in I,j\in J}|r(i,j)|^d
 \le
 Cn^\dr |I|
 \le
 Cn^\dr m.
\end{align*}
Therefore, the total contribution from {\bf Case 3} is bounded by
\begin{align}\label{two_pair}
 Cm^2
 +C\sum_{q=1}^{d-1}
 (1+\log n)^q m^{2-\frac{q}{d}}
 +Cn^\dr m.
\end{align}

It remains to compare the bounds above with the right-hand side of
\eqref{adj_bd}.  It suffices to take $\al=\frac{1}{4d}$, since the
right-hand side is nonincreasing in $\al$ for $m\le N$.
We have $\dr=\frac{2\eps}{1+\eps}<\frac{1}{2}$, $n\le A_0N$, and $1\le
m\le N$.
Consequently,
\begin{align*}
 m^2+\sqrt n\,m^{\frac{3}{2}}+n^\dr m
 \le C N^{1-\al}m^{1+\al}.
\end{align*}
For $1\le q<d$, both $1-\al- \frac{q}{2d}$ and $1-\al- \frac{q}{d} $ are
positive,
so the remaining terms satisfy
\begin{align*}
 \frac{(1+\log n)^{\frac{q}{2}}m^{2- \frac{q}{2d} }}{N^{1-\al}m^{1+\al}}
 &\le C(1+\log N)^{\frac{q}{2}}N^{-\frac{q}{2d}}\le C,\\
 \frac{(1+\log n)^q m^{2-\frac{q}{d} }}{N^{1-\al}m^{1+\al}}
 &\le C(1+\log N)^qN^{-\frac{q}{d}}\le C.
\end{align*}
Combining {\bf Cases 1--3} proves \eqref{adj_bd}.
Hence, the proof is complete.
\qedhere

\end{proof}

\subsection{Restoring small-support terms in \texorpdfstring{$L^2$}{L2}}
\label{small_support_sec}

For Step~\eqref{step_restore}, the following lemma restores the
discarded tail in the same $L^2$ norm as the truncated maximal
bound.  Taking an absolute value destroys Hermite rank, but
small support makes the low Hermite coefficients small enough.

\begin{lemma}[\textsf{Small-support estimate}]
\label{small_support}
Assume \eqref{ns_rows}.  Let $u\in L^2(\ga)$ vanish outside a
measurable set $A$ with $\ga(A)\leq 1/ n$.  Then
\begin{align}\label{small_support_bd}
 \E\bigg[ \Big(\sum_{k=1}^n|u(Z_k)|\Big)^2 \bigg]
 \le C_{d,K}n\|u\|_2^2.
\end{align}
In particular, 
\begin{align}\label{bd_MA}
\E\bigg[ \max_{m\le n} \Big|\sum_{k=1}^m u(Z_k)\Big|^2 \bigg]
 \leq  C_{d,K}n\|u\|_2^2.
 \end{align}
\end{lemma}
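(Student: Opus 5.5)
The plan is to expand the square and treat $w:=|u|$ through its Hermite decomposition. The function $w$ has lost the Hermite rank of $u$, but its low-order coefficients are small because $w$ is supported on a set of small Gaussian measure. Once \eqref{small_support_bd} is proved, the maximal bound \eqref{bd_MA} is immediate: for every $m\le n$ we have $|\sum_{k=1}^m u(Z_k)|\le\sum_{k=1}^n|u(Z_k)|$. For $n\le 2$, Cauchy--Schwarz gives $(\sum_k w(Z_k))^2\le n\sum_k w(Z_k)^2$, whose expectation is $n^2\|u\|_2^2\le 2n\|u\|_2^2$. So we may assume $n\ge 3$ and $\gamma(A)\le 1/n<\frac12$.

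\textbf{Step 1: coefficient bounds.} Enlarge $A$ to a measurable $A'\supseteq A$ with $\gamma(A')=1/n$, which is possible since $\gamma$ is nonatomic. Write $w=\sum_{q<d}a_qh_q+w_{\ge d}$, where $a_q=\langle w,h_q\rangle$ and $w_{\ge d}$ has Hermite rank at least $d$ with $\|w_{\ge d}\|_2\le\|u\|_2$.
\begin{itemize}
\item For $q=0$, Cauchy--Schwarz gives $a_0=\E|u(\cZ)|\le\|u\|_2\gamma(A)^{1/2}\le\|u\|_2n^{-1/2}$.
\item For $1\le q<d$, we have $a_q^2\le\|u\|_2^2\int_{A'}h_q^2\,d\gamma$. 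Lemma~\ref{rearr_lem}(i) with $p=1/n$ then yields $a_q^2\le C_q\|u\|_2^2\,n^{-1}(1+\log n)^q$.
\end{itemize}
Because we work with $A'$ of measure exactly $1/n$, this avoids the monotonicity issue in $p$ that arose in Proposition~\ref{clip_prop}.

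\textbf{Step 2: expand the square.} We have
\[
\E\Big[\Big(\sum_k w(Z_k)\Big)^2\Big]=\sum_{k,l}\E[w(Z_k)w(Z_l)].
\]
By the orthogonality relation \eqref{OL_pq}, cross terms between distinct chaoses vanish. Hence
\[
\E[w(Z_k)w(Z_l)]\le\sum_{q<d}a_q^2|r_{kl}|^q+\|u\|_2^2|r_{kl}|^d .
\]
We bound the resulting sums over $k,l$ term by term.
\begin{itemize}
\item The rank-$\ge d$ part sums to at most $Kn\|u\|_2^2$ by \eqref{ns_rows}.
\item The $q=0$ part gives $n^2a_0^2\le n\|u\|_2^2$.
\item For $1\le q<d$, H\"older's inequality as in \eqref{corr_low} gives $\sum_{k,l}|r_{kl}|^q\le n^{2(1-q/d)}(nK)^{q/d}=K^{q/d}n^{2-q/d}$. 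The contribution is therefore at most $C\|u\|_2^2\,n^{1-q/d}(1+\log n)^q\le C_{d,K}\,n\|u\|_2^2$, since $n^{-q/d}(1+\log n)^q$ is bounded.
\end{itemize}
Summing these gives \eqref{small_support_bd}.

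\textbf{Main obstacle.} The delicate point is the low-order coefficient bound for $1\le q<d$. The naive estimate $a_q^2\le\|u\|_2^2$ combined with $n^{2-q/d}$ would give a superlinear bound. What saves the argument is the small-set estimate of Lemma~\ref{rearr_lem}(i): it gains a factor $1/n$ at the cost of only a power of $\log n$, and this logarithm is then absorbed by the polynomial saving $n^{-q/d}$.
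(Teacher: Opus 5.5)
Your proof is correct, and its skeleton matches the paper's. You expand $|u|=\sum_q b_qh_q$ and use \eqref{OL_pq} to reduce to $\sum_q b_q^2\sum_{i,j}|r_{ij}|^q$. Then you treat $q=0$ via $b_0^2\le\gamma(A)\|u\|_2^2$, $q\ge d$ via the row bound \eqref{ns_rows}, and $1\le q<d$ via H\"older on the correlations.

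The one real difference is the coefficient bound for $1\le q<d$. The paper does not use Lemma~\ref{rearr_lem}(i) here. It applies H\"older directly, $\int_A h_q^2\,d\gamma\le\|h_q\|_{2d}^2\,\gamma(A)^{1-\frac1d}\le C_d\, n^{-1+\frac1d}$. This is weaker than your $n^{-1}(1+\log n)^q$ but exactly sufficient, because $n^{-1+\frac1d}\cdot n^{2-\frac qd}=n^{1+\frac{1-q}{d}}\le n$ for $q\ge1$.

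The paper's choice makes the argument shorter. It needs no enlargement of $A$ to a set of measure exactly $1/n$, no separate case $n\le2$, and the same bound is reused verbatim in \eqref{restore_coef}. Your route gives a nearly sharp coefficient estimate, leaving slack $n^{-q/d}(1+\log n)^q$, but that slack is not needed.

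So your closing remark overstates the point. The logarithmic small-set estimate is not what saves the argument; any gain of order $\gamma(A)^{1-1/d}$ already does.
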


\begin{proof}
Since $u\in L^2(\ga)$, we can expand  $|u|=\sum_{q\ge0}b_qh_q$ in $L^2(\ga)$.
Put
$r_{ij} = \E[ Z_i Z_j]$.  Then, we can first deduce from 
 the orthogonality relation \eqref{OL_pq}
that

\noi
\begin{align}\label{support_split}
 \E\left(\sum_{k=1}^n |u| (Z_k)\right)^2
 &\le n^2b_0^2
   +\sum_{q=1}^{d-1}b_q^2\sum_{i,j=1}^n|r_{ij}|^q
   +\sum_{q\ge d}b_q^2\sum_{i,j=1}^n|r_{ij}|^q.
\end{align}

\noi
For the constant term,
\begin{align*}
 b_0^2=\left(\int_A |u|\,d\ga\right)^2
 \le \ga(A) \|u\|_2^2\le n^{-1}\|u\|_2^2.
\end{align*}
For $q\ge d$, the row bound \eqref{ns_rows} gives
\begin{align*}
 \sum_{i,j=1}^n|r_{ij}|^q\le Kn,
\end{align*}
so the corresponding part of \eqref{support_split} is at most
$Kn\|u\|_2^2$.

If $d\ge2$ and $1\le q<d$, H\"older's inequality gives
\begin{align} \label{small_cal}
\begin{aligned}
 \sum_{i,j=1}^n|r_{ij}|^q
 \le K^{\frac{q}{d}}n^{2-\frac{q}{d}}
 \AND
 b_q^2
 \le\|u\|_2^2\int_A h_q^2\,d\ga
 &\le\|u\|_2^2\|h_q\|_{2d}^2 \ga(A)^{1-\frac{1}{d}} \\
 &\le C_d\|u\|_2^2n^{-1+\frac{1}{d}}.
\end{aligned}
\end{align}
Consequently,
\begin{align*}
 b_q^2\sum_{i,j=1}^n|r_{ij}|^q
 \le C_{d,K}\|u\|_2^2n^{1+\frac{1-q}{d}}
 \le C_{d,K}n\|u\|_2^2.
\end{align*}
There are only finitely many such orders.  For $d=1$, the low-order
sum is empty.  
Therefore, combining the preceding estimates proves  \eqref{small_support_bd}.  
Finally, 
  from $\max_{m\le n}\left|\sum_{k=1}^m u(Z_k)\right|
 \le\sum_{k=1}^n|u(Z_k)|$,
 we 
prove  \eqref{bd_MA}.
\qedhere

\end{proof}

\subsection{Proof of the endpoint maximal inequality}
\label{max_pf}

\begin{proof}[\textbf{Proof of Theorem~\ref{ns_max} for $p=2$}]
We first prove \eqref{ns_l2} when distinct coordinates have
small correlations, then remove this restriction by a partition
argument.  The cases $f=0$ and $n=1$ are immediate.
By homogeneity, we may otherwise assume $\|f\|_2=1$ and $n\ge2$.

\smallskip
$\bullet$  \textbf{Separated case.}
First assume\footnote{The choice $\dl=\frac{1}{8}$ is a
convenient fixed small number less than $\frac{1}{3}$,
as required by \eqref{small_c1}.}
\begin{align}\label{assump_31}
 |\E[Z_iZ_j]|\le\dl=\frac{1}{8},
 \quad i\ne j.
\end{align}
As in \eqref{trunc_def}--\eqref{g_def}, define
\begin{align*}
 T_n=f\ind_{\{|f|\le\sqrt n\}}
 \AND
 g_n=T_n-\sum_{q<d}\be_{q,n}h_q,
\end{align*}
where $\be_{q,n}=\langle T_n,h_q\rangle_{L^2(\ga)}$.
The estimates in Sections~\ref{clip}--\ref{square} give
\begin{align}\label{ns_clip}
 \begin{aligned}
 &\|g_n\|_2\le1,\quad \|g_n\|_4^4\le C_dn,
   \quad |\be_{0,n}|\le n^{-\frac{1}{2}},\\
 &|\be_{q,n}|
   \le C_qn^{-\frac{1}{2}}(1+\log n)^{\frac{q}{2}},
   \quad 1\le q<d.
 \end{aligned}
\end{align}
The Gaussian vector $(Z_1,\ldots,Z_n)$ satisfies the hypotheses
of Proposition~\ref{adj_prop} with $N=n$, $A_0=M=1$,
and $\eps=\dl = 1/8$.
Thus Proposition~\ref{adj_prop} and Lemma~\ref{max_lem} yield
\begin{align}\label{ns_gmax}
 \E\max_{0\le m\le n}
 \left|\sum_{k=1}^m g_n(Z_k)\right|^2
 \le C_{d,K}n.
\end{align}

\smallskip
$\bullet$
\textbf{Restoration in $L^2$: Step~\eqref{step_restore}.}
  Put
$E_n=\{|f|>\sqrt n\}$ and
$\tau_n=\|f\ind_{E_n}\|_2\le1$.
Since $\ga(E_n)\leq 1/n$,
Lemma~\ref{small_support} gives
\begin{align}\label{ns_tail}
 \left\|\max_{0\le m\le n}
 \left|\sum_{k=1}^m(f-T_n)(Z_k)\right|\right\|_2
 \le C_{d,K}\sqrt n\,\tau_n.
\end{align}
Next, let $P_n=\sum_{q<d}\be_{q,n}h_q$ as before.
We claim that
\begin{align}\label{12d}
 \left\|\max_{0\le m\le n}
 \left|\sum_{k=1}^mP_n(Z_k)\right|\right\|_2
 \le C_{d,K}\sqrt n\,\tau_n.
\end{align}
Since the low Hermite coefficients of $f$ vanish,
$\be_{q,n}=-\langle f\ind_{E_n},h_q\rangle$.
Thus $n|\be_{0,n}|\le\sqrt n\,\tau_n$.
For $d\ge2$ and $1\le q<d$, the small-set calculation \eqref{small_cal}
in the proof of Lemma~\ref{small_support} gives
\begin{align}\label{restore_coef}
 |\be_{q,n}|
 \le\tau_n
 \left(\int_{E_n}h_q^2\,d\ga\right)^{\frac{1}{2}}
  \leq \tau_n  \| h_q\|_{2d} \, \ga(E_n)^{\frac{1}{2} - \frac{1}{2d}}
 \le C_d\tau_n n^{-\frac{1}{2}+\frac{1}{2d}}.
\end{align}
It remains to combine this with an $L^2$ maximal estimate
for   fixed   $1\leq q < d$.

Fix such a $q$.  By \eqref{OL_pq}, for every integer interval
$I\subset\{1,\ldots,n\}$ of cardinality $m$,
\begin{align}\label{ns_low}
 \E\left|\sum_{k\in I}h_q(Z_k)\right|^2
 \le\sum_{i,j\in I}|\E[Z_iZ_j]|^q
 \le K^{\frac{q}{d}}m^{2-\frac{q}{d}},
\end{align}
where the last inequality follows from H\"older's inequality
as in \eqref{corr_low}.
We now apply Lemma~\ref{com_lem} to obtain the required
low-chaos maximal estimate.
Let $n'$ be the smallest power of two with $n'\ge n$,
so that $n\le n'<2n$, and set
\begin{align*}
 X_k=
 \begin{cases}
  h_q(Z_k),&1\le k\le n,\\
  0,&n<k\le n'.
 \end{cases}
\end{align*}
Using the dyadic partitions $\mathcal D_\ell$ from
Lemma~\ref{com_lem} with $N=n'$,
we can obtain 

\noi
\begin{align}\label{use2a2}
 \max_{0\le m\le n}
 \left|\sum_{k=1}^m h_q(Z_k)\right|
 \le
 \sum_{\substack{\ell\le n'\\ \ell\ {\rm dyadic}}}
 \max_{D\in\mathcal D_\ell}
  \Big| \sum_{k\in D}X_k   \Big|.
\end{align}
For each dyadic length $\ell$, there are $n' /\ell$
blocks in $\mathcal D_\ell$.
Since $D\cap\{1,\ldots,n\}$ is empty or an integer interval
of length at most $\ell$, \eqref{ns_low} yields
\begin{align*}
 \left\|
 \max_{D\in\mathcal D_\ell}
 \Big|  \sum_{k\in D}X_k \Big|
 \right\|_2^2
 &\le
 \sum_{D\in\mathcal D_\ell}
 \left\|\sum_{k\in D}X_k\right\|_2^2\\
 &\le
 K^{\frac{q}{d}}\frac{n'}{\ell}\,
 \ell^{2-\frac{q}{d}}
 \le Cn\,\ell^{1-\frac{q}{d}}.
\end{align*}
Here the first inequality follows from
$\max_a|U_a|^2\le\sum_a|U_a|^2$, and the last uses $n'<2n$.
Consequently,
\begin{align}\label{use2b}
 \left\|
 \max_{D\in\mathcal D_\ell}
 \Big|  \sum_{k\in D}X_k  \Big|
 \right\|_2
 \le C\sqrt n\,\ell^{\frac{1}{2}(1-\frac{q}{d})}.
\end{align}
Combining \eqref{use2a2} and \eqref{use2b}
with Minkowski's inequality gives
\begin{align}\label{ns_lowmax}
 \begin{aligned}
 \left\|  \max_{0\le m\le n}
 \Big|  \sum_{k=1}^m h_q(Z_k) \Big|   \right\|_2
 \leq
 C\sqrt n
 \sum_{\substack{\ell\le n'\\ \ell\ {\rm dyadic}}}
 \ell^{\frac{1}{2}(1-\frac{q}{d})} 
& \leq
 C_{d,K,q}\sqrt n\,
 (n')^{\frac{1}{2}(1-\frac{q}{d})} \\
 &\le C_{d,K,q}n^{1-\frac{q}{2d}}.  
 \end{aligned}
 \end{align}
The second inequality uses $q<d$, so the dyadic sum is
bounded by a constant times its largest term.

Combining \eqref{restore_coef} and \eqref{ns_lowmax},
we obtain
\begin{align*}
 |\be_{q,n}|
 \left\|\max_{0\le m\le n}
 \Big|   \sum_{k=1}^m h_q(Z_k)  \Big|  \right\|_2
 &\le
 C\tau_n n^{-\frac{1}{2}+\frac{1}{2d}}
 n^{1-\frac{q}{2d}}\\
 &=C\tau_n n^{\frac{1}{2}-\frac{q-1}{2d}}
 \le C\sqrt n\,\tau_n.
\end{align*}
Summing over $1\le q<d$ and including the constant
correction proves \eqref{12d}.
When $d=1$, only the constant correction is present.
Since $f=g_n+(f-T_n)+P_n$, the  triangle inequality
and Minkowski's inequality, together with \eqref{ns_gmax},
\eqref{ns_tail}, and \eqref{12d}, give
\begin{align*}
 \left\|\max_{0\le m\le n}
 \left|\sum_{k=1}^m f(Z_k)\right|\right\|_2
 \le C_{d,K}\sqrt n(1+\tau_n)
 \le C_{d,K}\sqrt n.
\end{align*}
This proves \eqref{ns_l2} under \eqref{assump_31}.
%
%

\medskip
$\bullet$
\textbf{Finite partition and recombination:
Step~\eqref{step_partition}.}
We now remove \eqref{assump_31}.
Construct a graph on $\{1,\ldots,n\}$ by joining two
distinct indices $i$ and $j$ whenever
$|\E[Z_iZ_j]|>\dl=\frac{1}{8}$.
By \eqref{ns_rows}, for every $i$,
\begin{align*}
 K
 \ge\sum_{j=1}^n|\E[Z_iZ_j]|^d
 \ge\dl^d\,
 \#\bigl\{j\ne i:\ |\E[Z_iZ_j]|>\dl\bigr\}.
\end{align*}
Every vertex therefore has degree at most
$K\dl^{-d}=K8^d$.
Then, we can  partition  $\{1,\ldots,n\}$ into at most
$L:=1+\lfloor K8^d\rfloor$ classes, say $C_1,\ldots,C_L$,
such that $|\E[Z_iZ_j]|\le\dl$ for distinct $i,j\in C_\ell$.
Each class inherits \eqref{ns_rows} with the same $K$,
so the separated-coordinate estimate applies within each class.

For each $\ell$, write
$C_\ell=\{k_{\ell,1}<\cdots<k_{\ell,n_\ell}\}$
in the order inherited from $\{1,\ldots,n\}$.
For every $m\in\{0,\ldots,n\}$, the indices of $C_\ell$
lying in $\{1,\ldots,m\}$ form an initial segment
$\{k_{\ell,1},\ldots,k_{\ell,r_\ell(m)}\}$
for some $0\le r_\ell(m)\le n_\ell$.
Hence
\begin{align*}
 \max_{0\le m\le n}
 \left|\sum_{k=1}^m f(Z_k)\right|
 &\le
 \sum_{\ell=1}^L
 \max_{0\le r\le n_\ell}
 \left|\sum_{a=1}^r f(Z_{k_{\ell,a}})\right|.
\end{align*}
Applying the separated-coordinate $L^2$ estimate to each
class, with its own length $n_\ell$, and using
Minkowski's inequality, we obtain
\begin{align*}
 \left\|\max_{0\le m\le n}
 \left|\sum_{k=1}^m f(Z_k)\right|\right\|_2
 \le C_{d,K}\sum_{\ell=1}^L\sqrt{n_\ell}
 \le C_{d,K}\sqrt{L\sum_{\ell=1}^L n_\ell}
 =C_{d,K}\sqrt{Ln}.
\end{align*}
Since $L=1+\lfloor K8^d\rfloor$ depends only on $d$ and $K$,
the factor $\sqrt L$ can be absorbed into the constant.
Hence, the proof is complete.
\qedhere

\end{proof}

\section{Stationary BMD principles}
\label{stat_pf}

We now apply the endpoint $p=2$ of Theorem~\ref{ns_max} and
Proposition~\ref{ns_int} to the stationary BMD principles.
For finite Hermite expansions, the Breuer--Major theorems give
finite-dimensional convergence, while covariance bounds and
hypercontractivity give tightness.  The maximal
inequality then controls the Hermite tails uniformly in the
$L^2$ path norm and passes convergence to a general
$\varphi\in L^2(\ga)$.

\subsection{The discrete-time BMD}
\label{disc_pf}

\begin{proof}[Proof of Corollary~\ref{disc_thm}]
Put
$K=\sum_{k\in\Z}|\rho_G(k)|^d$,
and, for $M\ge d$, define
\begin{align}\label{herm_cut}
 \varphi_M=\sum_{q=d}^M c_qH_q.
\end{align}
We write $V_n^f$ and $\wt{V_n^f}$ for \eqref{d_V} and
\eqref{hat_V}, respectively, with $f$ in place of $\varphi$.

We first prove the functional convergence for the step process
associated with the finite Hermite expansion $\varphi_M$.
For $0\le s<t\le1$, put
$I_{s,t}^{(n)}=\{\lfloor ns\rfloor+1,\ldots,\lfloor nt\rfloor\}$.
Using the orthogonality relation and $|\rho_G|\le1$, we obtain
\begin{align}
 \E\left|
 V_n^{\varphi_M}(t)-V_n^{\varphi_M}(s)
 \right|^2
&=
 \frac{1}{n}
 \sum_{i,j\in I_{s,t}^{(n)}}
 \sum_{q=d}^M q!c_q^2\rho_G(i-j)^q
 \notag\\
 &\quad\le
 \frac{\|\varphi_M\|_2^2}{n}
 \sum_{i,j\in I_{s,t}^{(n)}}
 |\rho_G(i-j)|^d
\leq
 K\|\varphi\|_2^2
 \frac{\lfloor nt\rfloor-\lfloor ns\rfloor}{n}.
 \label{disc_trunc_l2}
\end{align}
Moreover,
$V_n^{\varphi_M}(t)-V_n^{\varphi_M}(s)$
belongs to the sum of the first $M$ Wiener chaoses.
Hence, for every fixed $p>2$, hypercontractivity
\cite[Theorem~2.8.12]{NP12} and \eqref{disc_trunc_l2} yield
\begin{align}\label{disc_trunc_lp}
 \left\|
 V_n^{\varphi_M}(t)-V_n^{\varphi_M}(s)
 \right\|_p
 \le
 C_{p,M,K}\|\varphi\|_2
 \left(
 \frac{\lfloor nt\rfloor-\lfloor ns\rfloor}{n}
 \right)^{\frac{1}{2}}.
\end{align}
The standard tightness criterion for partial-sum processes
(see, e.g., \cite[Lemma~3.1]{NN20}) therefore implies that
$\{V_n^{\varphi_M}\}_{n\ge1}$ is tight in $\mathbf D([0,1])$.

On the other hand, Theorem~\ref{bm_thm} gives the convergence of the
finite-dimensional distributions,
$ V_n^{\varphi_M}
 \stackrel{\rm f.d.d.}{\Longrightarrow}
 \s_{G,M}\cW,
$
where
$ \s_{G,M}^2
 =
 \sum_{q=d}^M q!c_q^2
 \sum_{k\in\Z}\rho_G(k)^q.
$
Thus, we have
\begin{align}\label{disc_trunc}
 V_n^{\varphi_M}
 \LRA
 \s_{G,M}\cW
 \quad\text{in }\mathbf D([0,1]).
\end{align}
In particular, $\{V_n^{\varphi_M}\}_{n\ge1}$ is $C$-tight.

We now remove the Hermite truncation.  Since
$\varphi-\varphi_M$ has Hermite rank at least $d$, every finite
segment of $\mathbf G$ satisfies the assumptions of
Theorem~\ref{ns_max} with the same row-sum constant $K$.
Consequently,
\begin{align}\label{disc_approx}
 \sup_{n\ge1}
 \E \bigg[ \|V_n^\varphi-V_n^{\varphi_M}\|_\infty^2 \bigg]
 &=\sup_{n\ge1}\frac{1}{n}
   \E \bigg( \max_{0\le m\le n}
   \Big|\sum_{k=1}^m
         (\varphi-\varphi_M)(G_k)  \Big|^2 \bigg)  \notag\\
 &\le C_{d,K}\|\varphi-\varphi_M\|_2^2
 \longrightarrow0
\end{align}
as $M\to\infty$.
Moreover,
\begin{align*}
 |\s_G^2-\s_{G,M}^2|
 &\le
 \sum_{q>M}q!c_q^2
 \sum_{k\in\Z}|\rho_G(k)|^q
 \le
 K\sum_{q>M}q!c_q^2
 =
 K\|\varphi-\varphi_M\|_2^2
 \longrightarrow0.
\end{align*}
Combining \eqref{disc_trunc}, \eqref{disc_approx}, and
$\s_{G,M}\to\s_G$ with Billingsley's theorem
\cite[Theorem~3.2]{Bil99}, we conclude that
$V_n^\varphi
 \LRA
 \s_G\cW$
 in $\mathbf{D}([0,1])$.
Since the limit is continuous, the sequence
$\{V_n^\varphi\}_{n\ge1}$ is $C$-tight.
Finally, Proposition~1.7 of \cite{MPZ26} shows that convergence of
the step partial-sum process in $\mathbf D([0,1])$ to a continuous
limit is equivalent to convergence of its polygonal interpolation in
$\mathbf C([0,1])$.  As a result, we have
$\wt{V_n^\varphi}  \LRA \s_G\cW$
in $\mathbf{C}([0,1])$.
This completes the proof.
\end{proof}

\subsection{The continuous-time BMD}
\label{cont_pf}

We first justify the continuity of $\rho_B$ asserted in the
introduction.  Joint measurability gives
$B_\cdot\in L^2_{\rm loc}(\R)$ almost surely.  Continuity of
translations in $L^2$ therefore implies
\begin{align*}
 \int_0^1|B_{x+h}-B_x|^2\,dx\longrightarrow0
 \quad\text{almost surely as }h\to0.
\end{align*}
These integrals are bounded in $L^2(\Omega)$, since
$\E|B_{x+h}-B_x|^4\le48$.
Uniform integrability, Fubini's theorem, and stationarity yield
$\rho_B(h)\to1$.  The covariance identity then gives continuity
of $\rho_B$ and mean-square continuity of $\mathbf B$.

The continuous-time BMD principle is obtained from the discrete theory
through a simple transfer mechanism.  The key input is the maximal
inequality of Theorem~\ref{ns_max}.  After choosing a lattice
spacing for which the sampled covariance is $\ell^d$-summable, we
decompose the time integral into lattice fibers and apply the discrete
maximal estimate uniformly along these fibers.  Proposition~\ref{ns_int}
then provides the uniform approximation bound needed to pass from
finite Hermite expansions to a general function in $L^2(\ga)$.

\begin{lemma}[\textsf{Almost-everywhere lattice summability}]
\label{lat_lem}
Let $h:\R\to[0,\infty)$ be measurable with $h\in L^1(\R)$ and
$h(0)<\infty$.
Then, for every \(A>0\),
\begin{align}\label{lat_int}
 \int_A^{2A}\sum_{k\in\Z}h(ak)\,da<\infty.
\end{align}
Consequently, \(\sum_{k\in\Z}h(ak)<\infty\) for almost every
\(a\in[A,2A]\).
\end{lemma}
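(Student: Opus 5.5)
The plan is to use Tonelli's theorem to interchange the integral in $a$ with the lattice sum, and then to change variables in each term separately. Since $h\ge0$ is measurable, the map $(a,k)\mapsto h(ak)$ is measurable on $[A,2A]\times\Z$. Tonelli then gives
\begin{align*}
 \int_A^{2A}\sum_{k\in\Z}h(ak)\,da
 =\sum_{k\in\Z}\int_A^{2A}h(ak)\,da .
\end{align*}
The term $k=0$ contributes exactly $A\,h(0)$, which is finite by assumption. This is the only place where the hypothesis $h(0)<\infty$ enters.

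For $k\ne0$, substitute $x=ak$, so that $da=dx/|k|$. The interval $a\in[A,2A]$ is carried onto an interval of $x$ between $A|k|$ and $2A|k|$ in absolute value, lying on the side of the sign of $k$. This yields
\begin{align*}
 \int_A^{2A}h(ak)\,da=\frac{1}{|k|}\int_{\{A|k|\le|x|\le2A|k|,\ \operatorname{sgn}x=\operatorname{sgn}k\}}h(x)\,dx
 \le\frac{1}{A}\int_{\{A|k|\le|x|\le2A|k|\}}h(x)\,dx ,
\end{align*}
where the inequality uses $1/|k|\le 1/(A^{-1}|x|)=A/|x|$ on the range of integration. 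In fact we may keep this weight as $A/|x|\cdot$ and bound it by $1/A$ only after summing.

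The main step is to control the overlap of these annuli as $k$ varies. For fixed $x\ne0$, the condition $A|k|\le|x|\le2A|k|$ is equivalent to $|x|/(2A)\le|k|\le|x|/A$, together with a sign constraint on $k$. The number of admissible $k$ is therefore at most $|x|/(2A)+1$.

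Summing the exact expression $\frac{1}{|k|}\int\cdots$ over $k\ne0$, each point $x$ is counted with total weight
\begin{align*}
 \sum_{k}\frac{1}{|k|}\ind_{\{|x|/(2A)\le|k|\le|x|/A\}}
 \le\Bigl(\frac{|x|}{2A}+1\Bigr)\frac{2A}{|x|}\le 1+\frac{2A}{|x|}.
\end{align*}
The constraint $|k|\ge1$ forces $|x|\ge A$, so this weight is at most $3$. Consequently the total contribution from $k\ne0$ is bounded by $3\|h\|_{L^1(\R)}$, and
\begin{align*}
 \int_A^{2A}\sum_{k\in\Z}h(ak)\,da\le A\,h(0)+3\|h\|_{L^1(\R)}<\infty,
\end{align*}
which proves \eqref{lat_int}.

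The almost-everywhere statement follows at once. A nonnegative function with finite integral over $[A,2A]$ is finite almost everywhere on that interval, so $\sum_{k\in\Z}h(ak)<\infty$ for almost every $a\in[A,2A]$.

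The only point that needs care is the multiplicity count in the overlap step, where the sum of $1/|k|$ over the admissible $k$ must be shown to stay bounded uniformly in $x$. The remaining steps are routine.
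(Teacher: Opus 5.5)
Your proof is correct and follows the same route as the paper: Tonelli, the substitution $x=ak$, and a bound on the weight $\sum 1/|k|$ over the admissible $k$ that is uniform in $x$ (the paper obtains $1+\log 2$, you obtain $3$). One slip to fix: on $A|k|\le|x|\le 2A|k|$ one has $1/|k|\le 2A/|x|$, not $1/|k|\le A/|x|$, so your first displayed bound by $\frac{1}{A}\int h\,dx$ is false whenever $|k|<A$. Your final multiplicity count never uses that display, so you can simply delete it.
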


\begin{proof}
The term
$k=0$ contributes $Ah(0)$.  For positive $k$,
by Tonelli's theorem, we have
\begin{align*}
 \int_A^{2A}\sum_{k=1}^{\infty}h(ak)\,da
 &=\sum_{k=1}^{\infty}\frac{1}{k}\int_{Ak}^{2Ak}h(x)\,dx\\
 &=\int_A^\infty h(x)
   \sum_{\frac{x}{2A}\le k\le \frac{x}{A}}\frac{1}{k}\,dx
 \leq (1 + \log 2)\int_A^\infty h(x)\,dx.
\end{align*}
The negative
indices are treated in the same way with \(h(-x)\).  This proves the
lemma.
\end{proof}

We now prove  Proposition~\ref{ns_int} by integrating the maximal bound
over the unit-lattice fibers and controlling incomplete unit intervals.

\begin{proof}[Proof of Proposition~\ref{ns_int}]
By joint measurability, Fubini,  and the standard
normal marginals of $Y$, we have for every finite $R>0$,
\begin{align}\label{01_bdd}
 \E\int_0^R|g(Y(s))|\,ds
 =R\|g\|_1<\infty.
\end{align}
Thus the integrand is locally integrable almost surely,
and the integral paths are continuous.
Moreover, Minkowski's inequality gives
\begin{align}\label{cell_l2}
 \left\|\int_a^b|g(Y(s))|\,ds\right\|_2
 \le\int_a^b\|g(Y(s))\|_2\,ds
 =(b-a)\|g\|_2,
 \quad 0\le a\le b.
\end{align}
If $\eps=1$, then
\begin{align*}
 \sup_{0\le t\le1}|F_1^g(t)|
 =
 \sup_{0\le t\le1}
 \left|\int_0^t g(Y(s))\,ds\right|
 \le\int_0^1|g(Y(s))|\,ds.
\end{align*}
Hence \eqref{cell_l2} yields
\begin{align*}
 \left\|\sup_{0\le t\le1}|F_1^g(t)|\right\|_2
 \le\left\|\int_0^1|g(Y(s))|\,ds\right\|_2
 \le\|g\|_2.
\end{align*}

Assume henceforth that $0<\eps<1$ and put
$N=\lceil1/\eps\rceil-1\ge1$.
For every integer $m\ge0$, decomposing $[1,1+m]$
into unit intervals gives the fiber identity
\begin{align*}
 \int_1^{1+m}g(Y(s))\,ds
 &=
 \sum_{k=0}^{m-1}\int_0^1g(Y(1+u+k))\,du
 =
 \int_0^1\sum_{k=0}^{m-1}g(Y(1+u+k))\,du,
\end{align*}
with the empty sum interpreted as zero.
For each $u\in[0,1]$, the row bound \eqref{ns_fib}
allows us to apply Theorem~\ref{ns_max} with $p=2$
to the corresponding Gaussian fiber.
Taking the maximum inside the integral and then applying
Minkowski's inequality therefore yields
\begin{align}  \label{ns_end}
\begin{aligned}
 \left\|\max_{0\le m\le N}
 \Big|\int_1^{1+m}g(Y(s))\,ds\Big|\right\|_2
 &\le
 \int_0^1
 \left\|\max_{0\le m\le N}
 \Big| \sum_{k=0}^{m-1}g(Y(1+u+k)) \Big|
 \right\|_2\,du \\
 &\quad\le C_{d,K}\sqrt N\,\|g\|_2.
\end{aligned}
\end{align}
Set
$A_k=\int_{1+k}^{2+k}|g(Y(s))|\,ds$ for $0\le k<N$.
By \eqref{cell_l2},
\begin{align}\label{12_bdd}
 \left\|\max_{0\le k<N}A_k\right\|_2
 \le
 \left(\sum_{k=0}^{N-1}\E[A_k^2]\right)^{\frac{1}{2}}
 \le\sqrt N\,\|g\|_2.
\end{align}

To pass from integer endpoints to arbitrary times,
fix $t\in[0,1]$ and put $r=t/\eps$.
If $r\le1$, then
\begin{align*}
 \left|\int_0^r g(Y(s))\,ds\right|
 \le\int_0^1|g(Y(s))|\,ds.
\end{align*}
If $r>1$, set
$m=\lfloor r\rfloor-1$
and
$u=r-(1+m)\in[0,1)$.
Then $r=1+m+u$ and
\begin{align}\label{cell_dec}
 \int_0^r g(Y(s))\,ds
 &=
 \int_0^1g(Y(s))\,ds
 +\int_1^{1+m}g(Y(s))\,ds
+
 \int_{1+m}^{1+m+u}g(Y(s))\,ds.
\end{align}
This separates the initial unit cell, the complete unit cells
ending at $1+m$, and the remaining part of the last cell;
see Figure~\ref{fig_fiber_cells}.
Since $r\le1/\eps\le N+1$, we have $0\le m\le N$.
If $u>0$, necessarily $m<N$, and
\begin{align*}
 \left|\int_{1+m}^{1+m+u}g(Y(s))\,ds\right|
 \le\int_{1+m}^{2+m}|g(Y(s))|\,ds
 =A_m.
\end{align*}
If $u=0$, the last integral in \eqref{cell_dec} vanishes.

\begin{figure}[t]
\centering
\begin{tikzpicture}[
    x=2cm,
    y=.85cm,
    every node/.style={font=\small},
    >=Latex
]

\fill[gray!15] (0,0) rectangle (1,.32);
\fill[gray!8] (1,0) rectangle (4,.32);
\fill[gray!30] (4,0) rectangle (4.65,.32);

\draw[line width=.8pt] (0,0) -- (5.15,0);
\draw[line width=.8pt] (0,.32) -- (4.65,.32);
\foreach \x in {0,1,2,3,4}{
    \draw (\x,-.07) -- (\x,.39);
}
\draw (4.65,0) -- (4.65,.39);
\draw[dashed] (4.65,.32) -- (5,.32) -- (5,0);
\draw (5,-.07) -- (5,.07);

\node[below=3pt] at (0,0) {$0$};
\node[below=3pt] at (1,0) {$1$};
\node[below=3pt] at (2.5,0) {$\cdots$};
\node[below=3pt] at (4,0) {$1+m$};
\node[below=3pt] at (5,0) {$2+m$};

\draw[decorate,decoration={brace,amplitude=4pt}]
    (0,.65) -- (1,.65)
    node[midway,above=6pt] {initial cell};

\draw[decorate,decoration={brace,amplitude=4pt}]
    (1,.65) -- (4,.65)
    node[midway,above=6pt] {complete unit cells};

\draw[decorate,decoration={brace,amplitude=4pt}]
    (4,.65) -- (4.65,.65)
    node[midway,above=6pt] {incomplete cell};

\draw[->,line width=.8pt]
    (4.65,-.95) -- (4.65,-.12);
\node[below=3pt] at (4.65,-.95)
    {$r=t/\eps=1+m+u$};

\end{tikzpicture}
\caption{\small Decomposition of $[0,r]$ when $r>1$.
The incomplete part $[1+m,r]$ lies in the unit cell
$[1+m,2+m]$ and is absent when $u=0$.}
\label{fig_fiber_cells}
\end{figure}

Taking the supremum over $t\in[0,1]$ gives
\begin{align*}
 \sup_{0\le t\le1}
 \left|\int_0^{t/\eps}g(Y(s))\,ds\right|
 &\le\int_0^1|g(Y(s))|\,ds
 +
 \max_{0\le m\le N}
 \left|\int_1^{1+m}g(Y(s))\,ds\right| 
 +\max_{0\le k<N}A_k.
\end{align*}
Multiplying by $\sqrt\eps$ and using Minkowski's inequality,
\eqref{cell_l2}, \eqref{ns_end}, and \eqref{12_bdd}, we obtain
\begin{align*}
 \left\|\sup_{0\le t\le1}|F_\eps^g(t)|\right\|_2
 &\le
 \bigl(\sqrt\eps+C_{d,K}\sqrt{\eps N}\bigr)\|g\|_2
 \le C_{d,K}\|g\|_2,
\end{align*}
since $\eps N\le1$ and $\eps\le1$.
Hence,  \eqref{ns_intbd} follows immediately
and 
the assertion for differences follows by linearity.
\qedhere

\end{proof}

Now we are ready to prove the continuous-time BMD principle 
in the stationary setting.

\begin{proof}[Proof of Corollary~\ref{cont_thm}]
Write $J_d=\int_\R|\rho_B(x)|^d\,dx$.
Apply Lemma~\ref{lat_lem} to $h=|\rho_B|^d$ and fix $a>0$ such
that
\begin{align}\label{lat_Ka}
 K_a:=\sum_{k\in\Z}|\rho_B(ak)|^d<\infty.
\end{align}
For $Y^{(a)}(s)=B_{as}$, stationarity gives
\begin{align*}
 &\sup_{u\in[0,1]}\sup_{i\ge0}
 \sum_{j=0}^\infty
 \big|\E[Y^{(a)}(1+u+i)Y^{(a)}(1+u+j)]\big|^d
 =\sup_{i\ge0}\sum_{j=0}^\infty
 |\rho_B(a(j-i))|^d\le K_a.
\end{align*}
Thus Proposition~\ref{ns_int} applies to the rescaled process.
For a function   $g\in L^2(\ga)$ with Hermite rank at least $d$,
 put
$V_T^g(t)=T^{-\frac{1}{2}}\int_0^{Tt}g(B_x)\,dx$.
Use $F_\eps^{g,Y^{(a)}}$ to denote \eqref{int_proc} for
$Y^{(a)}$.  The change of variables $x=as$ gives the exact identity
\begin{align*}
 V_T^g(t)=\sqrt a\,F_{a/T}^{g,Y^{(a)}}(t).
\end{align*}
For $T\ge a$, the parameter $a/T$ belongs to $(0,1]$.
Applying \eqref{ns_intbd} on $[0,1]$ therefore yields
\begin{align}\label{cont_max}
 \sup_{T\ge a}\E\Big[ \|V_T^g\|_\infty^2 \Big]
 \le C_{a,d,K_a}\|g\|_2^2.
\end{align}
In particular, with $\varphi_M$ as in \eqref{herm_cut},
\begin{align}\label{cont_approx}
 \sup_{T\ge a}\E\Big[   \|V_T^\varphi-V_T^{\varphi_M}\|_\infty^2 \Big]
 \le C_{a,d,K_a}\|\varphi-\varphi_M\|_2^2
 \longrightarrow0.
\end{align}

For completeness, we verify the increment bound for finite
Hermite expansions.  For $0\le s<t\le1$, the same argument as
in \eqref{disc_trunc_l2} gives

\noi
\begin{align*}
 \E \Big[  |V_T^{\varphi_M}(t)-V_T^{\varphi_M}(s)|^2 \Big]
 \le\frac{\|\varphi_M\|_2^2}{T}
       \int_{Ts}^{Tt}\int_{Ts}^{Tt}
       |\rho_B(x-y)|^d\,dx\,dy
 \le J_d\|\varphi\|_2^2(t-s).
\end{align*}
Hypercontractivity on the finitely many chaoses, followed by the
Kolmogorov tightness criterion,
implies
tightness of $(V^{\varphi_M}_T)_{T\geq a}$ in $\mathbf{C}([0,1])$.  The
finite-dimensional continuous Breuer--Major theorem recalled
in the introduction identifies the limit:
\begin{align}\label{cont_poly}
 V_T^{\varphi_M}\LRA\s_M\cW
 \quad\text{in }\mathbf{C}([0,1]).
\end{align}
Here $\s_M^2$ is \eqref{c_var} truncated at order $M$, and
$ |\s_{\varphi,B}^2-\s_M^2|
 \le J_d\|\varphi-\varphi_M\|_2^2\longrightarrow0.$
The series in \eqref{c_var} is therefore absolutely convergent
and nonnegative, since each $\s_M^2$ is a limiting variance.
Billingsley's theorem \cite[Theorem~3.2]{Bil99}, applied to
\eqref{cont_approx} and \eqref{cont_poly}, completes the proof.
\end{proof}

\section{FCLT for self-similar Gaussian processes}
\label{selfsim}

We prove Corollary~\ref{ss_thm}, stated in
Section~\ref{ss_model}, together with its critical counterpart,
Theorem~\ref{ss_crit}.  The endpoint $p=2$ of
Theorem~\ref{ns_max} and its integral version,
Proposition~\ref{ns_int}, provide uniform path-norm approximation
for these nonstationary processes.
Lemma~\ref{ss_env} verifies the covariance row bound using the
estimates of \cite{CNN20}.  Uniform path-norm approximation by finite
Hermite expansions
then treats a general $\varphi\in L^2(\ga)$.  In the critical regime, the
leading
Hermite chaos gives the limiting variance and the maximal
inequality controls the higher-chaos remainder.

We first justify the definitions and the rescaling
\eqref{ss_inc}--\eqref{ss_law}.  By the reverse triangle inequality
and self-similarity,
\begin{align*}
 \|\Delta_\eps X(s)\|_2
 \ge
 \|X(1)\|_2\big((s+\eps)^\be-s^\be\big)>0,
\end{align*}
so every coordinate of $Y_\eps$ is well defined and standard normal.
Since $Y_\eps$ is jointly measurable and
$\varphi\in L^1(\ga)$, Fubini's theorem implies that
$s\mapsto\varphi(Y_\eps(s))$ is locally integrable almost surely;
hence the corresponding integral paths are continuous.

Self-similarity gives equality of the finite-dimensional
distributions of $\{Y_\eps(s):s\ge0\}$ and $\{Y(s/\eps):s\ge0\}$.
For bounded measurable $\varphi$, Fubini's theorem therefore gives
equality
of all joint moments of finite vectors of the corresponding time
integrals.  Since these vectors are bounded, their laws are determined
by their moments.  Approximating a general
$\varphi\in L^1(\ga)$ by bounded truncations extends the identity in
law to the present setting.  Finally, the almost sure continuity of
the integral paths yields \eqref{ss_law} as an identity of laws on
$\mathbf C([0,1])$.  The approximation thus covers every measurable
$\varphi\in L^1(\ga)$.

\subsection{Covariance estimates and the subcritical proof}
\label{ss_proof}

We now verify \eqref{ns_fib} for the normalized increments in
\eqref{ss_inc}.  The estimates away from the origin are sufficient. Recall that
the initial unit interval was treated separately in
Proposition~\ref{ns_int}.

\begin{lemma}[\textsf{Uniform covariance envelope}]\label{ss_env}
Under the assumptions  \textup{\bf(H.1)} and \textup{\bf(H.2)}  in  Section~\ref{ss_model}, write
$\Phi(s,t)=\E[Y(s)Y(t)]$ and set
\begin{align}\label{kappa_def}
 \kappa=
 \begin{cases}
  \min\{2-\al,\nu\},&0<\al<1,\\
  2-\al,&1\le\al<2.
 \end{cases}
\end{align}
Then,  there exists $C<\infty$ such that
\begin{align}\label{ss_envelope}
 |\Phi(s,t)|\le C(1+|t-s|)^{-\kappa},
 \quad s,t\ge1.
\end{align}
In particular, if $\kappa d>1$, then \eqref{ns_fib} holds with a constant
depending only on $X$ and $d$.  The condition $\al<2- \frac{1}{d}$ implies
$\kappa d>1$.
\end{lemma}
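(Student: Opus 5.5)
The proof splits into a near-diagonal regime, a far-diagonal regime handled by \textbf{(H.1)}--\textbf{(H.2)} through the estimates of \cite[Lemmas~4.1--4.3]{CNN20}, and a short summation that yields \eqref{ns_fib}. By symmetry of $\Phi$ we take $1\le s\le t$ and write $h=t-s$. Since $|\Phi(s,t)|\le1$, the bound \eqref{ss_envelope} is trivial for $h\le h_0$, for any fixed $h_0$. For large $h$ we write
\[
\Phi(s,t)=\frac{\E[\Delta_1X(s)\Delta_1X(t)]}{\|\Delta_1X(s)\|_2\,\|\Delta_1X(t)\|_2}.
\]
The numerator is a second-order difference of the covariance $s^{2\be}\phi(t/s)$, and we bound numerator and denominator separately.

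\textbf{Denominator.} We use the lower bound on increment variances from \cite{CNN20}, which follows from \textbf{(H.1)}:
\[
\|\Delta_1X(s)\|_2^2\ge c\,s^{2\be-\al}, \qquad s\ge1.
\]
This holds because locally the variance behaves like $2\ld s^{2\be}(1/s)^\al$.

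\textbf{Numerator.} We express the numerator as a double integral of the mixed derivative $\partial_s\partial_t\big(s^{2\be}\phi(t/s)\big)$ over $[s,s+1]\times[t,t+1]$. We then split according to whether $t/s\ge c$ or $1<t/s<c$, with $c$ as in \textbf{(H.2)}.
\begin{itemize}
\item[(a)] When $1<t/s<c$, we use \textbf{(H.1)}. The leading term $-\ld(x-1)^\al$ produces the fBm-type contribution $s^{2\be-\al}$ times a quantity of order $a_\al(h)$, and $a_\al(h)\sim h^{\al-2}$. The $\psi$ part is controlled through \eqref{ss_h1} and gives an error of order $s^{2\be-2}\cdot(\text{polynomial in }h/s)$. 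Since $h\le(c-1)s$ here, this error is at most $Cs^{2\be-\al}h^{\al-2}$ as well. Dividing by the denominator gives $|\Phi|\le Ch^{\al-2}$.
\item[(b)] When $t/s\ge c$, we apply \eqref{ss_h2} to the mixed derivative. For $\al<1$ this gives roughly $s^{2\be-2}(t/s)^{-\nu}$. Dividing by $s^{\be-\al/2}t^{\be-\al/2}$ and using $t\ge h$ should give at most $C h^{-\min\{\nu,\,2-\al\}}$; the $s$-powers must be tracked using $t\asymp h$ and $\be\le 1$. For $\al\ge1$ the analogous bound is $h^{\al-2}$.
\end{itemize}
This is where the case distinction in \eqref{kappa_def} enters.

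\textbf{Main obstacle.} I expect the delicate step to be the exponent bookkeeping in (b): showing that the leftover power of $s$ is nonpositive uniformly, so that the bound depends on $h$ alone. If the direct bound is lossy, I would instead quote the bounds on normalized increment covariances in \cite[Lemmas~4.1--4.3]{CNN20}, which are stated in precisely these regimes, and only assemble them into a single envelope.

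\textbf{Consequences.} Given \eqref{ss_envelope}, for each $u\in[0,1]$ and $i\ge0$,
\[
\sum_{j\ge0}\bigl|\Phi(1+u+i,\,1+u+j)\bigr|^d\le C^d\sum_{k\in\Z}(1+|k|)^{-\kappa d}<\infty
\]
whenever $\kappa d>1$, uniformly in $u$ and $i$. This is \eqref{ns_fib}.

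It remains to check that $\al<2-\frac1d$ implies $\kappa d>1$. We have $(2-\al)d>1$ directly. When $\al<1$ we also need $\nu d>1$, which holds because $\nu>1$.
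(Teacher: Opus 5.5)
Your outline follows the paper's proof. It has the same trivial range for small $h$, and the same normalization by $v(s)=\E[(\Delta_1X(s))^2]\asymp s^{2b}$ with $b=\be-\frac{\al}{2}$, from Lemma~4.1 of \cite{CNN20}. It then uses a comparable-time regime where $h\lesssim s$ turns every error term into $h^{\al-2}$, and a far regime giving $h^{-\min\{2-\al,\nu\}}$ (or $h^{\al-2}$ when $\al\ge1$). The fiber summation and the check of $\kappa d>1$ are also the paper's.

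Your optional rederivation of the covariance bounds of Lemmas~4.2--4.3 of \cite{CNN20} is sound for $h\ge2$; the paper simply cites them. It uses the mixed derivative $(2\be-1)x^{2\be-2}\phi'(y/x)-x^{2\be-3}y\,\phi''(y/x)$ integrated over $[s,s+1]\times[t,t+1]$. You should note that the leading term also carries an $O(s^{2b-1}h^{\al-1})$ correction from the variation of $x^{2b}$. It is harmless when $h\lesssim s$.

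Three points need repair.

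\textbf{The step you flag as the main obstacle.} It is not closed by $\be\le1$. The ingredient you need is $b\ge0$, i.e.\ $\al\le2\be$ from \textbf{(H.1)}.
For $\al<1$, the normalized far bound is $(s/t)^b\,s^{\al+\nu-2}t^{-\nu}$. Drop the factor $(s/t)^b\le1$ and split on the sign of $\al+\nu-2$:
if it is nonnegative, use $s\le t$ to get $t^{\al-2}$; if it is negative, use $s\ge1$ to get $t^{-\nu}$. Then use $t\ge h$.
This is exactly the paper's case analysis. Your fallback of citing \cite{CNN20} does not bypass it, because the far-time lemma there, as quoted in the paper, bounds the unnormalized covariance.

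\textbf{The threshold for the far regime.} \textbf{(H.2)} must hold on the whole rectangle, whose smallest ratio $y/x$ is $t/(s+1)$. So the far regime should be $t\ge c(s+1)$, i.e.\ $h\ge(c-1)s+c$, rather than $t/s\ge c$. The comparable regime then has $h\le(2c-1)s$, which your estimate in (a) still covers.

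\textbf{The lower bound on $v$.} Lemma~4.1 of \cite{CNN20} gives $v(s)\gtrsim s^{2b}$ only for large $s$. For $s$ in a compact range you also need $v>0$, for instance from $\|\Delta_1X(s)\|_2\ge\|X(1)\|_2\big((s+1)^\be-s^\be\big)>0$, together with continuity of $v$.
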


\begin{proof}
Put $b=\be-\frac{\al}{2}\ge0$ and
$v(s)=\E[(\Delta_1X(s))^2]$.
Under \textup{\bf(H.1)}, the increment-variance estimate of
\cite[Lemma~4.1]{CNN20} gives
\begin{align}\label{ss_v}
 \begin{aligned}
 &v(s)=2\ld s^{2b}(1+u_1(s)),
   \quad |u_1(s)|\le Cs^{-\tau},\quad s\ge1,\\
 &\tau=1-\al\ \text{if }\al<1,
   \quad \tau=2-\al\ \text{if }\al\ge1.
 \end{aligned}
\end{align}
The covariance assumptions ensure continuity of $v$ on $[1,\infty)$.
Its positivity, proved above, and \eqref{ss_v} imply
\begin{align}\label{ss_vcomp}
 C^{-1}s^{2b}\le v(s)\le Cs^{2b},\quad s\ge1.
\end{align}
Thus \eqref{ss_vcomp} controls the normalization factors.

By symmetry, take $t=s+h$, with $s\ge1$ and $h\ge0$.
The case where $0\le h\le1$ follows  immediately 
from $|\Phi|\le1$.

Now let us consider the range $h > 1$.
Let $c>1$ be as in {\bf(H.2)}. We further split the range into two cases. 
\begin{itemize}
\item
For $h\ge1$ with $h\le(c-1)s+c$, the comparable-time estimate in
\cite[Lemma~4.2]{CNN20} reads
\begin{align}\label{ss_u}
 \begin{aligned}
 &\E[\Delta_1X(s)\Delta_1X(s+h)]
   =\ld s^{2b}\big(2a_\al(h)+u_2(s,s+h)\big),\\
 &|u_2(s,s+h)|
   \le C\big(s^{-1}h^{\al-1}+s^{\al-2}\big).
 \end{aligned}
\end{align}
Since $|a_\al(h)|\le Ch^{\al-2}$ for $h\ge1$, division by
$\sqrt{v(s)v(s+h)}$ leads us to 

\noi
\begin{align}\label{ss_near}
 |\Phi(s,s+h)|
 \le C\left(\frac{s}{s+h}\right)^b
   \big(h^{\al-2}+s^{-1}h^{\al-1}+s^{\al-2}\big)
 \le Ch^{\al-2}.
\end{align}
For the last inequality, we used $h\le(2c-1)s$, $b\ge0$, and $\al<2$.

\item  For $h\ge(c-1)s+c$, \cite[Lemma~4.3]{CNN20} gives the following
bounds:

\noi
\begin{align}\label{ss_farraw}
 |\E[\Delta_1X(s)\Delta_1X(s+h)]|
 \le C
 \begin{cases}
  s^{2\be+\nu-2}h^{-\nu},&\al<1,\\
  s^{2\be-\al}h^{\al-2},&\al\ge1.
 \end{cases}
\end{align}
If $\al\ge1$, \eqref{ss_vcomp} gives $|\Phi(s,s+h)|\le Ch^{\al-2}$.
If $\al<1$, it gives instead
\begin{align}\label{ss_far}
 |\Phi(s,s+h)|
 \le C\left(\frac{s}{s+h}\right)^b
          s^{\al+\nu-2}h^{-\nu}.
\end{align}
Since $b\ge0$, we have
$(s/(s+h))^b\le1$.
If $\al+\nu-2\ge0$, the   condition
$h\ge(c-1)s+c$, with $c>1$, implies
$s\le h/(c-1)$.  Raising this inequality to the
nonnegative power $\al+\nu-2$ and using
\eqref{ss_far}, we obtain
\begin{align*}
 |\Phi(s,s+h)|
 \le C s^{\al+\nu-2}h^{-\nu}
 &\le C(c-1)^{-(\al+\nu-2)}
        h^{\al+\nu-2}h^{-\nu}\\
 &\le C h^{\al-2},
\end{align*}
where the fixed factor involving $c$ is absorbed
into the constant.
If $\al+\nu-2<0$, we instead use $s\ge1$,
which gives $s^{\al+\nu-2}\le1$.  Thus
\eqref{ss_far} yields
$ |\Phi(s,s+h)|\le C h^{-\nu}.$
The first case corresponds to $\nu\ge2-\al$,
and the second to $\nu<2-\al$.
Hence, for $\al<1$, both cases give us
$
 |\Phi(s,s+h)|
 \le C h^{-\min\{2-\al,\nu\}}.
$
Together with the preceding bound for $\al\ge1$
and  \eqref{ss_near}, 
we obtain the bound   \eqref{ss_envelope} for $h\geq 1$.
\end{itemize}
Combining this with the case $h\leq 1$, we have  \eqref{ss_envelope}
for all $s,t\geq 1$.

For the lattice fiber at $u$, we can deduce from  \eqref{ss_envelope} 
that

\noi
\begin{align*}
 \sup_{i\ge0}\sum_{j\ge0}
 |\Phi(1+u+i,1+u+j)|^d
 \le C\sum_{k\in\Z}(1+|k|)^{-\kappa d},
\end{align*}
uniformly in $u\in[0,1]$.  The series is finite when $\kappa d>1$.
If $\al<1$, both $2-\al$ and $\nu$ exceed one.  If $\al\ge1$, the
condition $\kappa d>1$ is exactly $\al<2- \frac{1}{d}$.
This completes the proof.
\end{proof}

\begin{proof}[Proof of Corollary~\ref{ss_thm}]
By \eqref{ss_law}, it is enough to consider $F_\eps^\varphi$.
For $M\ge d$, let $\varphi_M=\sum_{q=d}^M c_qH_q$.
The finite-dimensional theorem in
\cite[Theorem~1.2\textup{(1)}]{CNN20} applies to $\varphi_M$.
We briefly verify tightness for these finite expansions to identify
precisely the input used in the approximation.

We first remove the initial unit interval by setting
\begin{align*}
 U_\eps^M(t)=\sqrt\eps
 \int_1^{\max\{1,\frac{t}{\eps}\}}\varphi_M(Y(s))\,ds.
\end{align*}
Its $L^2$ uniform distance from $F_\eps^{\varphi_M}$ is at most
$\sqrt\eps\,\|\varphi_M\|_2$, by \eqref{cell_l2}.
For $s,t\ge1$, since the Hermite rank of $\varphi_M$ is at least $d$,
we have 

\noi
\begin{align*}
 |\E[\varphi_M(Y(s))\varphi_M(Y(t))]|
 \le\|\varphi_M\|_2^2|\Phi(s,t)|^d.
\end{align*}
Since $\kappa d>1$, Lemma~\ref{ss_env} makes the right-hand side
integrable in $t$ uniformly in $s$.  Thus
$\| U_\eps^M(t)-U_\eps^M(s)\|^2_2\le C|t-s|$ on $[0,1]$.
Each increment belongs to the sum of Gaussian chaoses of orders
$d,\ldots,M$. 
Therefore, by  hypercontractivity
\cite[Theorem 2.8.12]{NP12}, 
we have for any fixed $p>2$:

\noi
\begin{align*}
 \|U_\eps^M(t)-U_\eps^M(s)\|_p
 \le C_{p,M}\sqrt{|t-s|}.
\end{align*}
The Kolmogorov tightness criterion applies since $U_\eps^M(0)=0$.
Combining this with the finite-dimensional limit yields
\begin{align}\label{ss_poly}
 F_\eps^{\varphi_M}\LRA\s_{\varphi_M,\al}\cW
 \quad\text{in }\mathbf{C}([0,1]).
\end{align}

The difference $\varphi-\varphi_M$ has vanishing Hermite coefficients
below order $d$.  Then, it follows from Lemma~\ref{ss_env} and Proposition~\ref{ns_int} 
that

\noi
\begin{align}\label{ss_approx}
 \sup_{0<\eps\le1}
 \E \big[ \|F_\eps^\varphi-F_\eps^{\varphi_M}\|_\infty^2 \big]
 \le C_{X,d}\|\varphi-\varphi_M\|_2^2\longrightarrow0.
\end{align}
The constant does not depend on $M$.
Finally, $|a_\al|\le1$ and $a_\al\in L^d(\R)$ imply
\begin{align*}
 |\s_{\varphi,\al}^2-\s_{\varphi_M,\al}^2|
 \le\|\varphi-\varphi_M\|_2^2
       \int_\R|a_\al(h)|^d\,dh\longrightarrow0.
\end{align*}

\noi
Thus, the desired functional CLT \eqref{ss_fclt}
follows from Billingsley's theorem  (\cite[Theorem~3.2]{Bil99})
together with
\eqref{ss_poly} and \eqref{ss_approx}.
\qedhere

\end{proof}

\subsection{The critical logarithmic normalization}
\label{ss_boundary}

At $\al=2- \frac{1}{d}$, the leading Hermite chaos is critical, whereas
every
higher chaos remains subcritical.  We first compute the critical
covariance directly.  

\begin{lemma}[\textsf{Critical covariance comparison}]\label{ss_critcov}
Under the assumptions of Section~\ref{ss_model}, suppose that
$\al=2- \frac{1}{d}$.  For $L\ge2$,
\begin{align}\label{crit_compare}
 \int_0^L\int_0^L
 |\Phi(s,t)^d-a_\al(t-s)^d|\,ds\,dt\le CL.
\end{align}
Consequently, for $u,v\ge0$,
\begin{align}\label{crit_covlim}
 \frac{1}{L\log L}\int_0^{Lu}\int_0^{Lv}\Phi(s,t)^d\,ds\,dt
 \longrightarrow
 2A_d^d(u\wedge v),
\end{align}
where $A_d=\frac{(2d-1)(d-1)}{2d^2}$.
When $d=1$, the right-hand side is zero.
\end{lemma}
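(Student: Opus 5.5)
The plan is to compare the normalized increment covariance $\Phi(s,t)$ with the stationary kernel $a_\al(t-s)$ region by region, using the expansions from \cite[Lemmas~4.1--4.3]{CNN20} already quoted in the proof of Lemma~\ref{ss_env}. The pointwise tool is the elementary inequality
\[
|x^d-y^d|\le d\,|x-y|\max\{|x|,|y|\}^{d-1}.
\]
A key observation is that at criticality $(2-\al)d=1$. Since $\al=2-\frac1d\ge1$, we are always in the case $\al\ge1$, so $\kappa=2-\al$ and Lemma~\ref{ss_env} gives $|\Phi(s,s+h)|^d\le C(1+h)^{-1}$. Likewise $|a_\al(h)|^d\le C(1+h)^{-1}$. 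Thus each of the two integrands in \eqref{crit_compare} is only logarithmically divergent on its own, and the aim is to show that their difference is $O(L)$.

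\textbf{Splitting the domain.} By symmetry I restrict to $t=s+h$ with $h\ge0$, and I split $[0,L]^2$ into three regions.
\begin{itemize}
\item[(a)] The strip where $s<1$ or $h\le1$. Its area is $O(L)$ and the integrand is at most $2$, so it contributes $O(L)$.
\item[(b)] The far region $h\ge(c-1)s+c$. Here I bound the difference crudely by $|\Phi|^d+|a_\al|^d\le Ch^{-1}$. Integrating in $h$ from $\asymp s$ to $L$ gives $C\log(L/s)$, and $\int_1^L\log(L/s)\,ds\le L$.
\item[(c)] The comparable region $1\le h\le(c-1)s+c$ with $s\ge1$.
\end{itemize}

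\textbf{The comparable region.} Dividing \eqref{ss_u} by $\sqrt{v(s)v(s+h)}$ and using \eqref{ss_v} gives
\[
\Phi(s,s+h)=\Big(\frac{s}{s+h}\Big)^{b}\,\frac{a_\al(h)+\tfrac12u_2}{\sqrt{(1+u_1(s))(1+u_1(s+h))}}.
\]
From this,
\[
|\Phi-a_\al(h)|\le C\Big(|a_\al(h)|\big(\tfrac hs+s^{-\tau}\big)+s^{-1}h^{\al-1}+s^{\al-2}\Big),
\]
using $1-(s/(s+h))^b\le Cb\,h/s$. I multiply by $\max\{|\Phi|,|a_\al|\}^{d-1}\le Ch^{-(d-1)/d}$, integrate $h$ over $[1,Cs]$, and then integrate $s$ over $[1,L]$. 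The resulting terms behave as follows.
\begin{itemize}
\item The term $h^{\al-1-(d-1)/d}s^{-1}=s^{-1}$, which comes from both $|a_\al|h/s$ and $s^{-1}h^{\al-1}$, gives $O(1)$ after the $h$-integral.
\item The term $s^{\al-2}h^{-(d-1)/d}=s^{-1/d}h^{-(d-1)/d}$ integrates to $O(1)$ in $h$.
\item The term $s^{-\tau}h^{-1}$ gives $O(s^{-\tau}\log s)=O(1)$.
\end{itemize}
Each term therefore contributes $O(L)$ after the $s$-integral, which proves \eqref{crit_compare}.

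\textbf{The limit \eqref{crit_covlim}.} By \eqref{crit_compare} and monotonicity of the domain, it suffices to replace $\Phi^d$ by $a_\al(t-s)^d$. The error is $O(L)$, which is $o(L\log L)$. Expanding $a_\al$ by Taylor's formula gives $a_\al(h)=\frac{\al(\al-1)}2h^{\al-2}(1+O(h^{-2}))$, and $\frac{\al(\al-1)}{2}=A_d$ at $\al=2-\frac1d$. Hence $a_\al(h)^d=A_d^dh^{-1}+O(h^{-3})$ for $h\ge1$. Take $u\le v$. For $s\in[0,Lu]$, the $t$-integral over $[0,Lv]$ equals $A_d^d\big(\log s+\log(Lv-s)\big)+O(1)$ away from the ends. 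Integrating over $s\in[0,Lu]$ gives $2A_d^d\,Lu\log L+O(L)$, with the boundary layers near $s=0$ and $s=Lu=Lv$ costing only $O(L)$. When $d=1$ we have $\al=1$, so $a_1\equiv0$ and $A_1=0$, consistent with the claim.

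\textbf{Main obstacle.} I expect the main difficulty to be the comparable region (c). There I must check that the factor $(s/(s+h))^b$ and the normalization errors produce only the $h/s$ and $s^{-\tau}$ corrections claimed above, and that the exponent cancellation $(\al-1)-(d-1)/d=0$ is exact, so that no extra logarithm appears. If the $h/s$ bound on the $b$-factor fails for $h$ comparable to $s$, I would move that sub-range into the crude bound used for region (b). That bound costs $\int_1^L\int_{s}^{Cs}h^{-1}\,dh\,ds=O(L)$, which is still acceptable.
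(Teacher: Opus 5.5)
Your proposal is correct and is essentially the paper's argument: the same split into an $O(L)$ strip, a comparable-time region handled through \eqref{ss_v}--\eqref{ss_u}, the bound $|x^d-y^d|\le d|x-y|\max\{|x|,|y|\}^{d-1}$ with the exact cancellation coming from $(2-\alpha)d=1$, and a far region bounded crudely by $Ch^{-1}$, followed by $a_\alpha(h)^d=A_d^d h^{-1}+O(h^{-3})$. The differences are cosmetic: the paper evaluates the rectangle via $J(x)$ and the even-kernel identity rather than directly, and splits at $h=c_0s$ rather than $h=(c-1)s+c$. Two small corrections, neither affecting validity: $a_1(h)=(1-|h|)_+$ is not identically zero, but it vanishes for $|h|\ge1$, so your general computation still gives the zero limit when $d=1$; and the fallback in your last paragraph is unnecessary, since $1-(1+x)^{-b}\le bx$ holds for all $x\ge0$ when $b\ge0$.
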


\begin{proof}
Set $\kappa=2-\al=1/d$ and $b=\be- \frac{\al}{2}$.
By symmetry, it suffices to consider $0\le s\le t\le L$.
The regions where $s<1$ or $t-s<1$ contribute at most
$O(L)$.
We therefore restrict to $s\ge1$ and $h=t-s\ge1$.

Let $c_0=\min\{1,c-1\}>0$
with  $c > 1$ as in \textup{\bf(H.2)}.
First suppose that $1\le h\le c_0s$.
From  \eqref{ss_u} 
and \eqref{ss_v}, we can write
\begin{align*}
 \Phi(s,s+h)
 &=\frac{\E[\Delta_1X(s)\Delta_1X(s+h)]}
         {\sqrt{v(s)v(s+h)}}
         =D(s,h)\Big(a_\al(h)+\frac{u_2(s,s+h)}2\Big),
\end{align*}
where
\begin{align*}
 D(s,h)
 =\Big(\frac{s}{s+h}\Big)^b
  \big((1+u_1(s))(1+u_1(s+h))\big)^{-1/2}.
\end{align*}
By \eqref{ss_vcomp}, $1+u_1(t)$ is uniformly bounded
above and away from zero for $t\ge1$.
Thus $D$ is bounded.
Write $D(s,h)=A(s,h)B(s,h)$, where
\begin{align*}
 A(s,h)=(1+h/s)^{-b}
 \AND
 B(s,h)=\big((1+u_1(s))(1+u_1(s+h))\big)^{-1/2}.
\end{align*}

\noi
Since $0\le h/s\le1$, the mean value theorem applied
to $x\mapsto(1+x)^{-b}$ gives
$|A(s,h)-1|\le Ch/s$.
Moreover, $1+u_1(t)$ is uniformly bounded above
and away from zero by \eqref{ss_vcomp}.
Applying the mean value theorem to
$(x,y)\mapsto(xy)^{-1/2}$ therefore yields
\begin{align*}
 |B(s,h)-1|
 \le C\big(|u_1(s)|+|u_1(s+h)|\big).
\end{align*}
Using $AB-1=(A-1)B+(B-1)$, the boundedness of $B$,
and $|u_1(t)|\le Ct^{-\kappa}$ from \eqref{ss_v},
we obtain
\begin{align*}
 |D(s,h)-1|
 &\le C\Big(\frac hs+|u_1(s)|+|u_1(s+h)|\Big)
 \le C\Big(\frac hs+s^{-\kappa}\Big).
\end{align*}
Moreover, substituting $\al=2-\kappa$ into
\eqref{ss_u} gives
\begin{align*}
 |u_2(s,s+h)|
 &\le C\left(s^{-1}h^{1-\kappa}+s^{-\kappa}\right)
 =Ch^{-\kappa}
   \Big(\frac hs+\Big(\frac hs\Big)^\kappa\Big).
\end{align*}
Since $|a_\al(h)|\le Ch^{-\kappa}$ and
$h/s\le1$, we obtain, using $0<\kappa\le1$,
\begin{align}\label{kap1}
 |\Phi(s,s+h)-a_\al(h)|
 \le Ch^{-\kappa}
 \Big(\Big(\frac hs\Big)^\kappa+s^{-\kappa}\Big).
\end{align}
The same estimates give
\begin{align}\label{kap2}
|\Phi(s,s+h)|+|a_\al(h)|\le Ch^{-\kappa}.
\end{align}
Then, using  $\kappa d=1$
and
$|x^d-y^d|\le d|x-y|(|x|+|y|)^{d-1}$,
we can deduce from \eqref{kap1}--\eqref{kap2} that

\noi
\begin{align}\label{crit_local}
 |\Phi(s,s+h)^d-a_\al(h)^d|
 \le Ch^{-1}
 \Big(\Big(\frac hs \Big)^\kappa+s^{-\kappa}\Big).
\end{align}

\noi
Since $c_0\le1$, 
we have
$\min\{c_0s,L-s\} \leq s$
and thus by  enlarging  the $h$-integration range to $[1,s]$,
we can bound the contribution of this region by

\noi 
\begin{align*}
 C\int_1^L\left(
 s^{-\kappa}\int_1^s h^{\kappa-1}\,dh
 +s^{-\kappa}\log s\right)\,ds
 \le CL.
\end{align*}

It remains to consider $s\ge1$, $h\ge1$, and
$h>c_0s$.
First note that

\noi
\begin{align*}
 |\Phi(s,s+h)^d-a_\al(h)^d|
 &\le |\Phi(s,s+h)|^d+|a_\al(h)|^d\\
 &\le Ch^{-\kappa d}=Ch^{-1}.
\end{align*}

\noi
For each fixed $h$, the allowed $s$-interval has
length at most $h/c_0$.  Hence this region contributes
at most $O(L)$.
Combining these estimates yields
 \eqref{crit_compare}.

Write
$J(x)=\int_0^x\int_0^x a_\al(t-s)^d\,ds\,dt$.
Here and below, $f(x)\sim g(x)$ means
$f(x)/g(x)\to1$ as $x\to\infty$.
For $d\ge2$, we have $\al=2- \frac1 d>1$.
 Taylor expansion, with cancellation of the odd-order terms, 
 allows us to obatin
 
 \noi
\begin{align*}
 a_\al(h)
 &=\frac{\al(\al-1)}2h^{\al-2}
   +O(h^{\al-4})
 \sim A_dh^{-1/d},
\end{align*}
where $A_d=\al(\al-1)/2 = \frac{(2d-1)(d-1)}{2d^2}>0$.
Consequently,
$a_\al(h)^d\sim A_d^d/h$ as $h\to\infty$.
As a result, we have the following asymptotic 

\noi
\begin{align}\label{crit_stat}
 J(x)=2\int_0^x(x-h)a_\al(h)^d\,dh
 \sim2A_d^d x\log x.
\end{align}

For $d=1$, $\al=1$ and $a_1(h)=(1-|h|)_+$, so $J(x)=O(x)$.
For an even translation kernel, the rectangular integral equals
\begin{align*}
 \int_0^{Lu}\int_0^{Lv}a_\al(t-s)^d\,ds\,dt
 =\frac{1}{2}\big(J(Lu)+J(Lv)-J(L|u-v|)\big).
\end{align*}
Dividing by $L\log L$ and using \eqref{crit_stat} gives the limit in
\eqref{crit_covlim} for $a_\al$.
The absolute error on this rectangle is $O(L)$ by
\eqref{crit_compare} on the containing square.
This completes the proof.
\qedhere

\end{proof}

\begin{proof}[Proof of Theorem~\ref{ss_crit}]
We first prove the fixed-chaos limit, with its variance given by
Lemma~\ref{ss_critcov}, and then remove the higher chaoses using
Proposition~\ref{ns_int}.  Put $L= 1/\eps \ge e$ and define
\begin{align*}
 U_L(t)=\frac{1}{\sqrt{L\log L}}
 \int_1^{\max\{1,Lt\}}H_d(Y(s))\,ds.
\end{align*}

\noi
Note that
removing $[0,1]$ changes the path supremum in $L^2$ by at most
$\frac{\|H_d\|_2}{\sqrt{L\log L}}$, by \eqref{cell_l2}.
Now let us  represent $Y(s)=\mathcal{X}(e_s)$ with $\mathcal{X}$
isonormal over a separable Hilbert space $\mathfrak{H}$,
where
$\|e_s\|_{\mathfrak{H}}=1$ and 
$\langle e_s,e_t\rangle_{\mathfrak{H}}=\Phi(s,t)$.
Let $I_d$ denote the $d$th multiple  integral operator
such that
$\E[I_d(f)I_d(g)]=d!\langle f,g\rangle_{\mathfrak{H}^{\otimes d}}$
for any symmetric  $f, g\in\mathfrak{H}^{\otimes d}$, and
let  $\otimes_r$ denote
the $r$th contraction;
see \cite[Section~2]{CNN20}
and \cite{NP12} for any unexplained notation. 
Then $U_L(t)=I_d(k_{L,t})$, with\footnote{The continuity of $\Phi$ on $[1,\infty)^2$ implies
that $s\mapsto e_s\in\mathfrak H$ is continuous,
since
 $\|e_s-e_t\|_{\mathfrak H}^2
 =2\big(1-\Phi(s,t)\big) \to 0$
as $t\to s$.
We may take $\mathfrak H$ to be the closed linear
span of $\{e_s:s\ge1\}$.
By continuity, the vectors with rational indices
have the same closed linear span, so $\mathfrak H$
is separable.
Moreover, since $\|e_s\|_{\mathfrak H}=1$,
a telescoping expansion of the tensor product gives
 $\|e_s^{\otimes d}-e_t^{\otimes d}\|
 \le d\|e_s-e_t\|_{\mathfrak H}.$
Thus $s\mapsto e_s^{\otimes d}$ is continuous
and has norm one, so its integral over any compact
interval exists as a Bochner integral in
$\mathfrak H^{\odot d}$.
The map $I_d:\mathfrak H^{\odot d}\to L^2(\Omega)$
is bounded and linear, and therefore commutes
with this integral.
Together with
$I_d(e_s^{\otimes d})=H_d(Y(s))$,
this justifies $U_L(t)=I_d(k_{L,t})$.}
\begin{align*}
 k_{L,t}=\frac{1}{\sqrt{L\log L}}
 \int_1^{\max\{1,Lt\}}e_s^{\otimes d}\,ds.
\end{align*}

\noi
By \eqref{OL_pq}, we have 
$ \E[H_d(Y(r))H_d(Y(q))]
 =d!\,\Phi(r,q)^d.$
Then,  for $s,t\in[0,1]$,
\begin{align} \label{OL_app2}
\begin{aligned} 
 \E[U_L(s)U_L(t)]
 &=\frac{d!}{L\log L}
   \int_1^{\max\{1,Ls\}}
   \int_1^{\max\{1,Lt\}}
   \Phi(r,q)^d\,dq\,dr\\
 &=\frac{d!}{L\log L}
   \int_0^{Ls}\int_0^{Lt}
   \Phi(r,q)^d\,dq\,dr
   +O\big((\log L)^{-1}\big)
\end{aligned}
\end{align}
as $L\to+\infty$.
Thus, we can deduce from    \eqref{crit_covlim}
that  $ \E[U_L(s)U_L(t)] \to  2d!A_d^d(s\wedge t).$

For $d\ge2$, we verify the contraction condition for CLT on  a  fixed chaos
\cite{NP05} 
and its multivariate extension in  \cite{PT05},
in the form recalled in \cite[Theorem~2.1]{CNN20}.
Fix $r\in\{1,\ldots,d-1\}$ and write $a=r/d \in(0,1)$.
In the convolution estimates below, function norms are Lebesgue norms
on $\R$, while tensor norms are taken in the relevant tensor powers of $\mathfrak{H}$.
Let $M=L+2$ and define the even kernels
\begin{align*}
 f_M(x)&=(1+|x|)^{-a}\ind_{\{|x|\le M\}}
 \AND
 g_M(x)=(1+|x|)^{-(1-a)}\ind_{\{|x|\le M\}}.
\end{align*}
Write $\mathbf x=(x_1,x_2,x_3,x_4)$ and set
\begin{align*}
 J_M=\int_{[1,M]^4}\prod_{i=1}^2
 f_M(x_{2i-1}-x_{2i})g_M(x_i-x_{i+2})\,d\mathbf x.
\end{align*}
The envelope \eqref{ss_envelope} implies, uniformly for $t\in[0,1]$,
\begin{align}\label{crit_ctr}
 \|k_{L,t}\otimes_r k_{L,t}\|^2
 \le\frac{CJ_M}{L^2(\log L)^2}
 \le\frac{CM\|f_M*g_M\|_2^2}{L^2(\log L)^2}.
\end{align}
For the last inequality, fix $x_1$ and extend the other three integrals
to $\R$; the two convolutions give $\|f_M*g_M\|_2^2$.
Choose $p=\frac{2}{1+a}$ and $q=\frac{2}{2-a}$, so $ \frac{1}{p}+
\frac{1}{q}=\frac{3}{2}$.
Direct integration and Young's inequality yield
\begin{align*}
 \|f_M\|_p\le C M^{\frac{1-a}{2}},
 \quad\|g_M\|_q\le C M^{\frac{a}{2}},
 \AND
 \|f_M*g_M\|_2^2\le CM.
\end{align*}
Thus the squared norm of every contraction in \eqref{crit_ctr} is
$O((\log L)^{-2})$ and tends to zero.
Hence, by the fourth moment theorems \cite[Theorem~2.1]{CNN20},
 together with the covariance limit,
 we obtain the finite-dimensional convergences. 

When $d=1$, the vector is already Gaussian and its covariance tends to
zero, so the same conclusion holds without a contraction argument.

For tightness, let $0\le s\le t\le1$.
The integration interval for $U_L(t)-U_L(s)$ has length
$\ell\le L(t-s)$.  The envelope \eqref{ss_envelope} with $\kappa d=1$
and an application of \eqref{OL_pq} as in \eqref{OL_app2}
 imply 
\begin{align*}
 \E\big[ |U_L(t)-U_L(s)|^2 \big]
 \le\frac{C\ell\log(e+\ell)}{L\log L}
 \le C(t-s).
\end{align*}
Then, by
hypercontractivity applied on the   $d$th chaos,
we get the corresponding
$L^p$ bound by  $C_{p,d}|t-s|^{\frac{1}{2}}$ for every $p>2$.
  Consequently, it follows from the Kolmogorov tightness criterion 
  that 
\begin{align*}
 \frac{F_{1/L}^{H_d}}{\sqrt{\log L}}
 \LRA\sqrt{2d!A_d^d}\,\cW
 \quad\text{in }\mathbf{C}([0,1]).
\end{align*}

Finally, write $\varphi=c_dH_d+r_d$.  The remainder $r_d$ has Hermite rank at least $d+1$.  Since
$\kappa (d+1)=\frac{d+1}{d}>1$, Lemma~\ref{ss_env} supplies the fiber
row-sum bound at rank threshold $d+1$.  Proposition~\ref{ns_int} then gives
\begin{align}\label{crit_rem}
 \E\bigg( \left\|
 \frac{F_\eps^{r_d}}{\sqrt{\log(1/\eps)}}
 \right\|_\infty^2 \bigg)
 \le\frac{C_{X,d}\|r_d\|_2^2}{\log(1/\eps)}
 \xrightarrow{\eps\to 0}0.
\end{align}
Combining the leading-chaos limit with \eqref{crit_rem}, and then using
\eqref{ss_law}, proves the result.
\end{proof}

\begin{remark}[\textsf{Normalization of the critical variance}] \label{rem_var63}
\rm
The factor two in \eqref{crit_var} comes from the two symmetric
triangles in the covariance integral \eqref{crit_stat}.
The resulting constant differs from the one printed in
\cite[equation~(1.24), p.~153]{CNN20}; our normalization follows
directly from Lemma~\ref{ss_critcov} and agrees with the stationary
covariance reduction in the proof of
\cite[Lemma~6.3, p.~175]{CNN20}.
The coefficient is independent of $\be$ and $\ld$, since these
parameters disappear from the leading normalized increment
correlation.
\end{remark}

\appendix
\section{Auxiliary Gaussian estimates}
\label{app:aux}

\begin{proof}[Proof of Lemma~\ref{rearr_lem}]
We first prove part~\textup{(i)}.  The case $q=0$ is immediate, so
assume $q\ge1$.  Since
$
 |h_q(x)|\le C_q(1+|x|^q),
$
it is enough to control $\int_A |x|^{2q}\,\ga(dx)$.  Put
$p=\ga(A)\le\frac{1}{2}$.  If $p=0$, there is nothing to prove;
hence assume $p>0$.  By Fubini and the Gaussian tail
bound,
we have

\noi
\begin{align*}
 \int_A |x|^{2q}\,\ga(dx)
  = \int_\R  \bigg( \int_0^{|x|} 2q  r^{2q-1} dr \bigg) \ind_A(x) \ga(dx)
&= 2q\int_0^\infty r^{2q-1}
 \ga\big(A\cap\{|x|>r\}\big)\,dr\\
 &\le
 2q\int_0^\infty r^{2q-1}
 \min\{p,2e^{-\frac{r^2}{2}}\}\,dr.
\end{align*}
Set $R=\sqrt{2\log(\frac{2}{p})}$,
so that $2e^{-\frac{R^2}{2}}=p$.  Splitting the preceding integral at $R$
gives
\begin{align*}
 \int_A |x|^{2q}\,\ga(dx)
 &\le
 2qp\int_0^R r^{2q-1}\,dr
 +4q\int_R^\infty r^{2q-1}e^{-\frac{r^2}{2}}\,dr\\
 &=pR^{2q}
 +2^{q+1} q   \int_{\frac{R^2}{2}}^\infty u^{q-1}e^{-u}\,du
 \quad\text{(with $u=\frac{r^2}{2}$)} \\
 &\leq pR^{2q} +   C_q e^{-\frac{R^2}{2}}(1+R^{2q-2}),
\end{align*}
where $C_q$ depends only on $q$ and may vary from line to line.
Since $2e^{-\frac{R^2}{2}}=p$, we obtain
\begin{align*}
 \int_A |x|^{2q}\,\ga(dx)
 \le C_q p(1+R^{2q})
 \le C_q p\bigl(1+\log( 1/p)\bigr)^q.
\end{align*}
Together with $h_q(x)^2\le C_q(1+|x|^{2q})$, this proves
\eqref{set_rearr}.

\medskip

We next prove part~\textup{(ii)}.  Again, the case $q=0$ is
immediate.  For $q\ge1$, using
$|h_q(x)|\le C_q(1+|x|^q)$, we have
$ \E[U|h_q(\cZ)|] \leq C_q\E[U]  +C_q\E[U|\cZ|^q].$
For any $R>0$, the assumptions $0\le U\le M$ and $\E[U]\le C_0$
yield
\begin{align*}
 \E[U|\cZ|^q]
 \le
 R^q\E[U]
 +M\E\big[|\cZ|^q\ind_{\{|\cZ|>R\}}\big]
 &\le
 C_0R^q
 +M\|\cZ^q\|_2
   \PP(|\cZ|>R)^{\frac{1}{2}}\\
 &\le
  C_0R^q+C_qM e^{-\frac{R^2}{4}}.
\end{align*}
Take
 $R=2\sqrt{\log M}$,
which is well defined for $M\ge2$.
Then,
 $M e^{-\frac{R^2}{4}}=1$
and thus,
 \eqref{wt_rearr} follows immediately.
\end{proof}

\end{document}